\numberwithin{equation}{section} 
\definecolor{myred}{rgb}{0.75,0,0}
\definecolor{mygreen}{rgb}{0,0.5,0}
\definecolor{myblue}{rgb}{0,0,0.65}
\theoremstyle{plain}
\newtheorem{theorem}{Th\'eor\`eme}[section]
\newtheorem{lemma}[theorem]{Lemme}
\newtheorem{rem}[theorem]{Remarque}
\newtheorem*{lemma*}{Lemme}
\newtheorem*{proposition*}{Proposition}
\newtheorem{conjecture}[theorem]{Conjecture}
\newtheorem*{truefact*}{Fact}
\theoremstyle{definition}
\theoremstyle{remark}
\newcommand{\bb}[1]{\expandafter\newcommand\expandafter{\csname #1\endcsname}{{\mathbb {#1}}}} 
\newcommand{\e}{\mathrm{e}}
\newcommand{\bfm}{\mathbf m}
\renewcommand{\leq}{\leqslant}
\renewcommand{\geq}{\geqslant}
\renewcommand{\phi}{\varphi}
\renewcommand{\d}{{\rm d}}
\title{R\'epartition conjointe  de trois nombres premiers et applications}
\author{R\'egis de la Bret\`eche }
\begin{document} 
\maketitle  

\bigskip
 {\hfill {\`A la m\'emoire de Nicolas Bergeron}}
\bigskip
 \renewcommand{\abstractname}{Abstract}
\begin{abstract}  
We prove asymptotic results for the singular series associated to the distribution of three primes. Assuming a quantitative version of Hardy and Littlewood's conjecture on prime 3-tuples, we deduce an asymptotic formula related to the joint  distribution of three primes.
This improves recent  results  of  Kuperberg and completes  results by Montgomery and Soundararajan. Following the Montgomery and Soundararajan  approach, we derive conjectural applications to the distribution of  primes in short intervals.
\end{abstract}

\section{Introduction}  

La conjecture des $k$-uplets de nombres premiers par Hardy et Littlewood \cite{HL23} s'\'ecrit$$\qquad\qquad\qquad\qquad\sum_{1\leq n\leq x} \prod_{j=1}^k \Lambda(n+d_j)=(\mathfrak S(\mathcal D)+o(1))x \qquad\qquad\qquad\qquad {(x\to+\infty)}$$
avec $\mathcal D:=\{ d_1,\ldots , d_k\}$ un ensemble de $k$ entiers distincts et $\mathfrak S(\mathcal D)$ la s\'erie singuli\`ere d\'efinie par $$
\mathfrak S(\mathcal D):=\prod_{p}\Big(1-\frac 1p\Big)^{-k} \Big( 1-\frac{\nu_p(\mathcal D)}{p}\Big),
$$
avec $\nu_p(\mathcal D)$ est le nombre de classes de r\'esidu modulo $p$ parmi les \'el\'ements de $\mathcal D$. Ce produit eul\'erien est bien convergent puisque pour les grandes valeurs de nombre premier $p$ nous avons~$\nu_p(\mathcal D)=k$. De plus, $\mathfrak S(\mathcal D)$ est nul s'il existe un nombre premier $p\leq k$ tel que~$\nu_p(\mathcal D)=p$. 
Nous renvoyons \`a \cite{BFT23} pour la pertinence de ce mod\`ele et les implications de cette conjecture.

D'apr\`es \cite{MS04}*{eq. (2)}, la s\'erie singuli\`ere v\'erifie aussi la relation $$
\mathfrak S(\mathcal D)=\sum_{\substack{q_1,\ldots,q_k\\ q_j\geq 1}}\Big( \prod_{j=1}^k\frac{\mu (q_j)}{\phi(q_j)}\Big)
\sum_{\substack{a_1,\ldots,a_k\\ 1\leq a_j\leq q_j \\ (a_j,q_j)=1\\ \sum_j a_j/q_j\in \mathbb Z}}\e\Big( \sum_{j=1}^k\frac{a_j d_j}{q_j}\Big),
$$
avec $\e (t):=\e^{2\pi i t}$.
Nous notons $\Lambda_0 =\Lambda -1$ la fonction arithm\'etique de moyenne asymptotiquement nulle construite \`a partir de la fonction de von Mangoldt $\Lambda$. La conjecture de Hardy et Littlewood s'\'ecrit alors
$$\qquad\qquad\qquad\qquad  \sum_{1\leq n\leq x} \prod_{j=1}^k \Lambda_0(n+d_j)=(\mathfrak S_0(\mathcal D)+o(1))x \qquad\qquad\qquad\qquad {(x\to+\infty)}$$
avec $ \mathfrak S_0(\mathcal D)$  d\'efinie (voir \cite{MS04}*{eq. (9)})  par $$
\mathfrak S_0(\mathcal D)=\sum_{\substack{q_1,\ldots,q_k\\ q_j\geq 2}}\Big( \prod_{j=1}^k\frac{\mu (q_j)}{\phi(q_j)}\Big)
\sum_{\substack{a_1,\ldots,a_k\\ 1\leq a_j\leq q_j \\ (a_j,q_j)=1\\ \sum_j a_j/q_j\in \mathbb Z}}\e\Big( \sum_{j=1}^k\frac{a_j d_j}{q_j}\Big).
$$
Le th\'eor\`eme des nombres premiers fournit $\mathfrak S_0(\{d\})=0$ pour tout $d\geq 1.$

Consid\'erant 
$$R_k(h):=\sum_{\substack{d_1,\ldots,d_k\\ 1\leq d_j\leq h\\ d_j \rm{distincts}}}\mathfrak S_0(\{ d_1,\ldots , d_k\}),$$
Montgomery et Soundararajan \cite{MS04} ont \'etabli   pour tout $k\geq 1$ fix\'e et $h\geq 1$ l'estimation asymptotique
\begin{equation}
\label{est Rkh}    
R_k(h)=\mu_k \big(-h\log h+Ah\big)^{k/2} +O_k\big(h^{k/2-1/(7k)+\varepsilon}\big),
\end{equation}
o\`u $A:= 2-\gamma-\log (2\pi) $ et $\mu_k=(2k')!/2^{k'}k'! $ si $k=2k'$ est pair et $ 0$ si $k$ est impair. Pour cela, ils utilisent l'article important de Montgomery et Vaughan~\cite{MV86}. Rappelons aussi que, d'apr\`es \cite{G76}, la somme relative \`a $\mathfrak S$ et non $\mathfrak S_0$ est \'equivalente \`a $h^k$ pour tout $k\geq 1.$ Cela s'explique notamment dans le fait que $\mathfrak S(\mathcal D)$ est toujours positif ou nul.

Lorsque $k$ est impair, la relation \eqref{est Rkh} ne fournit pas d'\'equivalent asymptotique. Lemke Oliver et Soundararajan \cite{LOS16} et Kuperberg~\cite{K21} conjecturent  lorsque $k$ est impair fix\'e et $h\geq 2$ la relation
\begin{equation}
    \label{conjK}
R_k(h)\asymp h^{(k-1)/2}(\log h)^{(k+1)/2} \end{equation}et Kuperberg~\cite{K21} montre la majoration $ R_3(h)\ll h(\log h)^{5}.$  
De plus, Bloom et Kuperberg \cite{BK23} montrent  pour tout $k\geq 3$ impair fix\'e, $h\geq 2$, la majoration 
$$R_k(h)\ll h^{(k-1)/2}(\log h)^{O_k(1)}. $$

Le but de cet article est de d\'eterminer une \'evaluation asymptotique de $R_3(h)$. Nous notons $\log\log=\log_2.$
Nous obtenons l'estimation suivante pour laquelle nous n'avons pas cherch\'e \`a obtenir dans le terme d'erreur le meilleur exposant possible de $\log_2 h.$ 
\begin{theorem}\label{mainth}
    Lorsque $h\geq 3$, nous avons 
$$R_3(h)= \tfrac 92 h(\log h)^2\Big(1+O\Big( \frac{(\log_2 h)^{14}}{\log h}\Big)\Big).$$
\end{theorem}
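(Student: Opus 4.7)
The plan is to follow the Fourier expansion of $\mathfrak{S}_0$ from \cite{MS04}*{eq. (9)} and push the Montgomery--Soundararajan method to extract a true asymptotic in the odd case $k=3$, where the leading term of \eqref{est Rkh} vanishes. First, I would insert that expansion into the definition of $R_3(h)$ and swap the order of summation. Treating the distinctness condition $d_i\neq d_j$ via inclusion--exclusion on diagonal pairs, whose contribution is controlled by the $k=1,2$ cases and is $O(h(\log h)^{O(1)})$, the sum reduces to
$$R_3(h) = \sum_{q_1,q_2,q_3\geq 2}\prod_{j=1}^{3}\frac{\mu(q_j)}{\phi(q_j)} \sum_{\substack{(a_j,q_j)=1,\; 1\leq a_j\leq q_j\\ a_1/q_1+a_2/q_2+a_3/q_3\in\Z}} \prod_{j=1}^{3} T_{q_j,a_j}(h) + O\bigl(h(\log h)^{O(1)}\bigr),$$
where $T_{q,a}(h):=\sum_{1\leq d\leq h}\e(ad/q)$ is a truncated geometric sum.

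Next, I would parametrize the resonance $\sum_j a_j/q_j\in\Z$ by the common denominator $Q:=\mathrm{lcm}(q_1,q_2,q_3)$. Setting $a_j/q_j=b_j/Q$ converts the inner constraint into $b_1+b_2+b_3\equiv 0\pmod Q$. The exponential sums $T_{q,a}(h)$ are treated via the classical bound $T_{q,a}(h)=-\tfrac12+O(\min(h,\|a/q\|^{-1}))$, which is effectively of size $h$ when $\|a/q\|\ll 1/h$ and small otherwise. The dominant contribution arises from configurations where the three moduli are comparable and the $b_j$ cluster near multiples of $Q$: the resulting triple sum transforms into a Fej\'er-kernel-type triple integral after rescaling, and collapses through identities such as $\sum_{q\leq H}\mu^2(q)/(q\phi(q))=\log H+C+O(1/H)$. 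The precise constant $\tfrac 92$ emerges from the combinatorial count of balanced admissible triples $(b_1,b_2,b_3)$ combined with a residue extraction at a well-chosen contour, mirroring the mechanism that produces the constants $\mu_k$ for even $k$ in \cite{MS04}.

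The main obstacle is the uniform error analysis. Since $\mu_3=0$, the target main term is of lower order than what one would obtain by a direct application of the $k=2$ estimate, so one cannot simply import the Montgomery--Vaughan bounds and must refine them. I expect to need a dyadic decomposition of $(q_1,q_2,q_3)$ in conjunction with Cauchy--Schwarz on the exponential sums to handle the unbalanced regimes where one modulus dominates the others, with mean-value estimates in the style of \cite{MV86} controlling the sum of products of $T_{q_j,a_j}(h)$ over short dyadic blocks. The somewhat crude exponent $(\log_2 h)^{14}$ in the error term reflects the accumulation of logarithmic losses at each dyadic scale, at each sieve truncation of the Euler products, and from a Perron-type smoothing used to isolate the main term; no attempt at optimization is made.
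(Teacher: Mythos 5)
Your proposal does not close the gap that separates the known upper bounds from an asymptotic formula. Two of its steps are already fatal as written. First, you dispose of the distinctness condition with an error $O(h(\log h)^{O(1)})$: but the degenerate contributions (the cases $d_1=d_2\neq d_3$ and $d_1=d_2=d_3$) are of size $h\big(q/\phi(q)\big)^2\asymp h(\log h)^2$ and $h\big(q/\phi(q)\big)\log h\asymp h(\log h)^2$, i.e.\ exactly the order of the main term you are after; they must be evaluated asymptotically with a secondary precision (this is what \eqref{estR3h}, built on Kuperberg's Lemmes 2.4--2.5 and on \eqref{estR2}, does), not absorbed into an error term. Second, and more seriously, your error analysis (dyadic blocks, Cauchy--Schwarz, Montgomery--Vaughan mean values, trivial bounds on the $E_h$) is precisely the toolkit of \cite{K21} and \cite{BK23}, and it provably cannot do better than $h(\log h)^{4}$: the obstruction is the factor $\mu(g)$ attached to the common parameter $g$ in the decomposition $q_1=gyz$, $q_2=gxz$, $q_3=gxy$. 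If you ever take absolute values in $g$ you lose two powers of $\log h$ relative to the target. The whole point of the proof is to keep the sign of $\mu(g)$, sum over $g$ first in the ranges where it oscillates (cf.\ \eqref{somgmu}, \eqref{majMm} and Lemme~\ref{lemme s(T)}), and thereby confine the main term to $g\leq T$ with $T=\exp\{150(\log_2 h)^2\}$. Your sketch never mentions this gcd-structure, so it has no mechanism either for the cancellation or for the constant $\tfrac92$ (which in the paper arises as $3+\tfrac32$ from two distinct regimes, Lemmes~\ref{lemme estS1}/\ref{LemmeestC1} and \ref{lemme estS3}, after cancellation of the $\big(q/\phi(q)\big)^2$ and $\big(q/\phi(q)\big)\log h$ pieces against the diagonal corrections --- not from ``balanced triples'' alone).

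There is also a missing analytic input. In the regime where the variables $a_1,a_2$ are confined to short ranges, one must sum over $x,y$ subject to $a_1x+a_2y\equiv 0\ (\bmod\ z)$ with the weight $\mu^2$, and obtain an asymptotic with a power-saving error \emph{on average over the residue class}. Hooley's individual estimate \eqref{estHooely} fails as soon as $z>x^{2/3}$, a range that genuinely occurs here; the paper gets around this with an explicit form of Le Boudec's variance bound (Lemmes~\ref{lemme Le Boudec}--\ref{lemme sumW}). Nothing in your proposal (Cauchy--Schwarz plus \cite{MV86}) substitutes for this, and without it the unbalanced regimes contribute an error larger than $h(\log h)^2$. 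Finally, the extraction of the main term itself requires the two-dimensional Abel summation of Lemme~\ref{somdabel} to convert the arithmetic sums over $(a_1,a_2)$ into the circle-method integral $\int\!\!\int E_h E_h E_h=hq_1q_2$ with controlled derivatives; your ``Fej\'er-kernel triple integral'' and ``residue extraction at a well-chosen contour'' are the right intuition but are not an argument.
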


R\'ecemment, Leung \cite{L24} a \'etudi\'e des g\'en\'eralisations de $R_k(h)$ o\`u les conditions $d_j\equiv a \mod r$ sont rajout\'ees \`a la somme d\'efinissant $R_k(h)$. Il est clair que nos m\'ethodes fonctionnent dans ce cadre.

D\'eterminer un \'equivalent asymptotique de $R_3(h)$ est  la premi\`ere \'etape cruciale pour estimer asymptotiquement $R_{2k+1}(h)$ pour tout~$k\geq~1.$
L'estimation de $R_3(h)$ du Th\'eor\`eme~\ref{mainth} permet de conjecturer l'ordre asymptotique des moments d'ordre impair. Nous retrouvons la conjecture \eqref{conjK} et nous la pr\'ecisons.

\begin{conjecture}
    \label{conj}
 Soit $k\geq 1$.   Lorsque $h $ tend vers l'infini, nous avons
$$R_{2k+1}(h)\sim (-1)^{k-1}r_{2k+1} h^k(\log h)^{k+1},$$
avec 
$$r_{2k+1}:=\tfrac 32 (2k+1)k \mu_{2k}.$$
\end{conjecture}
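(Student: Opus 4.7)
The plan is to extend the combinatorial expansion underlying the Montgomery--Soundararajan analysis of $R_k(h)$ for even $k$ to the odd case, with Theorem~\ref{mainth} supplying the decisive new input. Following \cite{MS04}, one would substitute the Fourier-type formula for $\mathfrak{S}_0(\{d_1,\ldots,d_{2k+1}\})$ recalled above, sum over distinct $d_j\in[1,h]$, and organize the resulting $(2k+1)$-dimensional sum over moduli according to the ``block structure'' induced by the constraints $\sum_j a_j/q_j\in\mathbb{Z}$.

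For even $k$, the main term of \cite{MS04} arises from perfect matchings into blocks of size $2$, each pair contributing a factor $-h\log h + Ah$. Since $2k+1$ is odd, no perfect pairing exists; the natural replacement is a configuration with exactly one block of size $3$ and $k-1$ blocks of size $2$. The triple block should contribute the new estimate $\tfrac{9}{2}h(\log h)^2$ of Theorem~\ref{mainth}, while each pair contributes the usual $-h\log h+Ah$. In other words, the analogue of the Wick formula for Gaussian moments is replaced by a ``one triple plus pairs'' combinatorics.

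The combinatorial count yields $\binom{2k+1}{3}$ choices for the triple and $\mu_{2k-2}=(2k-2)!/(2^{k-1}(k-1)!)$ matchings of the $2k-2$ remaining indices. Using the identity $\mu_{2k}=(2k-1)\mu_{2k-2}$, the constant simplifies as
$$\binom{2k+1}{3}\mu_{2k-2}\cdot\tfrac{9}{2}=\frac{(2k+1)(2k)(2k-1)}{6}\cdot\frac{\mu_{2k}}{2k-1}\cdot\frac{9}{2}=\tfrac{3}{2}(2k+1)k\,\mu_{2k}=r_{2k+1},$$
while the $k-1$ pair contributions provide both the sign $(-1)^{k-1}$ and the factor $h^{k-1}(\log h)^{k-1}$. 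Combined with the triple, this reproduces $(-1)^{k-1}r_{2k+1}h^k(\log h)^{k+1}$, consistently with Theorem~\ref{mainth} at $k=1$ and with the recursion $r_3=\tfrac{9}{2}$, $r_5=45$, etc.

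The main obstacle is to realize this decomposition rigorously at the level of the singular series. Three ingredients are needed: (i) a version of Theorem~\ref{mainth} sufficiently uniform to play the role of a ``triple block'' estimate inside the remaining $(k-1)$-fold sum, ideally including the next lower-order term so that the error can absorb the pair contributions; (ii) an adaptation of the Montgomery--Vaughan machinery of \cite{MV86} to configurations mixing one block of size $3$ with $k-1$ pairs, which requires new decoupling estimates for the joint phase sums associated to a triple and a family of pairs; and (iii) the verification that alternative block structures (a single block of size $\geq 4$, or several triples) contribute strictly less than $h^k(\log h)^{k+1}$. Point (iii) is likely the most delicate, as it requires controlling ``higher singular moments'' that go beyond the inputs used in Theorem~\ref{mainth}; bounds in the spirit of Bloom--Kuperberg \cite{BK23} would probably be required, combined with a refined treatment of the intermediate block sizes.
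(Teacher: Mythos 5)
Your proposal reproduces exactly the heuristic the paper itself gives to motivate this statement: the dominant configuration is one block of size $3$ together with $k-1$ pairs, giving $R_{2k+1}(h)\sim\binom{2k+1}{3}\mu_{2k-2}\,R_3(h)R_2(h)^{k-1}$, and the constant $r_{2k+1}=\tfrac32(2k+1)k\mu_{2k}$ then follows from Th\'eor\`eme~\ref{mainth} and the estimate \eqref{estR2} via the same identity $\mu_{2k}=(2k-1)\mu_{2k-2}$ you use. Note that the statement is a conjecture and the paper offers no proof either, so your (correct) list of remaining obstacles --- uniformity of the triple-block estimate, the mixed Montgomery--Vaughan decoupling, and the exclusion of other block structures --- accurately reflects what is still open.
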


Expliquons maintenant l'intuition qui nous permet d'\'enoncer cette conjecture. Dans le cas pair d'apr\`es \cite{MS04}, les cas dominants correspondent aux deux $2k$-uplets $(a_1, \ldots,a_{2k}) $ et $(q_1, \ldots,q_{2k}) $ tels que $q_{i}=q_{i+k}$ et $a_{i+k}=q_{i}-a_i$ pour tout $1\leq i\leq k$ et tous les autres cas obtenus par permutation des indices. Dans \cite{MS04}, cela permet d'obtenir lorsque $h $ tend vers l'infini l'\'equivalent
$$R_{2k} (h)\sim \mu_{2k} R_2(h)^k\sim \mu_{2k} (-h\log h)^k.$$
Pour une estimation de $R_2$ \'etablie dans \cite{MS04}, on pourra voir {\it infra} \eqref{estR2}.
Dans le cas impair, les cas dominants devraient correspondre aux   deux $(2k+1)$-uplets $(a_1, \ldots,a_{2k+1}) $ et $(q_1, \ldots,q_{2k+1}) $ tels que 
$a_{2k-1}/q_{2k-1}+a_{2k}/q_{2k}+a_{2k+1}/q_{2k+1}\in \mathbb{Z}$ et $q_{i}=q_{i+k-1}$ et $a_{i+k-1}=q_{i}-a_i$ pour tout $1\leq i\leq k-1$ et tous les autres cas obtenus par permutation des indices.
 Cela permet d'esp\'erer lorsque $h $ tend vers l'infini l'\'equivalent asymptotique 
$$R_{2k+1} (h)\sim \binom{2k+1}{3}\mu_{2k-2} R_3(h)R_2(h)^{k-1}.$$
Notre Th\'eor\`eme \ref{mainth} et l'estimation \eqref{estR2}    permettent de formuler notre conjecture.

Notons enfin qu'il n'y a pas d'obstacle conceptuel pour que la m\'ethode de la preuve du Th\'eor\`eme \ref{mainth} ne fonctionne pas pour estimer $R_{2k+1}(h)$ lorsque $k\geq 2.$ En revanche, le nombre $2^{2k+1}-2k-2$ de variables qui param\`etrent les d\'enominateurs $q_j$ devient vite trop grand pour \^etre facilement appr\'ehend\'e. 

Nous d\'eveloppons maintenant une cons\'equence importante du Th\'eor\`eme \ref{mainth}. 
 Nous estimons lorsque $k=3$ le moment
$$M_k(X,h):=\frac1X\sum_{1\leq n\leq X} \big(\psi(n+h)-\psi(n)-h\big)^k$$
o\`u 
$$\psi(x):=\sum_{1\leq n\leq x}\Lambda(n).$$ 
Conditionnellement \`a une forme quantitative de la conjecture de Hardy et Littlewood, 
Montgomery et Soundararajan \cite{MS04} \'etablissent une estimation asymptotique de $M_k(X,h)$ lorsque $k$ est pair. Lorsque $k$ est impair, ils n'obtiennent qu'une majoration. Nous reprenons les hypoth\`eses de \cite{MS04}.

\begin{theorem}\label{consth}
Soit $E_k(x;{\cal D})$ d\'efini par la relation
\begin{equation}
    \label{hypo}
\sum_{1\leq n\leq x} \prod_{j=1}^k \Lambda(n+d_j)= \mathfrak S(\mathcal D) x+E_k(x;{\cal D} ),\end{equation}
lorsque ${\cal D}=\{ d_1,\ldots,d_k \} $ avec les $d_j$ dtsincts.
Supposons qu'uniform\'ement lorsque $1\leq k\leq K,$
$0\leq x\leq X$ et ${\cal D}$ un ensemble de $k$ entiers $d_j$ distincts tous $\leq h$
nous ayons $|E_k(x;{\cal D} )|\leq E_K(X,h) $ avec $E_K(X,h)\geq \sqrt{X} $.
Alors,  avec $B:=1-\gamma-\log (2\pi)$ o\`u $\gamma$ est la constante d'Euler, pour tout $\varepsilon>0$, nous avons lorsque $h\leq X^{1/3}$
\begin{equation}\label{estM3(X,h)}
    \begin{split}
     M_3(X,h)= &h \big( \tfrac 92(\log h)^2-3(\log h-B)(\log X) +(\log X )^2\big) 
\cr&+O\big(  h(\log h)(\log_2 h)^{14}+ h (h+ \log X)^2  E_3(X,h)/X+h^{1/2+\varepsilon} \log X  \big).
 \end{split}
\end{equation}
\end{theorem}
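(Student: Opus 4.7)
We follow the Montgomery--Soundararajan framework for even moments of $\psi(n+h)-\psi(n)-h$, now using Theorem~\ref{mainth} as the new input in the odd case $k=3$. Writing $\psi(n+h)-\psi(n)-h = \sum_{1\leq j\leq h}\Lambda_0(n+j)$, the cube expands as
\[
M_3(X,h) = \frac{1}{X}\sum_{1\leq j_1,j_2,j_3\leq h}\sum_{1\leq n\leq X}\prod_{i=1}^3 \Lambda_0(n+j_i),
\]
and the outer sum is partitioned by the equality pattern of $(j_1,j_2,j_3)$ into three pieces $T_3$ (three distinct values), $T_2$ (exactly two equal) and $T_1$ (all three equal). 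The plan is to evaluate each piece separately and then to collect main terms.

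For $T_3$, expand $\Lambda_0=\Lambda-1$ in the triple product and apply the hypothesis~\eqref{hypo} to each resulting subproduct, using $\mathfrak{S}(\{d\})=1$ and the inclusion-exclusion identity expressing $\mathfrak{S}_0$ in terms of the $\mathfrak{S}$ of sub-tuples. Each inner sum then equals $\mathfrak{S}_0(\{d_1,d_2,d_3\})X + O(E_3(X,h))$, so $T_3 = R_3(h) + O(h^3 E_3(X,h)/X)$. Applying Theorem~\ref{mainth} yields $T_3 = \tfrac{9}{2}h(\log h)^2 + O\bigl(h\log h(\log_2 h)^{14} + h^3 E_3(X,h)/X\bigr)$.

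The analytic work lies in $T_2$, where one must compute $\sum_{1\leq n\leq X}\Lambda_0(n+j)^2\Lambda_0(n+k)$ for $j\neq k$. Expanding $\Lambda_0=\Lambda-1$ produces six sums; the only non-standard one, $\sum_n\Lambda(n+j)^2\Lambda(n+k)$, is handled by writing $\Lambda(m)^2 = \Lambda(m)\log m - \delta(m)$, where $\delta$ is supported on proper prime powers (so that $\sum_n \delta(n+j)\Lambda(n+k) = O(\sqrt{X}(\log X)^3)$), and then using Abel summation against~\eqref{hypo} at $k=2$ to evaluate
\[
\sum_n\Lambda(n+j)\log(n+j)\Lambda(n+k) = \mathfrak{S}(\{j,k\})X(\log X - 1) + O\bigl((h+E_3(X,h))\log X\bigr).
\]
Assembling the six pieces and using $\mathfrak{S}_0(\{j,k\}) = \mathfrak{S}(\{j,k\}) - 1$ yields $\sum_n\Lambda_0(n+j)^2\Lambda_0(n+k) = \mathfrak{S}_0(\{j,k\})X(\log X - 3) + (\text{errors})$. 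Summing over distinct $(j,k)$ and substituting $R_2(h) = -h\log h + Ah + O(h^{1-1/14+\varepsilon})$ from~\eqref{est Rkh} with $k=2$, together with $A = B+1$, the contribution becomes $T_2 = -3h(\log h - B)\log X + (\text{lower order})$.

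Finally, $T_1$ is treated by iterated Abel summation from~\eqref{hypo} at $k=1$, giving $\sum_n\Lambda(n+j)^\ell \sim X(\log X)^{\ell-1}$ for $\ell\in\{1,2,3\}$ with explicit lower-order terms, whence $T_1 = h(\log X)^2 + (\text{lower order})$. Adding $T_1+T_2+T_3$ and grouping powers of $\log h$ and $\log X$ produces the stated main term. The principal difficulty is the bookkeeping: error sources of several kinds — the $(\log_2 h)^{14}$ loss from Theorem~\ref{mainth}, the $h^{1-1/14+\varepsilon}$ tail of $R_2$, the $\sqrt{X}(\log X)^c$ prime-power remainders, the $E_3$-type hypothesis errors amplified by $\log X$ factors from the Abel summations, and boundary corrections coming from $j,k\leq h$ — must all be shown to fit within the stated bound $O\bigl(h\log h(\log_2 h)^{14} + h(h+\log X)^2 E_3(X,h)/X + h^{1/2+\varepsilon}\log X\bigr)$, which is ultimately what forces the constraint $h\leq X^{1/3}$.
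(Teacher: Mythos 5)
Your proposal follows essentially the same route as the paper: expand the cube of $\sum_{d\leq h}\Lambda_0(n+d)$ by the coincidence pattern of the shifts, feed the all-distinct piece through the hypothesis and Th\'eor\`eme~\ref{mainth}, reduce the doubled piece to $R_2(h)$ times a logarithmic weight via the identity $\Lambda_0^2=\Lambda\Lambda_0-\Lambda_0$ (your $\Lambda^2=\Lambda\log-\delta$ device is the same computation) and Abel summation, and compute the tripled piece directly. The structure and all main terms match the paper's $M_{3,1},M_{3,2},M_{3,3}$.

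One concrete quantitative slip: for $R_2(h)$ you invoke \eqref{est Rkh} with $k=2$, whose error term is $O\big(h^{1-1/14+\varepsilon}\big)=O\big(h^{13/14+\varepsilon}\big)$. After multiplication by the $\log X$ coming from the Abel summation in the $T_2$ piece this produces an error $O\big(h^{13/14+\varepsilon}\log X\big)$, which is \emph{not} contained in the stated remainder $O\big(h^{1/2+\varepsilon}\log X\big)$ (nor, for small $h$, in $h(\log h)(\log_2h)^{14}$). The paper instead uses the sharper estimate \eqref{estR2}, namely $R_2(h)=-h(\log h-B-1)+O(h^{1/2+\varepsilon})$ from Montgomery--Soundararajan (their eq.~(16)); the main terms agree since $A=B+1$, but the $h^{1/2+\varepsilon}$ error is exactly what the theorem's remainder requires. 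Replacing your citation of \eqref{est Rkh} by \eqref{estR2} repairs this; everything else in your sketch is consistent with the paper's argument.
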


Le Th\'eor\`eme \ref{consth} fournit un \'equivalent asymptotique de $M_3(X,h)$ lorsque, par exemple, il existe $\varepsilon>0$ tel que 
$(h+ \log X)^2  E_3(X,h)\ll X(\log X)^{2-\varepsilon}$. L'hypoth\`ese \eqref{hypo} avec la majoration $E_3(X,h)\ll_\varepsilon X^{1/2+\varepsilon}$ pour tout $\varepsilon>0$ implique l'hypoth\`ese de Riemann.

Sous l'hypoth\`ese de Riemann, il est \'etabli dans \cite{BF21} une minoration d'une forme pond\'er\'ee de $(-1)^kM_k(X,h) $ mais pour des valeurs de $h$ tr\`es proche de $X$ (par exemple dans l'intervalle $X/(\log X)^{k/2-1-\varepsilon} <h\leq X$). Les deux r\'esultats n'ont donc pas d'intersection de domaine de validit\'e. 

\smallskip
Sous l'hypoth\`ese de Hardy et Littlewood avec $E_{2K} (X,h)\ll X^{1/2+\varepsilon}$ pour tout $\varepsilon>0$, Montgomery et Soundararajan \cite{MS04} \'etablissent l'estimation valable dans le domaine  $\log X\leq h\leq X^{1/2K}$
$$M_{2K}(X,h)=\mu_{2K} \frac{h^K}{X}\int_0^X (\log x/h+B)^{K}\d x
+O\bigg(  (\log X)^K H^K\Big(\frac{H}{\log X}\Big)^{-1/16K}
+H^KX^{-1/2+\varepsilon}\bigg).$$

La conjecture \ref{conj} et celle de Hardy et Littlewood nous permettent d'obtenir une expression conjecturale de $M_{K}(X,h)$ pour tout $K$ impair. Dans le cas $K=3,$ nous retrouvons une forme affaiblie du Th\'eor\`eme \ref{consth}.
\begin{theorem}
     \label{conjMoment} Soit $K\geq 1$.
Sous la conjecture \ref{conj} et celle de Hardy et Littelwood \'enonc\'ee au Th\'eor\`eme \ref{consth}, 
 lorsque $h $ et $X$ tendent vers l'infini avec $\log X\leq h\leq X^{1/(2K+1)}$, nous avons
\begin{equation}\label{MKth} \begin{split}
     M_{2K+1}&(X,h)  \cr=&  (\tfrac 13+o(1))\mu_{2K+2}K h^{K} 
\big(\log  (X/h) \big)^{K-1}
\big( \tfrac 92(\log h)^2-3(\log h)(\log X) +(\log X  )^2\big)\cr&+  O\big(  h^{2K+1}   E_{2K+1}(X,h)/X\big).\end{split}
\end{equation} \end{theorem}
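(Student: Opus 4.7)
Le plan est d'imiter l'approche de Montgomery--Soundararajan \cite{MS04} utilis\'ee pour les moments pairs et pour la preuve du Th\'eor\`eme~\ref{consth} (cas $K=1$), en l'\'etendant combinatoirement \`a l'ordre impair $2K+1$. On \'ecrit $\psi(n+h)-\psi(n)-h = \sum_{d=1}^h \Lambda_0(n+d)$, on d\'eveloppe la puissance $(2K+1)$-\`eme et on somme sur $n\leq X$~:
\begin{equation*}
X M_{2K+1}(X,h) = \sum_{(d_1,\ldots,d_{2K+1})\in [1,h]^{2K+1}}\ \sum_{n\leq X}\prod_{j=1}^{2K+1}\Lambda_0(n+d_j).
\end{equation*}
On classe les $(2K+1)$-uplets selon la partition $\pi$ de $\{1,\ldots,2K+1\}$ d\'efinie par $i\sim_\pi j$ si et seulement si $d_i=d_j$. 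Pour $\pi$ comportant $m$ blocs de tailles $b_1,\ldots,b_m$, la contribution s'\'ecrit
\begin{equation*}
S_\pi(X,h) := \sum_{\substack{d_1',\ldots,d_m' \in [1,h]\\ \text{distincts}}}\ \sum_{n\leq X}\prod_{i=1}^m \Lambda_0(n+d_i')^{b_i}.
\end{equation*}

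Pour estimer chaque $S_\pi$, on d\'eveloppe $\Lambda_0^{b_i}=(\Lambda-1)^{b_i}$ par la formule du bin\^ome, ce qui r\'eduit $S_\pi$ \`a une somme sign\'ee de corr\'elations du type $\sum_n \prod_i \Lambda(n+d_i')^{c_i}$. En utilisant l'identit\'e $\Lambda^{c}(p)=\Lambda(p)\cdot(\log p)^{c-1}$ sur les nombres premiers (les puissances sup\'erieures contribuant $O(\sqrt X \log^{O(1)}X)$) et l'hypoth\`ese \eqref{hypo}, chacune de ces corr\'elations vaut $\mathfrak{S}(\mathcal D')X\prod_i(\log X)^{c_i-1}$ \`a un terme d'erreur pr\`es, contr\^ol\'e par $E_k(X,h)$ pour $k\leq 2K+1$. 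Apr\`es sommation sur les tuples $(d_i')$ distincts, on est ramen\'e \`a des sommes du type $R_{k'}(h)$ ($k'\leq 2K+1$), que l'on contr\^ole via la Conjecture~\ref{conj} pour les $k'$ impairs et via \cite{MS04} pour les $k'$ pairs. En particulier le cas $m=2K+1$ (tous singletons) donne exactement $R_{2K+1}(h)\,X$.

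Le c\oe{}ur de l'analyse est d'identifier les partitions dominantes. Un d\'ecompte de tailles (puissances de $h$ et de logarithmes) montre que la contribution principale \`a $M_{2K+1}(X,h)$ provient des partitions $\pi$ avec un bloc de taille $3$ et $K-1$ blocs de taille $2$, soit $m=K$ blocs au total, au nombre de $\binom{2K+1}{3}(2K-3)!!$. Pour une telle $\pi$, la corr\'elation int\'erieure se factorise asymptotiquement en un facteur li\'e au triplet (trait\'e comme dans la preuve du Th\'eor\`eme~\ref{consth}, produisant $\sim h\bigl(\tfrac 92(\log h)^2-3(\log h)(\log X)+(\log X)^2\bigr)$) et $K-1$ facteurs li\'es aux paires (chacun $\sim h\log(X/h)$ d'apr\`es \cite{MS04}). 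L'identit\'e combinatoire
\begin{equation*}
\binom{2K+1}{3}(2K-3)!! = \frac{(2K+1)!}{3\cdot 2^K(K-1)!} = \tfrac 13\mu_{2K+2}\,K
\end{equation*}
fournit alors le coefficient annonc\'e dans \eqref{MKth}.

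Le principal obstacle est le traitement rigoureux des autres types de partitions. Chacune fournit un polyn\^ome en $\log h$ et $\log X$, de degr\'e total au plus $K+1$, multipli\'e par $h^{m(\pi)}$; il faut v\'erifier que la somme de toutes ces contributions reproduit pr\'ecis\'ement $(\log(X/h))^{K-1}\bigl(\tfrac 92(\log h)^2-3(\log h)(\log X)+(\log X)^2\bigr)$ et que les restes s'absorbent dans le facteur $o(1)$. Comme test de coh\'erence, le cas de tous singletons contribue au mon\^ome pur $h^K(\log h)^{K+1}$ avec coefficient $(-1)^{K-1}\tfrac 32(2K+1)K\mu_{2K}$ (Conjecture~\ref{conj}), qui co\"incide avec le coefficient du m\^eme mon\^ome dans le terme principal pr\'edit, gr\^ace \`a l'identit\'e $\mu_{2K+2}=(2K+1)\mu_{2K}$.
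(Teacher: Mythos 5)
Your overall strategy --- expand the $(2K+1)$-st power, classify the tuples by the coincidence partition of the $d_j$'s, and reduce each class to sums $R_{k'}(h)$ controlled by Conjecture~\ref{conj} and by \cite{MS04} --- is exactly the route the paper takes (it is the decomposition (61) of \cite{MS04}). The gap lies in your identification of the dominant partitions and in the factorization you attribute to them. First, the partitions with one block of size $3$ and $K-1$ blocks of size $2$ are \emph{not} the only ones contributing at leading order: every partition into singletons, pairs and at most one triple contributes a term of size $h^{K}$ times a polynomial of degree $K+1$ in $\log h,\log X$. Your own consistency check proves this: the all-singletons partition contributes $R_{2K+1}(h)\asymp h^{K}(\log h)^{K+1}$, which is of the same order as your claimed main term throughout the range $\log X\leq h\leq X^{1/(2K+1)}$, so it cannot be discarded, and the same holds for all intermediate partitions. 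Second, and more seriously, the claimed factorization is false: for the partition with one block $\{d_1'=d_2'=d_3'\}$, the ``triplet factor'' is the diagonal correlation $\sum_{d'}\sum_{n}\Lambda_0(n+d')^{3}$, whose main term is $\sim h(\log X)^{2}$ (this is $M_{3,3}$ in the proof of Th\'eor\`eme~\ref{consth}), and \emph{not} the full polynomial $\tfrac 92(\log h)^2-3(\log h)(\log X)+(\log X)^2$; that polynomial is the main term of $M_3(X,h)$ itself, i.e.\ the sum over all three coincidence patterns of a triple of indices. Already for $K=1$ your ``dominant partition'' yields only $h(\log X)^2$ and misses the rest of the theorem.

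The correct derivation keeps the two families of partitions with $k+i\in\{2K,2K+1\}$ (in the paper's notation, $k$ is the number of blocks and $i$ the number of blocks carrying an extra factor $\Lambda$): the family ``singletons $+$ pairs'' produces, via the odd moments $R_{2k-2K-1}(h)$ of Conjecture~\ref{conj} and the even moments $R_{2k-2K}(h)$, the portion $-\mu_{2K+2}K(\log h)\big(\log(X/h)\big)^{K-1}\log(X/h^{3/2})$ of the main term, while the family ``one triple $+$ pairs $+$ singletons'' produces $\tfrac13\mu_{2K+2}K\big(\log(X/h)\big)^{K-1}(\log X)^{2}$; only their \emph{sum} collapses to $\tfrac 13\mu_{2K+2}K\big(\log(X/h)\big)^{K-1}\big(\tfrac 92(\log h)^2-3(\log h)(\log X)+(\log X)^2\big)$. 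Carrying this out requires the exact multinomial weights and the two binomial identities of the paper (e.g.\ $\frac{(2K+1)!}{(K+1+k')!\,2^{K-k'}}\binom{K+1+k'}{K-k'}\big(r_{2k'+1}+(K-k')\mu_{2k'+2}\big)=\mu_{2K+2}(K+\tfrac12 k')\binom{K}{k'}$) together with the closed-form evaluation of the resulting sums over $k'$; none of this resummation is present in your argument, and it cannot be replaced by the single-partition heuristic.
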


Nous d\'etaillons maintenant notre m\'ethode pour \'etablir le Th\'eor\`eme \ref{mainth} en essayant de mettre en valeur les id\'ees de nos preuves.

Dans toute la suite, $h$ sera un entier $\geq 2$. Nous consid\'erons
$$E_h(\alpha):=\sum_{1\leq d\leq h} \e (d\alpha)=\e(\alpha)\frac{\e(h\alpha)-1}{\e(\alpha)-1}\qquad \quad (\alpha\in \mathbb R\smallsetminus \mathbb Z). $$ 
Nous introduisons la somme $V_3(h)$  d\'efinie par
\begin{equation}
    \label{est V3}V_3(h) = 
\sum_{\substack{P(q_1 q_2 q_3)\leq h^3\\ q_j\geq 2}} \prod_{j=1}^3 \frac{\mu(q_j)}{\phi(q_j)} \sum_{\substack{a_1,a_2,a_3\\ 1\leq a_j\leq q_j\\ (a_j,q_j)=1\\ \sum_j a_j/q_j\in \Z}}\prod_{j=1}^3 E_h\Big(\frac{a_j}{q_j}\Big)
\end{equation} qui correspond \`a la somme $R_3(h)$ avec la restriction suppl\'ementaire $P(q_1 q_2 q_3)\leq h^3$ mais sans la condition $d_j$ distincts. Ici, et dans la suite, $P(n)$ d\'esigne le plus grand facteur premier d'un entier $n$ avec la convention $P(1)=1$.
Nous posons $q:=\prod_{p\leq h^3} p$.
Suivant \cite{K21}*{lemmes 2.4 et 2.5}, pour tout $\varepsilon>0$, nous avons 
\begin{equation} \label{estR3h}\begin{split}
R_3(h)   =&V_3(h)-h\Big(\frac{q}{\phi(q)}\Big)^2+3h\big(  \log h- B\big)\frac{q}{\phi(q)}\cr&+h\big( -6 \log h+6B  +4\big)+O\big(h^{1/2+\varepsilon} \big)
. \end{split} \end{equation} 
L'estimation de $R_3(h)$ se r\'eduit donc \`a celle de $V_3(h)$.

Comme dans \cite{K21}, pour les entiers $q_j$ de la somme \eqref{est V3}, nous utilisons la d\'ecomposition  
\begin{equation}
    \label{def qj}q_1=gyz,\quad q_2=gxz,\quad q_3=gxy,\end{equation}
avec $g,x,y,z$ des entiers naturels premiers entre eux. La condition $\sum_j a_j/q_j\in \Z$ est alors \'equivalente \`a \begin{equation}
    \label{congruence avec aj}
a_1x+a_2y+a_3z\equiv 0(\bmod gxyz).\end{equation}                
Nous pouvons donc \'ecrire 
\begin{equation}
    \label{calculV3}
V_3(h)=\sum_{\substack{P(gxyz)\leq h^3\\ gyz, gxz,gxy\geq 2 }}\frac{\mu(g)\mu(gxyz)^2}{\phi(g)\phi(gxyz)^2}V(g,x,y,z;h),
\end{equation}
avec
\begin{equation}
    \label{def Vh}
V(g,x,y,z;h):=
\sum_{\substack{a_1,a_2,a_3\\ 1\leq a_j\leq q_j\\ (a_j,q_j)=1\\ \sum_j a_j/q_j\in \Z}}\prod_{j=1}^3 E_h\Big(\frac{a_j}{q_j}\Big).
\end{equation} 

Comme cela a \'et\'e remarqu\'e par Bloom et Kuperberg \cite{BK23}, la condition 
$\sum_j a_j/q_j\in \Z$ intervient dans de nombreux probl\`emes importants. Nous renvoyons \`a \cite{BK23} pour une bibliographie compl\`ete.
La difficult\'e principale de l'estimation de la somme $V_3(h)$ provient du caract\`ere oscillant du facteur $\mu(g)$. Dans \cite{K21} et \cite{BK23}, ce terme est major\'e en valeur absolue ce qui emp\^eche d'obtenir un majorant meilleur que $h(\log h)^4. $

La premi\`ere \'etape de la d\'emonstration du Th\'eor\`eme  \ref{mainth} consiste \`a utiliser les sym\'etries du probl\`eme afin de se placer dans un ensemble de comptage tel que $x\gg y\gg z$. Pour cela, nous introduisons les quantit\'es
\begin{equation}
    \label{defXYZ}
X=2^j,\quad Y=2^k,\quad Z=2^\ell,\end{equation}
et nous scindons alors le domaine de la sommation en des sous-domaines d\'efinis par
\begin{equation}\label{leqXYZ}
X\leq x< 2X, \quad Y\leq y< 2Y, \quad Z\leq z< 2Z. \end{equation}
Ainsi lorsque $(x,y,z)$ est fix\'e, nous avons
\eqref{defXYZ} avec 
$$
j=\Big\lfloor\frac{\log x}{\log 2} \Big\rfloor ,\qquad 
k=\Big\lfloor\frac{\log y}{\log 2} \Big\rfloor ,\qquad 
\ell=\Big\lfloor\frac{\log z}{\log 2} \Big\rfloor .
$$

Nous \'ecrivons aussi
$$Q_1=gYZ,\quad Q_2=gXZ,\quad Q_3=gXY $$ de sorte que 
$$Q_1\leq q_1< 4 Q_1, \quad Q_2\leq q_2< 4 Q_2, \quad Q_3\leq q_3<4 Q_3.$$
 Au  Lemme~\ref{lemme decoupage}, pour des raisons de  sym\'etries, nous pourrons alors  nous placer dans le cas o\`u 
$X\geq Y\geq Z$. La partition du domaine en $(x,y,z)$ en des cubes dyadiques est un des points importants de notre m\'ethode.

Pour pouvoir estimer pr\'ecis\'ement $V_3(h)$, il est donc important d'estimer asymptotiquement la somme int\'erieure $V(g,x,y,z;h)$ apparaissant dans le membre de droite de \eqref{est V3}. Si on somme selon les variables $a_1$ et $a_2$, la congruence \eqref{congruence avec aj} devient $a_1x+a_2y \equiv 0(\bmod  z)$. 
Les entiers $a_j$ contribuant principalement \`a la somme v\'erifient $a_j\approx q_j/h\approx Q_j/h.$ Ainsi si on fixe $a_1$
et on consid\`ere la congruence modulo $z$ v\'erifi\'ee par $a_2,$ on obtient une estimation pr\'ecise lorsque $Q_2/hz>T$ et $Q_1/h>T$ o\`u $T$ est un param\`etre suffisamment grand ($T$ sera pris \`a la fin de la d\'emonstration \'egal \`a $\exp\{  150(\log_2h)^2\}$). Cela sera fait au Lemme~\ref{lemme estS1} en utilisant  une  sommation d'Abel de dimension $2 $ (voir Lemme~\ref{somdabel}).

De m\^eme lorsque $Q_1/h\leq 1/T, $ la contribution  attendue  est petite puisque heuristiquement 
$a_1\approx Q_1/h.$ Cela sera montr\'e au Lemme~\ref{lemme estS2}.

Lorsque $a_1$ et $a_2$ ne varient pas dans des intervalles suffisamment grands, nous sommons par rapport aux variables $x$ et $y$ satisfaisant \`a la congruence $a_1x+a_2y \equiv 0 (\bmod \, z)$. Si $X\geq ZT^4$ par exemple, cela devrait \^etre suffisant. Ici apparait un des apports les plus importants de l'article. On ne doit pas sommer la fonction constante mais une fonction arithm\'etique proche de  $\mu^2$ la fonction indicatrice des entiers sans facteur carr\'e. Or l'estimation de Hooley~\cite{Hoo75}  qui s'\'ecrit 
\begin{equation}\label{estHooely}
\sum_{\substack{1\leq n\leq X\\ n\equiv a \bmod z}}\mu(n)^2=     \frac {6X}{\pi^2 }\prod_{p\mid z} (1-1/p^2)^{-1}+O\Big( \Big( \frac XZ\Big)^{1/2}+ Z^{1/2+\varepsilon}\Big)
\end{equation} 
ne fournit une estimation non triviale que  lorsque $Z\leq X^{2/3-\varepsilon}$. 
Notons que l'estimation~\eqref{estHooely} pr\'ecise le r\'esultat de \cite{P58} et qu'un plus grand domaine d\'efini par $Z\leq X^{25/36-\varepsilon}$ a \'et\'e obtenu dans \cite{MN17}.
Heureusement, des r\'esultats en moyenne obtenus par Le Boudec  \cite{LB18} rendus explicites pour l'occasion (voir Lemme~\ref{lemme Le Boudec}, puis les Lemmes~\ref{lemme M2}, \ref{lemme sumSS}, \ref{lemme sumW}) permettent de surmonter cette difficult\'e pour obtenir des estimations lorsque $X\geq ZT^4$ avec, l\`a encore, $T$   un param\`etre suffisamment grand.

Les oscillations de $\mu(g)$ permettent de se placer dans le cas o\`u $g$ est petit disons $g\leq T $ (voir Lemme \ref{lemme estS5}). Le reste du domaine \`a couvrir est alors suffisamment petit pour conclure. Nous renvoyons aux Lemmes  \ref{lemme estS6} et \ref{lemme estS7} o\`u sont \'etablis des majorations suffisantes.

La   deuxi\`eme section est consacr\'ee \`a l'\'enonc\'e et \`a la d\'emonstration de r\'esultats techniques n\'ecessaires \`a l'estimation de $V_3(h)$. On pourra l'omettre en premi\`ere lecture.
Ces r\'esultats sont soit obtenus par des m\'ethodes classiques, soit cons\'equences des travaux de Le Boudec sur l'\'equir\'epartition des entiers sans facteur carr\'e dans les progressions arithm\'etiques. 
Notons aussi le Lemme \ref{lemme s(T)} qui permet d'estimer des sommes nouvelles dans ce contexte.

 \section{Lemmes techniques}
\subsection{Sommation d'Abel en deux variables et majorations d'int\'egrales}
Lorsque $f$ est une fonction arithm\'etique en deux variables, notons
$$S(x_1,x_2;f):=\sum_{\substack{1\leq n_1\leq x_1\\ 1\leq  n_2\leq x_2}}f(n_1,n_2).$$ Nous \'enon\c cons une formule de  sommation d'Abel de dimension $2.$
\begin{lemma}\label{somdabel}
    Soient $f$ une fonction arithm\'etique en deux variables et $g$ une fonction $C^1$ sur $[0,+\infty[^2.$ Alors pour tout $x_1,x_2\geq 1,$ on a la formule
 \begin{equation}\label{eqsomdabel}
     \begin{split}
S(x_1,\,   &  x_2 ;fg) =g(x_1,x_2)S(x_1,x_2;f)-\int_0^{x_1} S(t_1,x_2;f)\partial_1 g(t_1,x_2)\d t_1\\ &\quad -\int_0^{x_2} S(x_1,t_2;f)\partial_2 g(x_1,t_2)\d t_2+\int_0^{x_1}\int_0^{x_2}  S(t_1,t_2;f)\partial_{1,2}^2 g(t_1,t_2)\d t_1\d t_2.
     \end{split}\end{equation}
De plus lorsque $M$ est de classe $C^1$ sur $[0,+\infty[^2 $ satisfaisant $M(t_1,0)= M(0,t_2)=0$ pour tout $t_1,t_2\geq 0$, nous avons
     \begin{equation}\label{somdabellisse}
     \begin{split}
 (gM) (x_1,x_2)&-\int_0^{x_1} (M \partial_1 g)(t_1,x_2)\d t_1  -\int_0^{x_2}  (M\partial_2 g)(x_1,t_2)\d t_2\cr&+\int_0^{x_1}\int_0^{x_2}   (M\partial_{1,2}^2 g)(t_1,t_2)\d t_1\d t_2 =\int_0^{x_1} \int_0^{x_2}  (g \partial_{1,2}^2 M)(t_1,t_2)\d t_1\d t_2.
     \end{split}\end{equation}
\end{lemma}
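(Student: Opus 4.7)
Le Lemme se scinde naturellement en deux parties : \eqref{somdabellisse} est une identité d'analyse pure, tandis que \eqref{eqsomdabel} en constitue la contrepartie discrète. Je traiterais d'abord la partie lisse, puis j'en déduirais la partie discrète par un choix approprié de $M$, ou bien, de manière plus directe, via la formule de Taylor à deux variables appliquée à $g$.

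Pour établir \eqref{somdabellisse}, mon plan est de partir du membre de droite
$$I:=\int_0^{x_1}\int_0^{x_2}(g\,\partial_{1,2}^2 M)(t_1,t_2)\,\d t_1\,\d t_2$$
et d'effectuer successivement deux intégrations par parties. Une première intégration par parties en la variable $t_1$ dans l'intégrale intérieure, en remarquant que $M(0,t_2)=0$ pour tout $t_2$ entraîne $\partial_2 M(0,t_2)=0$, fournit
$$I = \int_0^{x_2} g(x_1,t_2)\partial_2 M(x_1,t_2)\,\d t_2 - \int_0^{x_1}\int_0^{x_2}(\partial_1 g\cdot\partial_2 M)(t_1,t_2)\,\d t_1\,\d t_2.$$
Une seconde intégration par parties en la variable $t_2$, appliquée à chacun des deux termes et exploitant cette fois les annulations $M(x_1,0)=0$ et $M(t_1,0)=0$ pour faire disparaître les termes de bord inférieurs, fait apparaître exactement les quatre termes du membre de gauche de \eqref{somdabellisse}.

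Pour \eqref{eqsomdabel}, je pars de la formule de représentation
$$g(n_1,n_2) = g(x_1,x_2) - \int_{n_1}^{x_1}\partial_1 g(t_1,x_2)\,\d t_1 - \int_{n_2}^{x_2}\partial_2 g(x_1,t_2)\,\d t_2 + \int_{n_1}^{x_1}\int_{n_2}^{x_2}\partial_{1,2}^2 g(t_1,t_2)\,\d t_1\,\d t_2,$$
valable pour tout $(n_1,n_2)\in[0,x_1]\times[0,x_2]$ et obtenue en appliquant le théorème fondamental du calcul intégral à $g(x_1,x_2)-g(n_1,x_2)$ puis à $g(n_1,x_2)-g(n_1,n_2)$, puis en développant $\partial_2 g(n_1,t_2)=\partial_2 g(x_1,t_2)-\int_{n_1}^{x_1}\partial_{1,2}^2 g(t_1,t_2)\,\d t_1$ pour faire surgir le terme double. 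Il reste à multiplier par $f(n_1,n_2)$, sommer sur $1\leq n_j\leq x_j$, et intervertir sommes et intégrales en réécrivant chaque $\int_{n_j}^{x_j}\cdots\,\d t_j$ sous la forme $\int_0^{x_j}\mathbf{1}_{t_j\geq n_j}\cdots\,\d t_j$ : on reconnaît alors dans chaque terme les sommes partielles $S(t_1,x_2;f)$, $S(x_1,t_2;f)$ et $S(t_1,t_2;f)$, ce qui conduit à \eqref{eqsomdabel}.

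Aucune étape ne présente de véritable difficulté conceptuelle ; le principal point de vigilance consiste à comptabiliser soigneusement les quatre termes de bord lors des deux intégrations par parties dans \eqref{somdabellisse}, en vérifiant que chacune des trois annulations fournies par les hypothèses $M(t_1,0)=M(0,t_2)=0$ (y compris celles de $\partial_1 M$ et $\partial_2 M$ sur les axes correspondants, qu'elles entraînent) est mobilisée au moment voulu pour éliminer exactement le terme de bord qui ne figure pas dans le membre de gauche.
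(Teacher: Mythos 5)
Votre démonstration est correcte. Pour \eqref{somdabellisse}, vous faites exactement ce que l'article indique sans le détailler : deux intégrations par parties, en partant du membre de droite et en utilisant les annulations de $M$ (et donc de $\partial_2 M$) sur les axes pour éliminer les termes de bord inférieurs ; le décompte des termes de bord que vous menez est exact. Pour \eqref{eqsomdabel}, votre route diffère légèrement de celle de l'article : celui-ci itère deux fois la sommation d'Abel unidimensionnelle (d'abord en $n_2$ à $n_1$ fixé, puis en $n_1$), ce qui produit des intégrales sur $[1,x_j]$ qu'il étend ensuite à $[0,x_j]$ en remarquant que $S(t_1,t_2;f)$ s'annule pour $t_j<1$ ; vous partez au contraire de la représentation intégrale de $g(n_1,n_2)$ issue du théorème fondamental du calcul intégral, multipliez par $f$, sommez et intervertissez via les indicatrices $\mathbf{1}_{t_j\geq n_j}$. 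Les deux arguments sont duaux l'un de l'autre et également standards ; le vôtre fait apparaître directement les bornes $0$ et évite l'étape d'extension du domaine d'intégration, au prix d'invoquer d'emblée l'existence de $\partial_{1,2}^2 g$ (hypothèse de toute façon implicite dans l'énoncé, puisque cette dérivée y figure, et que la preuve de l'article utilise aussi).
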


\begin{rem}\label{rem<}
    Ce r\'esultat vaut aussi pour les sommes de la forme 
    $$\sum_{\substack{1\leq n_1< x_1\\ 1\leq  n_2< x_2}}f(n_1,n_2).$$ 
\end{rem}
\begin{proof}
   En appliquant successivement deux fois une sommation d'Abel, nous obtenons
\begin{align*}
S(x_1,x_2;fg)  &=
\sum_{\substack{1\leq n_1\leq x_1 }}
\Big( f(n_1,x_2)g(n_1,x_2)-\int_1^{x_2} 
\sum_{\substack{1\leq n_2\leq t_2 }}f(n_1,n_2)\partial_2 g(n_1,t_2)\d t_2\Big)\cr&=
g(x_1,x_2)S(x_1,x_2;f)-
\int_1^{x_1} S(t_1,x_2;f)\partial_1 g(t_1,x_2)\d t_1
\\ &\quad -\int_1^{x_2} S(x_1,t_2;f)\partial_2 g(x_1,t_2)\d t_2+\int_1^{x_1}\int_1^{x_2}  S(t_1,t_2;f)\partial_{1,2}^2 g(t_1,t_2)\d t_1\d t_2.
\end{align*} Nous d\'eduisons la formule \eqref{eqsomdabel} en remarquant que les sommes $S(t_1,t_2;f)$ sont nulles lorsque $t_1\in [0,1[$ ou $t_1\in [0,1[$.
Nous ne d\'etaillons pas la preuve de \eqref{somdabellisse} qui consiste \`a faire deux int\'egrations par parties.
\end{proof}

Nous introduisons la quantit\'e 
\begin{equation}
    \label{defEh+}E_h^+(t)=\frac{1}{ 1/h+||t||}.
    \end{equation}
\begin{lemma}\label{calculmajI} Lorsque $h\geq 2,$
nous avons
\begin{align*}\int_{-1/2}^{1/2} E_h^+(t)\d t &\ll \log h,\qquad
\int_{-1/2}^{1/2} E_h^+(t_2)E_h^+(t_2+t_1)\d t_2 \ll     E_h^+(t_1)   \log h  
\end{align*}
et
\begin{align*}\int_{-1/2}^{1/2}\int_{-1/2}^{1/2}E_h^+(t_1)E_h^+(t_2)E_h^+(t_2+t_1)  \d t_1\d t_2 &\ll h,\cr
  \int_{-1/2}^{1/2}\int_{-1/2}^{1/2}  t_1 E_h^+(t_1)E_h^+(t_2)E_h^+(t_2+t_1)  \d t_1\d t_2&\ll (\log h)^2,\cr  
\int_{-1/2}^{1/2}\int_{-1/2}^{1/2}  t_1t_2E_h^+(t_1)E_h^+(t_2)E_h^+(t_2+t_1)  \d t_1\d t_2&\ll {\log h}.\end{align*}
\end{lemma}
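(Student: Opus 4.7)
The first estimate is an immediate computation: $\int_{-1/2}^{1/2} \frac{\d t}{1/h + |t|} = 2\log(1 + h/2) \ll \log h$.

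For the second (a convolution-type bound), fix $t_1$ and observe that the triangle inequality gives $|t_2| + |t_1 + t_2| \geq |t_1|$, so at least one of the two is $\geq |t_1|/2$. On the region $\{|t_2| \geq |t_1|/2\}$, one has $E_h^+(t_2) \ll E_h^+(t_1)$, and pulling this factor out leaves $\int E_h^+(t_1 + t_2)\,\d t_2 \ll \log h$ by $1$-periodicity together with the first estimate; the symmetric region $\{|t_1 + t_2| \geq |t_1|/2\}$ is treated identically by pulling out $E_h^+(t_1 + t_2) \ll E_h^+(t_1)$ instead.

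The main technical point is the third estimate: a naive iteration of the second would give $\log h \cdot \int E_h^+(t_1)^2\,\d t_1 \asymp h \log h$, losing a spurious logarithm. My plan is to exploit the full $S_3$-symmetry of the integrand $E_h^+(t_1) E_h^+(t_2) E_h^+(t_3)$ on the hyperplane $t_1 + t_2 + t_3 \in \mathbb Z$ (with $t_3 := -(t_1+t_2)$): up to a factor of $3$ one may restrict to the region where $|t_1|$ is the largest among $|t_1|, |t_2|, \|t_1+t_2\|$. For $t_1 > 0$ this localises to $t_2 \in [-t_1, 0]$, and after the substitution $s = -t_2$ the inner integral becomes
$$\int_0^{t_1} \frac{\d s}{(1/h + s)(1/h + t_1 - s)} = \frac{2\log(1 + h t_1)}{2/h + t_1},$$
evaluated exactly by partial fractions. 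Multiplying by $E_h^+(t_1) \asymp 1/(1/h + t_1)$ and substituting $u = h t_1$ in the outer integral yields
$$\int_0^{1/2} \frac{\log(1 + h t_1)}{(1/h + t_1)^2}\,\d t_1 = h \int_0^{h/2} \frac{\log(1+u)}{(1+u)^2}\,\d u \ll h,$$
by the convergence of $\int_0^\infty \log(1+u)/(1+u)^2\,\d u = 1$.

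The two weighted estimates then follow from the pointwise bound $|t E_h^+(t)| \leq 1$ on $[-1/2,1/2]$ (since $\|t\| = |t|$ there). For the weight $t_1$, this bounds the double integral by $\int\!\int E_h^+(t_2) E_h^+(t_1 + t_2)\,\d t_1\d t_2$; applying the second estimate to the inner integral and then the first gives $(\log h)\int E_h^+(t_1)\,\d t_1 \ll (\log h)^2$. For the weight $t_1 t_2$, applying the pointwise bound twice reduces to $\int\!\int E_h^+(t_1 + t_2)\,\d t_1\d t_2$; changing variable $u = t_1 + t_2$ at fixed $t_2$ and using the $1$-periodicity of $E_h^+$ gives $\log h \cdot 1 = \log h$. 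The only real obstacle in the whole lemma is the three-fold symmetry argument for the unweighted 2D integral; once that is in place, the rest is bookkeeping.
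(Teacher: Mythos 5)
Your proof is correct, and it is considerably more complete than what the paper records: the paper's own proof consists only of the remark that the second bound follows from splitting into $|t_2|\leq \tfrac12\|t_1\|$ and $|t_2|>\tfrac12\|t_1\|$ and using the first bound, with "no further details" for the three double integrals. Your treatment of the first two estimates coincides with that hint. For the crucial unweighted double integral you correctly identify that iterating the convolution bound loses a factor $\log h$, and your fix --- the $S_3$-symmetry of $E_h^+(t_1)E_h^+(t_2)E_h^+(t_3)$ on $t_1+t_2+t_3\in\Z$, reduction to the sector where $\|t_1\|$ is maximal, and the exact partial-fraction evaluation $\int_0^{t_1}\frac{\d s}{(1/h+s)(1/h+t_1-s)}=\frac{2\log(1+ht_1)}{t_1+2/h}$ followed by the substitution $u=ht_1$ --- is a clean and valid way to get the sharp bound $\ll h$. (An equally standard alternative, closer in spirit to the stated second estimate, is to prove the refined convolution bound $\int E_h^+(t_2)E_h^+(t_1+t_2)\,\d t_2\ll E_h^+(t_1)\log(2+h\|t_1\|)$ and then integrate $E_h^+(t_1)^2\log(2+h\|t_1\|)$ directly; both routes give $\ll h$.) The two weighted estimates via $|t\,E_h^+(t)|\leq 1$ on $[-1/2,1/2]$ and periodicity are exactly right.

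One small imprecision to repair: for $0<t_1\leq \tfrac12$, the region where $\|t_1\|$ is the largest of the three is \emph{not} exactly $\{t_2\in[-t_1,0]\}$. There is an additional wrap-around piece, namely $t_1\geq\tfrac13$ and $t_2\in[1-2t_1,t_1]$, where $\|t_1+t_2\|=1-t_1-t_2\leq t_1$. On that piece $E_h^+(t_1)\asymp 1$, and the second estimate bounds its contribution by $O(\log h)$, which is absorbed into $O(h)$; but the set should be accounted for rather than silently dropped. With that sentence added, the argument is complete.
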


\begin{proof}
Pour d\'emontrer la deuxi\`eme majoration, nous distinguons le cas $|t_2|\leq \tfrac12|| t_1|| $ du cas $|t_2|> \tfrac12|| t_1|| $ et nous utilisons
$\int_{-1/2}^{1/2} E_h^+(t)\d t  \ll \log h$. Nous n'indiquons pas plus de d\'etails.
\end{proof}


\subsection{Estimations de sommes de fonctions arithm\'etiques}

Posons
$$\sigma(X,q):=\min\Big\{ \frac{X}{q}+(\log 3X)^2,\Big(\frac{X}{q}+1\Big)(\log_2 3X)^2\Big\}.$$
Nous consid\'erons
\begin{equation}\label{notationS}S(X;q,a,m):=\sum_{\substack{ 1\leq n\leq X\\ n\equiv a\bmod q\\ (n, m)=1}}\frac{\mu(n)^2n^2}{\phi (n)^2},\qquad S_{\mu^2}(X;q,a,m):=\sum_{\substack{1\leq n\leq X\\ n\equiv a\bmod q\\ (n, m)=1}} {\mu(n)^2 } ,\end{equation}
\begin{equation}\label{notationS*}S(X;q):=\sum_{\substack{ 1\leq n\leq X\\ (n, q)=1}}\frac{\mu(n)^2n^2}{\phi (n)^2},\qquad S_{\mu^2}(X;q):=\sum_{\substack{ 1\leq n\leq X\\ (n, q)=1}} {\mu(n)^2 } .\end{equation}
\begin{lemma}\label{lemme estS}
    Soient $a,q,m\in \mathbb N$ tels que $(a,qm)=1$, $(q,m)=1$ et $X\geq 1$. Nous avons
\begin{equation}\label{majS}S(X;q,a,m) \ll \sigma(X,q).\end{equation}
\end{lemma}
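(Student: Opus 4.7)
Mon plan est d'\'etablir s\'epar\'ement les deux bornes figurant dans la d\'efinition du minimum $\sigma(X,q)$, puis de prendre leur minimum.

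Pour la borne $(X/q+1)(\log_2 3X)^2$, je m'appuierais sur la majoration classique $n/\phi(n)\ll \log_2(3n)$ valable pour tout entier $n\geq 1$. Elle entra\^\i ne $\mu(n)^2n^2/\phi(n)^2\ll (\log_2 3X)^2$ pour tout $n\leq X$, et comme le nombre d'entiers $n\in [1,X]$ v\'erifiant $n\equiv a\bmod q$ est au plus $X/q+1$, on obtient aussit\^ot
$$S(X;q,a,m)\ll \Big(\frac{X}{q}+1\Big)(\log_2 3X)^2.$$

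Pour la borne $X/q+(\log 3X)^2$, l'id\'ee cl\'e est d'utiliser la d\'ecomposition multiplicative $n^2/\phi(n)^2=\sum_{d\mid n}g(d)$ valable sur les entiers sans facteur carr\'e, o\`u $g$ est la fonction multiplicative support\'ee sur ces entiers et d\'efinie par $g(p)=p^2/(p-1)^2-1=(2p-1)/(p-1)^2$. On observe que $g(p)=O(1/p)$ et donc $g(p)/p=O(1/p^2)$. En intervertissant les sommations et en posant $n=dk$ (avec $(d,k)=1$ puisque $n$ est sans facteur carr\'e), puis en utilisant $(a,q)=1$, on obtient
$$S(X;q,a,m)=\sum_{\substack{d\leq X,\ \mu(d)^2=1\\ (d,qm)=1}}g(d)\sum_{\substack{k\leq X/d,\ (k,dm)=1\\ k\equiv ad^{-1}\bmod q}}\mu(k)^2.$$
En majorant trivialement la somme int\'erieure par $X/(dq)+1$, il vient
$$S(X;q,a,m)\leq \frac{X}{q}\sum_{d\geq 1}\frac{\mu(d)^2g(d)}{d}+\sum_{d\leq X}\mu(d)^2g(d)\ll \frac{X}{q}+(\log 3X)^2,$$
la premi\`ere s\'erie convergeant absolument (puisque $g(p)/p=O(1/p^2)$) et la seconde \'etant major\'ee par $\prod_{p\leq X}(1+g(p))=\prod_{p\leq X}(p/(p-1))^2\ll (\log 3X)^2$ d'apr\`es le th\'eor\`eme de Mertens.

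En prenant le minimum des deux bornes, on obtient la majoration $S(X;q,a,m)\ll \sigma(X,q)$ annonc\'ee. L'argument est essentiellement routinier; l'unique \'el\'ement astucieux est le choix de la d\'ecomposition multiplicative de $n^2/\phi(n)^2$ adapt\'ee \`a la sommation en progression arithm\'etique, et aucune difficult\'e technique s\'erieuse n'est \`a pr\'evoir.
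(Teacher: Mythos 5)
Votre preuve est correcte et suit essentiellement la m\^eme strat\'egie que celle de l'article : la seconde borne par $n/\phi(n)\ll\log_2(3n)$, et la premi\`ere par un d\'eveloppement de $n^2/\phi(n)^2$ en somme sur les diviseurs (l'article utilise le majorant $\sum_{k\mid n}\mu^2(k)2^{\omega(k)}/k$ l\`a o\`u vous utilisez l'identit\'e exacte avec $g(p)=(2p-1)/(p-1)^2$, ce qui revient au m\^eme), suivie d'une interversion des sommations et de la majoration triviale $X/(dq)+1$ du comptage int\'erieur. Les deux arguments concluent de fa\c con identique via la convergence de la s\'erie en $g(d)/d$ et le th\'eor\`eme de Mertens.
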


\begin{proof}[D\'emonstration du Lemme~\ref{lemme estS}] La majoration par le second terme du maximum d\'ecoule de la majoration $n/\phi(n)\ll \log_2 (3n)$.

Pour \'etablir la majoration par le premier terme, nous \'ecrivons
$$\frac{n^2}{\phi (n)^2}\ll \sum_{k\mid n} \frac{\mu^2(k)2^{\omega(k)}}{k}.$$ La somme \`a majorer est donc
$$\ll\sum_{\substack{k\leq X\\ (k,q)=1 }} \frac{\mu^2(k)2^{\omega(k)}}{k} \sum_{\substack{n\equiv a\bmod q\\ k\mid n\\ 1\leq n\leq X}}1
\ll\sum_{\substack{k\leq X\\ (k,q)=1 }} \frac{\mu^2(k)2^{\omega(k)}}{k} \Big( \frac{X}{qk}+1\Big),$$ ce qui fournit ais\'ement la majoration annonc\'ee.  
\end{proof}


Nous rappelons aussi le r\'esultat suivant dont nous n'indiquons pas la preuve.

\begin{lemma}\label{lemme sum xi} Soient $\xi$ une fonction arithm\'etique et $M_\xi\geq 0$ tels que 
$$\sum_{n=1}^\infty \frac{|\xi(n)|}{n^{1/2}}\leq M_\xi.$$
    Lorsque $T\geq 1, $ nous avons
$$\sum_{T\leq n<2T} \frac{(1*\xi)(n)}{n}=\log 2\sum_{n=1}^\infty \frac{ \xi(n) }{n}+O\Big(\frac{M_\xi}{T^{1/2}}\Big) $$
et
$$\sum_{  1\leq n\leq T}  {(1*\xi)(n)} =T\sum_{n=1}^\infty \frac{ \xi(n) }{n}+O\big( {M_\xi T^{1/2}}\big).$$
\end{lemma}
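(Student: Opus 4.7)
The strategy is to expand $(1*\xi)(n) = \sum_{d \mid n}\xi(d)$, interchange the order of summation, and control both the truncation of the outer sum and the tail beyond $T$ via the two elementary inequalities $1 \leq (T/d)^{1/2}$ (for $d \leq T$) and $1/d \leq 1/(d^{1/2}T^{1/2})$ (for $d > T$). Both reduce the relevant sum to $M_\xi$ multiplied by a factor $T^{1/2}$, which matches the claimed error terms.

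For the second identity, I start with
$$\sum_{1 \leq n \leq T}(1*\xi)(n) = \sum_{d \leq T}\xi(d)\lfloor T/d\rfloor = T\sum_{d \leq T}\frac{\xi(d)}{d} + O\Big(\sum_{d \leq T}|\xi(d)|\Big).$$
The error term is bounded by $T^{1/2}\sum_{d \leq T}|\xi(d)|/d^{1/2} \leq M_\xi T^{1/2}$. Extending the remaining sum to infinity produces a tail $T\sum_{d > T}\xi(d)/d$, whose modulus is at most $T^{1/2}\sum_{d > T}|\xi(d)|/d^{1/2} \leq M_\xi T^{1/2}$. Combining these gives exactly the stated asymptotic.

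For the first identity, the same interchange produces
$$\sum_{T \leq n < 2T}\frac{(1*\xi)(n)}{n} = \sum_{d}\frac{\xi(d)}{d}\sum_{T/d \leq m < 2T/d}\frac{1}{m}.$$
For $d \leq T$ the inner sum equals $\log 2 + O(d/T)$ by comparison with $\int_{T/d}^{2T/d}\d u/u$; for $T \leq d < 2T$ only $m = 1$ contributes and it equals $1$; for $d \geq 2T$ it vanishes. Collecting pieces, the main term is $\log 2\sum_{d \leq T}\xi(d)/d$, with residual errors $\sum_{d \leq T}|\xi(d)|/T$ and $\sum_{T \leq d < 2T}|\xi(d)|/d$, each $\leq M_\xi/T^{1/2}$ by the same two inequalities. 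Extending the main sum to $d \geq 1$ contributes a further tail $\sum_{d > T}|\xi(d)|/d \leq M_\xi/T^{1/2}$, which completes the argument.

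There is essentially no obstacle: the whole proof is a double interchange of summation followed by a splitting of the resulting sum at $d = T$, the only care being to balance $d^{1/2}$ against $T^{1/2}$ in both regimes so that the hypothesis on $\sum_n |\xi(n)|/n^{1/2}$ can be applied cleanly.
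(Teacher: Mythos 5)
Votre preuve est correcte. Le texte de l'article ne donne d'ailleurs aucune d\'emonstration de ce lemme (\og dont nous n'indiquons pas la preuve \fg), et votre argument --- d\'evelopper $(1*\xi)(n)=\sum_{d\mid n}\xi(d)$, intervertir les sommations, puis contr\^oler la troncature en $d\leq T$ et la queue $d>T$ via $1\leq (T/d)^{1/2}$ et $1/d\leq 1/(d^{1/2}T^{1/2})$ --- est exactement la d\'emonstration \'el\'ementaire standard attendue ici. Seule remarque cosm\'etique : les plages $d\leq T$ et $T\leq d<2T$ se recouvrent en $d=T$; il vaut mieux d\'ecouper en $d<T$, $T\leq d<2T$, $d\geq 2T$ (la contribution de $d=T$ \'etant de toute fa\c con absorb\'ee par le terme d'erreur).
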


Nous posons
\begin{equation}
    \label{defpsi}
\psi_{\alpha}(m):=\prod_{p\mid m} \Big( 1+\frac1{p^\alpha}\Big),\quad \phi_{\alpha}(m):=\prod_{p\mid m} \Big( 1-\frac{\alpha}{p}\Big)\qquad (\alpha\in\mathbb R).\end{equation}
Nous notons 
$${\cal L}(X):=\exp\sqrt{\log X},$$ et rappelons la notation \eqref{notationS*}.

\begin{lemma}\label{lemme est S*}
   Lorsque $ m \in \mathbb N$ et  $X\geq 1$, nous avons
\begin{equation}\label{est S*}
S_{\mu^2}(X;m)=       \frac {6X}{\pi^2 \varphi_{-1} (m)}+O\big(  
\psi_{1/3}(m)^2X^{1/2}{\cal L}(X)^{-1}\big).  
\end{equation}
\end{lemma}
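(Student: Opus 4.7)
The plan starts from the Dirichlet-series factorisation
\begin{equation*}
\sum_{n\geq 1}\frac{\mu^2(n)\mathbf 1_{(n,m)=1}}{n^s}=\frac{\zeta(s)}{\zeta(2s)}\prod_{p\mid m}\frac{1}{1+p^{-s}}.
\end{equation*}
Equivalently, the identity $\mu^2(n)=\sum_{d^2\mid n}\mu(d)$ together with the coprimality constraint yields
\begin{equation*}
S_{\mu^2}(X;m)=\sum_{\substack{d\leq\sqrt X\\(d,m)=1}}\mu(d)\,T_m(X/d^2),\qquad T_m(Y):=\#\{k\leq Y:(k,m)=1\}.
\end{equation*}
Extending the outer sum to all $d$ coprime to $m$ (the tail being handled by the classical M\"obius bound, which already produces a saving of order $\mathcal L(X)^{-1}$) and using $T_m(Y)=Y\varphi(m)/m+O(2^{\omega(m)})$, the leading contribution equals the residue of the Dirichlet series at $s=1$:
\begin{equation*}
\frac{X\varphi(m)}{m}\sum_{(d,m)=1}\frac{\mu(d)}{d^2}=\frac{X}{\zeta(2)}\prod_{p\mid m}\frac{p}{p+1}=\frac{6X}{\pi^2\phi_{-1}(m)}.
\end{equation*}

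For the error, the trivial bound $O(2^{\omega(m)}\sqrt X)$ is insufficient both in size and in $m$-dependence. I would pass to Perron's formula and shift the contour from $\mathrm{Re}(s)=1+\varepsilon$ down to $\mathrm{Re}(s)=1/2+\delta$ for some small $\delta>0$; the residue at $s=1$ reproduces the main term. On the shifted line the classical zero-free region of $\zeta$ (the one underlying $\sum_{n\leq X}\mu(n)\ll X\mathcal L(X)^{-c}$) contributes a saving of order $\mathcal L(X)^{-1}$, while the $m$-dependent factor is bounded by
\begin{equation*}
\Big|\prod_{p\mid m}(1+p^{-s})^{-1}\Big|\leq\prod_{p\mid m}\bigl(1-p^{-1/2-\delta}\bigr)^{-1}\ll\prod_{p\mid m}(1+p^{-1/3})^2=\psi_{1/3}(m)^2,
\end{equation*}
the last comparison being an elementary prime-by-prime check valid with an absolute implicit constant.

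The main obstacle is matching precisely the $m$-dependence $\psi_{1/3}(m)^2$, which is genuinely sharper than $2^{\omega(m)}$ on many moduli since $(1+p^{-1/3})^2<2$ as soon as $p\geq 17$. Consequently this factor cannot emerge from a naive floor-function estimate; it must be traced back to the critical-line analysis of the Euler product $\prod_{p\mid m}(1+p^{-s})^{-1}$. A fully elementary alternative bypassing Perron is to split the $d$-sum at $D\asymp\sqrt X/\mathcal L(X)$: the tail $d>D$ is handled via a PNT-cancellation of the shape $\sum_{d\leq B,\,(d,m)=1}\mu(d)\ll B\tau(\mathrm{rad}\,m)\mathcal L(B)^{-1}$ (after a M\"obius inversion on $(d,m)=1$), while the head $d\leq D$ is processed by inclusion-exclusion over divisors of $m$ weighted by $d^{-1/3}$, whose total sum reproduces $\psi_{1/3}(m)^2$ as a natural by-product. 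Balancing the two contributions yields the stated bound $O(\psi_{1/3}(m)^2 X^{1/2}\mathcal L(X)^{-1})$.
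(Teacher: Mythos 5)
Your main route has a genuine gap: shifting Perron's contour to $\Re(s)=1/2+\delta$ cannot produce an error term of size $X^{1/2}\mathcal{L}(X)^{-1}$. On that line the integrand contains $\zeta(s)$ with $|\zeta(1/2+\delta+it)|\gg$ a positive power of $|t|$, so after truncating Perron at height $T$ you are left with a contribution of order $X^{1/2+\delta}T^{1/4}$ plus a truncation error of order $X^{1+\varepsilon}/T$; optimising gives an exponent strictly larger than $1/2$. Reaching (let alone refining) the exponent $1/2$ for squarefree counting is not accessible by this contour shift --- it requires the elementary convolution structure $\mu^2=\sum_{d^2\mid n}\mu(d)$ together with cancellation in $\sum_{d\leq B}\mu(d)$, which is exactly what your ``elementary alternative'' invokes. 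That fallback is the right idea, but as written it also has two unresolved points: (i) the tail bound you quote, $\sum_{d\leq B,\,(d,m)=1}\mu(d)\ll B\,\tau(\mathrm{rad}\,m)\,\mathcal{L}(B)^{-1}$, carries a factor $2^{\omega(m)}$ which is \emph{not} dominated by $\psi_{1/3}(m)^2$ (as you yourself observe, $(1+p^{-1/3})^2<2$ for $p\geq 17$), so propagating it gives a strictly weaker error term than the one claimed; (ii) the assertion that the inclusion--exclusion over divisors of $m$ ``reproduces $\psi_{1/3}(m)^2$ as a natural by-product'' is precisely the computation that needs to be done, and it is not done.

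For comparison, the paper avoids both difficulties by decoupling the coprimality condition from the squarefree condition at the outset: it uses the Liouville convolution $\mu^2(n)1_{(n,m)=1}=\sum_{d\mid(n,m^\infty)}\lambda(d)\mu^2(n/d)$, hence $S_{\mu^2}(X;m)=\sum_{d\mid m^\infty}\lambda(d)S_{\mu^2}(X/d;1)$, then applies the classical $m=1$ asymptotic (with the $\mathcal{L}$-saving coming from $\sum_{n\leq X}\mu(n)\ll X\mathcal{L}(X)^{-3}$) for $d\leq\sqrt X$ and the trivial bound $X/d$ beyond. The factor $\psi_{1/3}(m)^2$ then appears cleanly as $\sum_{d\mid m^\infty}d^{-1/3}\ll\psi_{1/3}(m)\psi_{2/3}(m)\ll\psi_{1/3}(m)^2$ via Rankin's trick. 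If you want to salvage your version, replace the Perron argument by this reduction to $m=1$, or else redo your tail estimate for $\sum_{d\leq B,(d,m)=1}\mu(d)$ by writing $\mu(d)1_{(d,m)=1}$ as a convolution of $\mu$ with the indicator of $e\mid m^\infty$, which yields an $m$-dependence of the form $\psi_\alpha(m)^{O(1)}$ rather than $2^{\omega(m)}$.
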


\begin{rem}
    Le facteur suppl\'ementaire ${\cal L}(X)^{-1}$ dans le terme d'erreur est crucial pour notre application. La valeur $\tfrac13$ dans le terme d'erreur peut \^etre ais\'ement remplac\'ee par tout r\'eel $<\tfrac12.$
\end{rem}

\begin{proof} Par souci de compl\'etude, nous indiquons les id\'ees de la preuve qui sont tr\`es classiques. 
Lorsque $m=1$, ce r\'esultat est bien connu et peut \^etre obtenu \`a partir d'une application de la m\'ethode de l'hyperbole compte-tenu de la relation de convolution $\mu^2(n)=\sum_{d^2\mid n } \mu(d)$ et d'une cons\'equence du {\it Th\'eor\`eme des nombres premiers} qui s'\'ecrit  $$
\sum_{n\leq X} \mu(n)\ll X{\cal L}(X)^{-3}.$$ 
Soit  $\lambda$ la fonction de Liouville. La relation de convolution 
\begin{equation}    \label{identiteconvolution}\mu^2(n) 1_{(n,m)=1}=\sum_{d\mid (n,m^\infty)} \lambda(d) \mu^2 (n/d), \end{equation}
fournit
$$S_{\mu^2}(X;m)=\sum_{d\mid m^\infty}\lambda(d)S_{\mu^2}(X/d;1).$$ Nous utilisons alors le r\'esultat obtenu lorsque $d\leq \sqrt{X}$ alors que lorsque  $  \sqrt{X}<d\leq X$ la majoration triviale $\leq X/d$ suffit. Nous avons
\begin{equation}    \label{majsumd}\sum_{d\mid  m^\infty } \frac1{d^{1/2}} \ll \psi_{1/2}(m)^2,\qquad \sum_{d\mid  m^\infty } \frac1{d^{1/3}}\ll \psi_{1/3}(m) \psi_{2/3}(m)\ll \psi_{1/3}(m)^2.\end{equation} 
La seconde majoration de \eqref{majsumd} et l'astuce de Rankin fournissent alors le r\'esultat recherch\'e. Nous n'indiquons pas plus de d\'etails.
\end{proof}
\subsection{Majoration de Le Boudec rendue explicite et ses cons\'equences}

Nous rappelons les notations \eqref{notationS} de $S_{\mu^2}$ et $S.$  Notons \begin{equation}\label{notationE}\begin{split}
 E_{\mu^2}(X;q,a,m)&:=S_{\mu^2}(X;q,a,m)-  \frac {6X}{\pi^2\varphi(q)\varphi_{-1} (mq)} ,\cr
 E(X;q,a,m)&:=S(X;q,a,m)-\frac {6X}{\pi^2\varphi(q)\varphi_{-1} (mq)}
\prod_{p\nmid mq}\Big( 1+\frac{ 2-1/p}{ (p-1)^2(1+1/p)   }\Big),\end{split}\end{equation}
et 
\begin{equation}\label{notationM2}{\cal M}_{2,\mu^2}(X,q,m):=\sum_{\substack{1\leq a\leq q\\ (a,q)=1}} E_{\mu^2}(X;q,a,m)^2 ,\quad {\cal M}_{2}(X,q,m):=\sum_{\substack{1\leq a\leq q\\ (a,q)=1}} E (X;q,a,m)^2 .\end{equation}

Notons que lorsque $q$ un nombre premier v\'erifie $X^{5/11+\varepsilon}\leq q\leq X$ et $X/q$ tend vers l'infini, il existe des estimations asymptotiques de 
$E_{\mu^2}(X;q,1,1)$ (cf. \cite{GMRR21}).
Nous rendons explicite la majoration du th\'eor\`eme  1 de  Le Boudec \cite{LB18} valable pour tout entier $q$ qui est une am\'elioration de celles de  Blomer~\cite{Blo08} et de Nunes \cite{MN14}.

\begin{lemma}[\cite{LB18}]\label{lemme Le Boudec}
    Lorsque $1\leq q\leq X$ tel que $\mu(q)^2=1$, nous avons uniform\'ement 
    \begin{equation}
    \label{leboudecborne*}
{\cal M}_{2,\mu^2}(X,q,1)
\ll
(\log X)^5   
\Big(2^{\omega(q)} X^{1/2}q^{1/2}+ 4^{\omega(q)}\frac{X }{q^{1/2}}\Big) 
\ll
X(\log 2 X)^5  2^{\omega(q)} 
.\end{equation}
\end{lemma}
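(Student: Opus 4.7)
The plan is to revisit the proof of Theorem~1 of \cite{LB18} while extracting the explicit dependence on $\omega(q)$ that Le Boudec absorbs into a factor $X^\varepsilon$. Every time his argument uses $\tau(q)\ll X^\varepsilon$ we replace it by the equality $\tau(q)=2^{\omega(q)}$ valid for squarefree~$q$, and every time his argument uses $X^\varepsilon$ to bound a divisor sum over integers dividing~$q$, we replace that too by a power of $2^{\omega(q)}$.

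First I would expand the second moment. Orthogonality gives
\[
{\cal M}_{2,\mu^2}(X,q,1) = \!\!\!\sum_{\substack{1\leq n_1,n_2\leq X\\ (n_1n_2,q)=1\\ n_1\equiv n_2\bmod q}}\!\!\!\mu^2(n_1)\mu^2(n_2) \;-\; 2M\,S_{\mu^2}(X;1,1,q) \;+\; M^2\varphi(q),
\]
where $M:=6X/(\pi^2\varphi(q)\varphi_{-1}(q))$. By Lemma~\ref{lemme est S*}, the diagonal contribution $n_1=n_2$ combined with the subtracted main terms cancels up to an admissible error, so what remains is the off-diagonal count $\#\{(n_1,n_2):n_i\leq X,\ \mu^2(n_i)=1,\ (n_i,q)=1,\ n_2=n_1+kq\}$ summed over $1\leq|k|\leq X/q$.

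Inserting $\mu^2(n_i)=\sum_{d_i^2\mid n_i}\mu(d_i)$, I would partition the $(d_1,d_2)$-sum at the threshold $Y:=(X/q)^{1/2}$. The ``both small'' region reproduces the main term and cancels (the conditions $(d_i,q)=1$ being extracted via M\"obius at cost $2^{\omega(q)}$ per variable). In each of the three remaining regions the congruence becomes the equation $d_2^2 m_2=d_1^2 m_1+kq$, so for fixed $d_1,m_1,k$ the number of admissible $(d_2,m_2)$ is bounded by $\tau(d_1^2 m_1+kq)$. Summing via the standard estimate $\sum_{n\leq N}\tau(n)^r\ll N(\log N)^{2^r-1}$ produces the two contributions $2^{\omega(q)}X^{1/2}q^{1/2}(\log X)^5$ (mixed small/large) and $4^{\omega(q)}Xq^{-1/2}(\log X)^5$ (both large); the extra powers of $2^{\omega(q)}$ arise from removing coprimality constraints and from divisor sums restricted to~$q$.

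The main obstacle is the precise bookkeeping of the $q$-dependence: each Cauchy--Schwarz step, each divisor-sum bound, and each inclusion--exclusion on $(n,q)=1$ contributes a power of $2^{\omega(q)}$ that must be traced rather than hidden inside $X^\varepsilon$, and one must also verify that no extra factor of $\tau(q)^{O(1)}$ sneaks in through the Fourier or sieve machinery. Once the two terms are extracted, the consolidated bound $X(\log 2X)^5\,2^{\omega(q)}$ follows term-by-term: the first uses only $q\leq X$, while for the second one separates $q$ with many prime factors (where $4^{\omega(q)}\leq q\cdot 2^{\omega(q)}$) from $q$ with few prime factors (where the overall bound is trivial via $|E_{\mu^2}|\ll X^{1/2}$ and $\varphi(q)\leq q$).
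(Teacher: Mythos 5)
There is a genuine gap, and it sits exactly where the whole difficulty of Le Boudec's theorem lies. Your opening reduction (expanding the square, isolating the diagonal and the off-diagonal count of pairs $n_1\equiv n_2\bmod q$) is fine, but note already that the diagonal is $\asymp X$ while the target bound is $\ll X^{\varepsilon}(X^{1/2}q^{1/2}+X/q^{1/2})$, which is $o(X)$ for $1\ll q\ll X$: the diagonal can only be cancelled by a \emph{secondary main term} of the off-diagonal count, so the off-diagonal sum must be evaluated asymptotically with an error that saves a power of $X$ over the trivial bound $X^{2}/q$, uniformly in $q\leq X$. Your proposed mechanism --- fixing $(d_1,m_1,k)$ and bounding the number of $(d_2,m_2)$ with $d_2^2m_2=d_1^2m_1+kq$ by $\tau(d_1^2m_1+kq)$, then invoking $\sum_{n\leq N}\tau(n)^r\ll N(\log N)^{2^r-1}$ --- cannot deliver this: there are $\asymp X^2/q$ admissible triples $(d_1,m_1,k)$, so this route gives at best $X^2(\log X)^{O(1)}/q$, which exceeds $X/q^{1/2}$ by a factor $\geq X^{1/2}$ throughout the range. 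No amount of bookkeeping of $2^{\omega(q)}$ factors repairs this; the loss is structural, not a matter of tracking $X^\varepsilon$.

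The actual proof (and the paper's) does not re-derive the estimate elementarily. The paper only performs a ``diff'' against the proof of Proposition 1 of \cite{LB18}: it substitutes explicit versions of Lemmas 2 and 9 of \cite{LB14} (with a correction to Lemma 2, adding a factor $\tau(q)\phi(q)/q$ to $E_1(q)$) into Lemmas 2 and 3 of \cite{LB18}, replacing each $q^{\varepsilon}$ by explicit quantities of the form $\tau(q)$, $2^{\omega(q)}$, $(q/\phi(q))^{k}$ and $(\log 2q)^2$, keeps the same choice of the parameter $y$, and uses the uniform bound $\mathrm{card}\{1\leq\rho\leq q:\rho^2\equiv a\bmod q\}\ll 2^{\omega(q)}$. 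That last ingredient is the heart of the matter: after writing $n=d^2m$, the congruence $d^2m\equiv a\bmod q$ confines $d$ to $\ll 2^{\omega(q)}$ residue classes modulo $q$, and it is the bilinear/lattice-point analysis of \cite{LB14}--\cite{LB18} built on this observation that produces the saving $X^{1/2}q^{1/2}+X/q^{1/2}$. None of this machinery appears in your sketch, so the two displayed contributions (and in particular the exponent $5$ of $\log X$) are asserted rather than derived. Your final consolidation step, by contrast, is essentially correct (both terms are $\ll X\,2^{\omega(q)}$ using $q\leq X$ and $2^{\omega(q)}\ll q^{1/2}$), but it rests on a bound you have not established.
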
 

 En premi\`ere lecture, la preuve de ce lemme pourra \^etre omise.
 
\begin{proof}  Nous nous contentons d'indiquer les changements \`a faire par rapport \`a la d\'emonstration de la proposition 1 de \cite{LB18}. Les notations utilis\'ees ici sont celles de \cite{LB18}. 
Les lemmes 2 et 3 de \cite{LB18} ont une version explicite \'etablie dans \cite{LB14}. Comme nous l'a indiqu\'e Shuntaro Yamagishi, il faut rajouter au lemme 2 de \cite{LB14} un facteur $\tau(q)\phi(q)/q $ \`a $E_1(q)$ alors que le lemme 9 de \cite{LB14} est juste sous la forme publi\'ee. D'apr\`es le lemme 2 de \cite{LB14} ainsi corrig\'e,  on peut remplacer  le majorant  $q^\varepsilon M(q,a_1,a_2)$ dans le lemme 2 de \cite{LB18} par $
M(q,a_1,a_2)+(q/\phi(q))^2(\log 2q)^2\tau(q)$. 
D'apr\`es le lemme 9 de~\cite{LB14},  on peut remplacer le majorant $q^\varepsilon (F_1F_2+q)$ dans le lemme~3 de~\cite{LB18} par $$2^{\omega(q)}(q/\phi(q))^3(\log 2q)^2 (\tau(q)F_1F_2+q) .$$
La modification de l'\'enonc\'e du lemme 2 de \cite{LB18} n'influe pas sur le r\'esultat.
Il suffit alors de suivre la d\'emonstration de la proposition 1 de \cite{LB18} en choisissant la m\^eme valeur du param\`etre $y$ et en utilisant la majoration
$${\rm card}\{ 1\leq \rho \leq q : \rho^2\equiv a \bmod q\}\ll 2^{\omega(q)}$$ uniforme pour $(a,q)=1$ et la borne $\big(  {q}/{\phi(q)}\big)^{3} (\log 2q)^2\ll (\log 2X)^3.$ Nous ne fournissons pas plus de d\'etails.
\end{proof}

%
 
Nous montrons le m\^eme type de majoration pour tous les ${\cal M}_{2,\mu^2}(X,q,m)$ puis pour
${\cal M}_{2}(X,q,m).$
Nous rappelons la d\'efinition \eqref{defpsi} de $\psi_\alpha$.

\begin{lemma}\label{lemme M2}  
    Lorsque $(q,m)=1$ et $1\leq q\leq X$ tel que $\mu(q)^2=1$, nous avons
    \begin{equation}\label{majM2mu2m}
{\cal M}_{2,\mu^2}(X,q,m)\ll  \psi_{1/2}(m)^4 X(\log 2X)^5  2^{\omega(q)}
    \end{equation}
et
\begin{equation}
{\cal M}_{2}(X,q,m)
 \ll  \psi_{1/2}(m)^4  
 X(\log 2X)^5  2^{\omega(q)}.
    \end{equation}
\end{lemma}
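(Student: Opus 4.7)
The plan is to reduce both bounds to Lemma~\ref{lemme Le Boudec} (the case $m=1$, without the weight $n^2/\varphi(n)^2$) by means of two convolution identities, in the spirit of the proof of Lemma~\ref{lemme est S*}.

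Step 1 (suppression of the coprimality to $m$). Applying the identity \eqref{identiteconvolution}, and noting that $(m,q)=1$ forces $(d,q)=1$ for every $d\mid m^\infty$, the congruence $n\equiv a\pmod q$ with $n=dk$ becomes $k\equiv a\bar d\pmod q$. Hence
$$S_{\mu^2}(X;q,a,m)=\sum_{\substack{d\mid m^\infty\\ d\leq X}}\lambda(d)\,S_{\mu^2}(X/d;q,a\bar d,1).$$
A direct calculation using $\sum_{d\mid m^\infty}\lambda(d)/d=\varphi_{-1}(q)/\varphi_{-1}(mq)$ shows that the main terms match up to a tail $\ll X\varphi(q)^{-1}\sum_{d\mid m^\infty,\,d>X}1/d$, which is absorbed into the announced error. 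Squaring, expanding, and applying Cauchy--Schwarz in $a$ gives
$${\cal M}_{2,\mu^2}(X,q,m)\leq \Bigl(\sum_{d\mid m^\infty,\,d\leq X}{\cal M}_{2,\mu^2}(X/d,q,1)^{1/2}\Bigr)^{2}.$$
Bounding each factor with Lemma~\ref{lemme Le Boudec} (plus a trivial bound in the range $X/d\leq q$, where $|E_{\mu^2}(Y;q,a,1)|\leq 1+6Y/(\pi^{2}\varphi(q)\varphi_{-1}(q))$) and summing by means of the estimate $\sum_{d\mid m^\infty}d^{-1/2}\leq\psi_{1/2}(m)^{2}$ from \eqref{majsumd} yields \eqref{majM2mu2m}.

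Step 2 (passage to the weight $n^{2}/\varphi(n)^{2}$). Write $n^{2}/\varphi(n)^{2}=(1\ast g)(n)$ with $g$ multiplicative, supported on squarefree integers, and $g(p)=(2p-1)/(p-1)^{2}$. Then
$$S(X;q,a,m)=\sum_{\substack{d\geq 1\\ (d,mq)=1}}\mu^{2}(d)g(d)\,S_{\mu^2}(X/d;q,a\bar d,md).$$
A computation identical in spirit to Step~1 shows that the combined main term equals $\tfrac{6X}{\pi^{2}\varphi(q)\varphi_{-1}(mq)}\prod_{p\nmid mq}\bigl(1+(2-1/p)/((p-1)^{2}(1+1/p))\bigr)$, matching \eqref{notationE}. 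Using Cauchy--Schwarz in the $d$-variable with the weights $g(d)d^{-1/2}$ and $g(d)d^{1/2}$, then applying Step~1 to each ${\cal M}_{2,\mu^2}(X/d,q,md)$ together with the factorisation $\psi_{1/2}(md)=\psi_{1/2}(m)\psi_{1/2}(d)$, yields the second bound. The auxiliary series $\sum_{d,\,(d,mq)=1}\mu^{2}(d)g(d)d^{-1/2}$ converges absolutely thanks to $g(p)\ll 1/p$, so no additional logarithmic loss appears beyond the $(\log 2X)^{5}$ factor already present in Lemma~\ref{lemme Le Boudec}.

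The main technical obstacle is the bookkeeping: one has to verify that the cross terms arising from the two expansions leave the main term of \eqref{notationE} untouched, that the various weighted sums over divisors of $m^{\infty}$ or over squarefrees coprime to $mq$ are controlled by $\psi_{1/2}(m)^{4}$, and that the extra factor $2^{\omega(d)}$ coming from $g$ is absorbed. Throughout one must also treat the boundary regime $X/d\lesssim q$ separately by a crude bound, but these contributions remain negligible.
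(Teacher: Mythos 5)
Your proposal is correct and follows essentially the same route as the paper: both reductions go through convolution identities to Le Boudec's bound \eqref{leboudecborne*} for the unweighted, $m=1$ case, with Cauchy--Schwarz over the divisor variable and Rankin's trick for the tails. The only (harmless) variations are that you use the $\ell^2_a$ triangle inequality in Step 1 where the paper uses a weighted Cauchy--Schwarz, and in Step 2 you convolve with an explicit multiplicative $g$ supported on squarefrees and push the coprimality into the modulus argument $md$, whereas the paper uses the Dirichlet-series quotient $f_1$ of \eqref{identiteconvolutionavecf1} and keeps $m$ fixed.
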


\begin{rem}
La borne triviale est en $O(X^2/q+q).$ Donc nous   obtenons ici une am\'elioration de la borne triviale d\`es que $q\leq X/T$ avec $T$ plus grand qu'une puissance de $\log X$.
\end{rem}
\begin{proof}
 En utilisant l'identit\'e de convolution \eqref{identiteconvolution}, nous obtenons
    \begin{align*}
{\cal M}_{2,\mu^2}&(X,q,m) =\sum_{\substack{1\leq a\leq q\\ (a,q)=1}}        \Bigg| \sum_{\substack{d\mid m^\infty\\ (d,q)=1}}\lambda(d)\Big( \sum_{\substack{1\leq n\leq X/d\\ dn\equiv a \bmod q}}\mu(n)^2-\prod_{p\nmid q} \Big(1-\frac 1{p^2}\Big)\frac X{dq}\Big)\Bigg|^2
\cr
&\leq   2\Big(\sum_{\substack{d\mid m^\infty\\ 1\leq d\leq X}}d^{-1/2}\Big)\sum_{\substack{d\mid m^\infty\\ 1\leq d\leq X}}d^{ 1/2}{\cal M}_{2,\mu^2}(X/d,q,1) 
+   2\Big(  \sum_{\substack{d\mid m^\infty\\ d> X}}\frac { \lambda(d)}{d } \Big)^2
 \prod_{p\nmid q} \Big(1-\frac 1{p^2}\Big)^2\frac {X^2\phi(q)}{q^2} .
    \end{align*}
Avec la majoration \eqref{majsumd}, la m\'ethode de Rankin fournit
$$\Big|\sum_{\substack{d\mid m^\infty\\ d> X}}\frac { \lambda(d)}{d }\Big|\leq  \sum_{\substack{d\mid m^\infty }}\frac { 1}{d^{1/2}\sqrt{X} } \ll \frac{\psi_{1/2}(m)^2}{\sqrt{X}}. $$
En utilisant  \eqref{leboudecborne*}, nous obtenons finalement 
$$ {\cal M}_{2,\mu^2}(X,q,m)\ll\psi_{1/2}(m)^4
X(\log 2X)^5  2^{\omega(q)}. 
   $$  

Estimons maintenant    ${\cal M}_{2}(X,q,m)$.
Lorsque $\Re e(s)>1$, un simple calcul fournit
 $$\sum_{n=1}^\infty \frac{\mu(n)^2n^2}{ \phi(n)^2n^s}
    =\prod_{p}\Big( 1+\frac{ 1}{ (1-1/p)^2p^s}\Big)
     =\prod_{p}\Big( 1+\frac{ 2-1/p}{ (1-1/p)^2p(p^{s}+1)}\Big)\sum_{n=1}^\infty \frac{\mu(n)^2 }{  n^s}.
$$
Nous notons $f_1$ la fonction arithm\'etique dont la s\'erie de Dirichlet correspond au produit eul\'erien ci-dessus de sorte que
\begin{equation}
    \label{identiteconvolutionavecf1}
\frac{\mu(n)^2n^2}{ \phi(n)^2 }  =(\mu^2*f_1)(n).\end{equation}
 De \eqref{identiteconvolutionavecf1} puis \eqref{majM2mu2m}, nous d\'eduisons 
   \begin{align*}
{\cal M}_{2} (X,q,m) &=\sum_{\substack{1\leq a\leq q\\ (a,q)=1}}\Bigg|\sum_{\substack{1\leq d\leq X\\ (d,mq)=1}}f_1(d)\Big( 
\sum_{\substack{1\leq n\leq X/d\\ dn\equiv a \bmod q\\ (n,m)=1}}\mu(n)^2-\frac {6X}{\pi^2d\varphi(q)\varphi_{-1} (mq)}\Big)\Bigg|^2
\cr
&\leq 2 \Big(\sum_{\substack{1\leq d\leq X\\ (d,mq)=1}}\frac1{d^{1+\varepsilon}}\Big)\sum_{\substack{1\leq d\leq X\\ (d,mq)=1}}|f_1(d)|^2d^{1+\varepsilon}{\cal M}_{2,\mu^2}(X/d,q,m) 
\cr&\quad + 2\phi(q) \Big(  \sum_{\substack{d>X\\ (d,mq)=1}}\frac { f_1(d)}{d } \Big)^2
 \frac {36X^2}{\pi^4 \varphi(q)^2\varphi_{-1} (mq)^2} .
    \end{align*}
En raisonnant de la mani\`ere que pour ${\cal M}_{2,\mu^2}(X,q,m)$, nous obtenons ais\'ement le r\'esultat annonc\'e. \end{proof}

Nous consid\'erons la fonction arithm\'etique en deux variables  $f_2$  d\'efinie par la relation de convolution  
\begin{equation}\label{relconvolutionf}
    \delta((x,y))\frac{\mu(x)^2x^2 }{\phi(x)^2} \frac{\mu(y)^2y^2 }{\phi(y)^2}=\sum_{\substack{d_1x'= x\\ d_2y'=y}}f_2(d_1,d_2)\mu(x')^2\mu(y')^2, \end{equation} o\`u $\delta$ est l'\'el\'ement neutre de la convolution valant $1$ en $1$ et $0$ ailleurs.
Lorsque $\Re e( s_1)>1$ et $\Re e(s_2)>1$, il vient 
 \begin{align*}
 \sum_{\substack{x,y\geq 1\\ (x,y)=1}} \frac{\mu(x)^2x^2}{ \phi(x)^2x^{s_1}}\frac{\mu(y)^2y^2}{ \phi(y)^2y^{s_2}}&
    =\prod_{p}\Big( 1+\frac{ 1}{ (1-1/p)^2p^{s_1}}
    +\frac{ 1}{ (1-1/p)^2p^{s_2}}\Big)
     \cr &
     =\Big(\sum_{x=1}^\infty \frac{\mu(x)^2 }{  x^{s_1}}
     \sum_{y=1}^\infty \frac{\mu(y)^2 }{  y^{s_2}}\Big)\sum_{\substack{d_1, d_2\geq 1}}\frac{f_2(d_1, d_2)}{d_1^{s_1} d_2^{s_2}},
\end{align*}
avec
$$ \sum_{\substack{d_1, d_2\geq 1}}\frac{f_2(d_1, d_2)}{d_1^{s_1} d_2^{s_2}}=
\prod_{p}\Big( 1+\frac{ (2-1/p)(1/{p^{s_1}}+ 1/{p^{s_2}})}{ (1-1/p)^2p( 1+1/p^{s_1})(1+1/p^{s_2})}
      - \frac{ 1}{ (p^{s_2}+1)( p^{s_1}+1)}\Big).$$

Nous notons aussi le calcul
\begin{equation}\label{calculsomfd1d2}
    \sum_{\substack{d_1, d_2\geq 1}}\frac{f_2(d_1, d_2)}{d_1  d_2 } 
     =\prod_{p }\Big( \frac{1+1/p^2}{(1-1/p)^2(1+1/p)^2}\Big)=\frac{\zeta(2)^3}{\zeta(4)}.
\end{equation}

Nous \'enon\c cons une cons\'equence directe du Lemme~\ref{lemme M2} (voir aussi \cite{LB18}) et du Lemme \ref{lemme est S*}.
\begin{lemma}\label{lemme sumSS}Lorsque $X,Y\geq 1$, $q\geq 1$  tel que $\mu(q)^2=1$ et $(u,q)=1, $ alors
\begin{equation}
    \label{estsumSSLB}\begin{split}  
\sum_{\substack{1\leq a\leq q\\ (a,q)=1}} 
&S_{\mu^2}(X;q,a,1)
 S_{\mu^2}(Y;q,au,1)
 -\frac {36XY}{\pi^4\varphi(q) \varphi_{-1} (q)^2}\cr&\ll
(XY)^{1/2 }
(\log 2X)^{5/2} (\log 2Y)^{5/2} 2^{\omega(q)} \Big(1+\frac{ X^{1/2 }{\cal L}(Y)^{-1}+ Y^{1/2 }{\cal L}(X)^{-1}}{q }  \Big) .
\end{split}\end{equation}

\end{lemma}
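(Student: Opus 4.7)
Le plan est de d\'ecomposer $S_{\mu^2}(X;q,a,1) = M_X + E_{\mu^2}(X;q,a,1)$ avec $M_X := \frac{6X}{\pi^2 \varphi(q)\varphi_{-1}(q)}$, et de m\^eme pour $Y$. Comme $(u,q)=1$, l'application $a\mapsto au$ est une bijection de $(\Z/q\Z)^*$ sur lui-m\^eme. En d\'eveloppant le produit, la somme se scinde en quatre contributions: le terme principal $\varphi(q) M_X M_Y$ qui vaut exactement $\frac{36XY}{\pi^4\varphi(q)\varphi_{-1}(q)^2}$, deux termes lin\'eaires en les erreurs, et un terme bilin\'eaire.

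Pour les termes lin\'eaires, la bijection $a\mapsto au$ donne $\sum_{(a,q)=1} E_{\mu^2}(Y;q,au,1) = S_{\mu^2}(Y;q) - \varphi(q)M_Y$, d'o\`u, par le Lemme~\ref{lemme est S*}, la majoration $\ll \psi_{1/3}(q)^2 Y^{1/2}\mathcal{L}(Y)^{-1}$. Utilisant les estimations \'el\'ementaires $\psi_{1/3}(q)^2 \ll 2^{\omega(q)}$ (les seuls nombres premiers $p$ avec $(1+p^{-1/3})^2>2$ sont ceux $\leq 13$) et $q/(\varphi(q)\varphi_{-1}(q)) = \varphi_{-1}(q)^{-2} \ll (\log_2(3q))^2$, on voit que $M_X\sum_{(a,q)=1} E_{\mu^2}(Y;q,au,1)$ est englob\'e dans la contribution $(XY)^{1/2}(\log 2X)^{5/2}(\log 2Y)^{5/2} 2^{\omega(q)} X^{1/2}\mathcal{L}(Y)^{-1}/q$ de la borne annonc\'ee. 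Le terme sym\'etrique se traite identiquement.

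Pour le terme bilin\'eaire, l'in\'egalit\'e de Cauchy--Schwarz, combin\'ee \`a la bijection $a\mapsto au$ dans la seconde somme, donne
$$\Big|\sum_{(a,q)=1} E_{\mu^2}(X;q,a,1)E_{\mu^2}(Y;q,au,1)\Big| \leq \mathcal{M}_{2,\mu^2}(X,q,1)^{1/2}\mathcal{M}_{2,\mu^2}(Y,q,1)^{1/2},$$
et deux applications du Lemme~\ref{lemme Le Boudec} fournissent la borne $(XY)^{1/2}(\log 2X)^{5/2}(\log 2Y)^{5/2}2^{\omega(q)}$, soit exactement le premier terme d'erreur de l'\'enonc\'e.

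La seule v\'eritable difficult\'e r\'eside dans le traitement du terme bilin\'eaire, o\`u la majoration en moyenne quadratique de Le Boudec est indispensable: les bornes ponctuelles \`a la Hooley~\eqref{estHooely} se d\'egradent d\`es $q \gg X^{2/3}$, tandis que la borne en moyenne reste non triviale jusqu'\`a $q$ tr\`es proche de $X$, r\'egime essentiel aux applications ult\'erieures dans l'article. Les termes lin\'eaires, quant \`a eux, ne demandent que la version raffin\'ee du th\'eor\`eme des nombres premiers via le Lemme~\ref{lemme est S*}.
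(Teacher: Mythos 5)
Votre preuve est correcte et suit exactement la m\^eme d\'emarche que celle de l'article : m\^eme d\'ecomposition en terme principal, deux termes lin\'eaires trait\'es par le Lemme~\ref{lemme est S*}, et terme bilin\'eaire major\'e par Cauchy--Schwarz et la borne de Le Boudec \eqref{leboudecborne*}. (Petite coquille sans cons\'equence : $q/(\varphi(q)\varphi_{-1}(q))=\prod_{p\mid q}(1-1/p^2)^{-1}\leq \zeta(2)$, et non $\varphi_{-1}(q)^{-2}$, mais la majoration utilis\'ee reste valable.)
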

   
\begin{proof}   Nous pouvons supposer $X\geq Y.$
Le terme \`a estimer s'\'ecrit\begin{equation}\label{terme a estimer}
    \begin{split} 
     \sum_{\substack{1\leq a\leq q\\ (a,q)=1}} &
 S_{\mu^2}(X;q,a,1)
S_{\mu^2}(Y;q,au,1)-\frac {36XY}{\pi^4\varphi(q) \varphi_{-1} (q)^2} 
\cr& 
= 
\frac {6X}{\pi^2\varphi(q)\varphi_{-1} (q)}\sum_{\substack{1\leq a\leq q\\ (a,q)=1}}  E_{\mu^2}(Y;q,au,1)
+\frac {6Y}{\pi^2\varphi(q)\varphi_{-1} (q)}\sum_{\substack{1\leq a\leq q\\ (a,q)=1}}  E_{\mu^2}(X;q,a,1)\cr&\quad +
\sum_{\substack{1\leq a\leq q\\ (a,q)=1}} 
 E_{\mu^2}(X;q,a,1) E_{\mu^2}(Y;q,au,1).
\end{split}
\end{equation}
Or d'apr\`es le Lemme~\ref{lemme est S*}, nous avons
\begin{align*}\sum_{\substack{1\leq a\leq q\\ (a,q)=1}}  E_{\mu^2}(X;q,a,1)
&=
S_{\mu^2}(X;q)-  \frac {6X}{\pi^2 \varphi_{-1} (q)}
\ll \psi_{1/3}(q)^2X^{1/2 }{\cal L}(X)^{-1}. \end{align*} Pour majorer le dernier terme du membre de droite de \eqref{terme a estimer}, nous appliquons la borne de Le Boudec \eqref{majM2mu2m} via l'in\'egalit\'e de Cauchy-Schwarz. 
  Le terme d'erreur  est donc bien
 \begin{align*}&\ll
\psi_{1/3}(q)^2\frac{XY^{1/2 }{\cal L}(Y)^{-1}}{q}    
 + (XY)^{1/2 }(\log 2X)^{5/2} (\log 2Y)^{5/2}  2^{\omega(q)} 
\cr&\ll  (XY)^{1/2 } (\log 2X)^{5/2} (\log 2Y)^{5/2}  2^{\omega(q)}  \Big(1+\frac{ X^{1/2 }{\cal L}(Y)^{-1} }{q }  \Big).
\end{align*}
\end{proof}

Lorsque $(a_1a_2,g q)=1$, $(g,q)=1$, nous introduisons la somme
\begin{align*}W(X,Y; g,q,a_1,a_2)&:=\sum_{\substack{1\leq a\leq q\\ (a,q)=1}}  \sum_{\substack{1\leq x\leq X\\ ag x\equiv  a_2(\!\bmod q)\\  (x,a_2g)=1}}\frac{\mu(x)^2x^2 }{\phi(x)^2}\sum_{\substack{1\leq y\leq Y\\ ag y\equiv  -a_1(\!\bmod q)\\  (y,a_1gx)=1}}\frac{\mu(y)^2y^2 }{\phi(y)^2}
.\end{align*}
Nous observons 
\begin{align*}W(X,Y; g,q,a_1,a_2)& =  \sum_{\substack{1\leq x\leq X,\, 1\leq y\leq Y\\ a_1x+a_2 y\equiv  0(\!\bmod q)\\  (y,a_1gqx)=1,\,    (x,a_2gq)=1}}\frac{\mu(x)^2x^2 }{\phi(x)^2} \frac{\mu(y)^2y^2 }{\phi(y)^2}
.\end{align*}
Nous aurons aussi besoin d'estimer  la somme
\begin{align*}W^*(X,Y; g,q,a_1,a_2)&:= \sum_{\substack{1\leq x\leq X,\, 1\leq y\leq Y\\ a_1x+a_2 y\equiv  0(\!\bmod q)\\  (y,a_1gqx)=1
,\,     (x,a_2gq)=1\\ (a_1x+a_2 y,g)=1}}\frac{\mu(x)^2x^2 }{\phi(x)^2} \frac{\mu(y)^2y^2 }{\phi(y)^2}
.\end{align*}

Nous introduisons les fonctions arithm\'etiques $w$ et $w^*$ d\'efinies par 
\begin{equation}
\label{defw}    
w( q,a_1,a_2):=\prod_{p\mid  q (a_1,a_2)}\Big( \frac{(1-1/p)^2}{1+1/p^2}\Big) \prod_{\substack{p\mid a_1a_2\\ p\nmid (a_1,a_2)}}\Big( \frac{1-1/p+1/p^2}{1 +1/p^2}\Big) ,\end{equation} et
\begin{equation}
\label{defw*}    
w^*( g,q,a_1,a_2):=w( gq,a_1,a_2)\prod_{p\mid g}\Big(\frac{1-2/p}{1-1/p}\Big)=w( gq,a_1,a_2) \frac{\phi_2(g)}{\phi_1(g)} . 
\end{equation}
Le lemme suivant sera utilis\'e pour d\'emontrer le Lemme \ref{lemme estS3}.

\begin{lemma}\label{lemme sumW}Lorsque $X,Y\geq 1$, $q\geq 1$  tel que $\mu(q)^2=1$ et $(a_1a_2,gq)=(g,q)=1, $ nous avons alors 
\begin{align*} 
& W (X, Y; g, q,a_1,a_2) 
-\frac {\zeta(2)}{\zeta(4) }XY\frac{w(gq,a_1,a_2)}{\phi(q)}\cr& \ll \psi_{1/3}(a_1g)^2\psi_{1/3}(a_2g)^2(XY)^{1/2 }
(\log (2XY))^{6}  2^{\omega(q)}
\Big(1+\frac{ X^{1/2 }{\cal L}(Y)^{-1}+ Y^{1/2 }{\cal L}(X)^{-1}}{q } \Big)  ,
\end{align*} 
et 
 \begin{align*} 
 &W^* (X, Y; g, q,a_1,a_2) 
-\frac {\zeta(2)}{\zeta(4) }XY\frac{w^*(g,q,a_1,a_2)}{\phi(q)}\cr& \ll 2^{\omega(g)}\psi_{1/3}(a_1g)^2\psi_{1/3}(a_2g)^2(XY)^{1/2 }
(\log (2XY))^{6}   2^{\omega(q)}
\Big(1+\frac{ X^{1/2 }{\cal L}(Y)^{-1}+ Y^{1/2 }{\cal L}(X)^{-1}}{q }  \Big)  .
\end{align*}
\end{lemma}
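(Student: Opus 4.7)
The plan is to decompose $W$ using the two-variable convolution identity \eqref{relconvolutionf}. Since the summation conditions of $W$ contain $(x,y)=1$ (implicit in $(y,a_1gqx)=1$), the summand matches the left-hand side of \eqref{relconvolutionf}. Setting $x=d_1x'$, $y=d_2y'$, separating the coprimality into conditions on $(d_1,d_2)$ and on $(x',y')$, and parametrizing the congruence $a_1d_1x'+a_2d_2y'\equiv 0\pmod q$ by the residue $c\equiv x'\pmod q$ (which forces $y'\equiv uc\pmod q$ with $u\equiv -\overline{a_2d_2}\,a_1d_1\pmod q$), one obtains
\begin{equation*}
W=\sum_{\substack{(d_1,a_2gq)=1\\ (d_2,a_1gq)=1}} f_2(d_1,d_2)\sum_{\substack{1\leq c\leq q\\ (c,q)=1}} S_{\mu^2}(X/d_1;q,c,a_2g)\, S_{\mu^2}(Y/d_2;q,uc,a_1g).
\end{equation*}

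I would then reduce the coprimality-to-$m$ in each $S_{\mu^2}$ to the case $m=1$ via the Liouville identity \eqref{identiteconvolution}, as in the proof of Lemma~\ref{lemme est S*}, and invoke Lemma~\ref{lemme sumSS} on each resulting inner double sum. This yields a main term of size $\tfrac{36(X/d_1)(Y/d_2)}{\pi^4\phi(q)\varphi_{-1}(a_1gq)\varphi_{-1}(a_2gq)}$ per pair $(d_1,d_2)$. Summing against $f_2(d_1,d_2)/(d_1d_2)$ with the coprimality constraints collapses into an Euler product, which I would verify matches $\zeta(2)w(gq,a_1,a_2)/(\zeta(4)\phi(q))$ prime-by-prime: at $p\nmid a_1a_2gq$, the restricted Euler factor of $\sum f_2/(d_1d_2)$ evaluates to $p^2(p^2+1)/(p^2-1)^2$, which combines with $(1-1/p^2)^2$ from $36/\pi^4$ to produce exactly $1+1/p^2$, the local factor of $\zeta(2)/\zeta(4)$; analogous case-checks at primes in $(a_1,a_2)$, in $a_1a_2\setminus(a_1,a_2)$, and in $gq$ recover the piecewise definition of $w$.

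For the error terms, Lemma~\ref{lemme sumSS} provides a bound of order $(X/d_1)^{1/2}(Y/d_2)^{1/2}(\log XY)^5 2^{\omega(q)}$ per $(d_1,d_2)$, but $\sum|f_2(d_1,d_2)|/(d_1d_2)^{1/2}$ is only barely convergent. The remedy is to truncate the $(d_1,d_2)$-summation at an $X^{1/3}$-style threshold and treat the tail via Rankin's trick, mirroring the proof of Lemma~\ref{lemme est S*}; the exponent $\tfrac 13$ chosen in the truncation produces the factors $\psi_{1/3}(a_1g)^2\psi_{1/3}(a_2g)^2$ in the stated bound, with the extra logarithmic factor absorbing the divisor-sum cost of the $f_2$-convolution. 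For $W^*$, I would apply Möbius inversion to the condition $(a_1x+a_2y,g)=1$: since $(g,q)=1$, each resulting term is a $W$-sum with modulus $eq$, and since $e\mid g$ forces $w(geq,a_1,a_2)=w(gq,a_1,a_2)$ (same prime support as $gq$), the main terms telescope via $\sum_{e\mid g}\mu(e)/\phi(e)=\phi_2(g)/\phi_1(g)$ to produce exactly $w^*(g,q,a_1,a_2)$; the factor $2^{\omega(g)}$ in the error reflects the number of squarefree divisors of $g$ encountered in the $e$-summation.

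The main obstacle is controlling the error from the $f_2$-convolution: naive absolute-value estimates diverge, so one must carefully balance a Rankin-style truncation against the moment bounds of Le Boudec type (Lemma~\ref{lemme M2}, via Lemma~\ref{lemme sumSS}) to extract the sharp exponent $\tfrac 13$. The main-term Euler-factor computation, while algebraically intricate, reduces to the case-by-case prime analysis outlined above.
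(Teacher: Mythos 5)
Your proposal follows essentially the same route as the paper: the $f_2$-convolution \eqref{relconvolutionf} combined with the Liouville identity \eqref{identiteconvolution} to reduce the inner double sum to the situation of Lemma~\ref{lemme sumSS}, the prime-by-prime identification of the main term with $\zeta(2)w(gq,a_1,a_2)/(\zeta(4)\phi(q))$ via \eqref{calculsomfd1d2}, a Rankin-type shift to control the barely convergent $f_2$-error sum, and M\"obius inversion over the divisors of $g$ (using $\sum_{k\mid g}\mu(k)/\phi(k)=\phi_2(g)/\phi_1(g)$) to pass from $W$ to $W^*$. The only cosmetic difference is that in the paper the exponent $\tfrac13$ --- hence the factors $\psi_{1/3}(a_1g)^2\psi_{1/3}(a_2g)^2$ --- arises when completing the conditions $d_1\ell_1\leq X$, $d_2\ell_2\leq Y$ in the main term via \eqref{majsumd}, rather than from a truncation of the $(d_1,d_2)$-sum, but this is bookkeeping rather than substance.
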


\begin{proof}La seconde estimation se d\'eduit de la premi\`ere par une simple interversion de M\"obius puisque 
$$
W^* (X, Y; g, q,a_1,a_2) =\sum_{k\mid g}\mu(k) W  (X, Y; g/k, kq,a_1,a_2) 
.$$

Estimons donc la premi\`ere.
Nous avons
\begin{align*}W(X,Y; g,q,a_1,a_2) =\sum_{\substack{1\leq a\leq q\\ (a,q)=1}}  \sum_{\substack{1\leq x\leq X\\  x\equiv  aa_2(\!\bmod q)\\  (x,a_2g)=1}}\frac{\mu(x)^2x^2 }{\phi(x)^2}\sum_{\substack{1\leq y\leq Y\\   y\equiv  -aa_1(\!\bmod q)\\  (y,a_1gx)=1}}\frac{\mu(y)^2y^2 }{\phi(y)^2}.\end{align*}  
En utilisant la relation \eqref{relconvolutionf} et   deux fois l'identit\'e de convolution \eqref{identiteconvolution}, nous obtenons alors
\begin{align*}
     W(&X ,Y; g,   \,q,a_1,a_2) =\sum_{\substack{d_1, d_2\geq 1\\ (d_1,a_2gq)=1\\ (d_2,a_1gq)=1}}f_2(d_1,d_2)\sum_{\substack{1\leq a\leq q\\ (a,q)=1}}  \sum_{\substack{ x'\leq  X/d_1\\ d_1  x'\equiv  aa_2(\!\bmod q)\\  (x',a_2g)=1}}\!\!\!\!  {\mu(x')^2 \!\!\!\!}\sum_{\substack{y'\leq Y/d_2\\ d_2  y'\equiv  -aa_1(\!\bmod q)\\  (y',a_1g)=1}}\!\!\!\!\!\! {\mu(y')^2 }
    \cr&=\sum_{\substack{d_1, d_2\geq 1\\ (d_1,a_2gq)=1\\ (d_2,a_1gq)=1}}f_2(d_1,d_2)
    \sum_{\substack{\ell_1\mid (a_2g)^\infty\\ \ell_2\mid (a_1g)^\infty}}\lambda(\ell_1)\lambda(\ell_2)\sum_{\substack{1\leq a\leq q\\ (a,q)=1}}  \sum_{\substack{ x'\leq  X/d_1\ell_1\\ d_1 \ell_1 x''\equiv  aa_2(\!\bmod q)}} \!\!\!\!{\mu(x'')^2 }\!\!\!\!\sum_{\substack{y''\leq Y/d_2\ell_2\\ d_2\ell_2  y''\equiv  -aa_1(\!\bmod q) }} \!\!\!\!\!\!\!\!\!{\mu(y'')^2 }.
\end{align*}
Nous reconnaissons que la triple somme int\'erieure
est la somme estim\'ee au Lem\-me~\ref{lemme sumSS} avec~$u$ d\'efini par la congruence 
  $u\equiv -(d_2\ell_2a_2)^{-1} d_1\ell_1a_1\bmod q.$ Nous l'estimons gr\^ace \`a \eqref{estsumSSLB} en utilisant \eqref{majsumd} et la borne
$$\sum_{\substack{d_1, d_2\geq 1 }}\frac{|f_2(d_1,d_2)|}{ d_1^{1/2+1/(\log 2X)}d_2^{1/2+1/(\log 2Y)}}\ll \log (2XY).$$ 
Le terme d'erreur est alors
\begin{align*}
   & \ll \psi_{1/2}(a_1g)^2\psi_{1/2}(a_2g)^2(XY)^{1/2 }
(\log (2X Y))^{5}  2^{\omega(q)} \Big(1+\frac{ X^{1/2 }{\cal L}(Y)^{-1}+ Y^{1/2 }{\cal L}(X)^{-1}}{q }  \Big)\cr&\qquad
\sum_{\substack{d_1, d_2\geq 1\\ (d_1,a_2gq)=1\\ (d_2,a_1gq)=1}}
\frac{|f_2(d_1,d_2)|}{  d_1^{1/2+1/(\log 2X)}d_2^{1/2+1/(\log 2Y)}}
\cr & \ll \psi_{1/2}(a_1g)^2\psi_{1/2}(a_2g)^2(XY)^{1/2 }
(\log (2X Y))^{6}  2^{\omega(q)} \Big(1+\frac{ X^{1/2 }{\cal L}(Y)^{-1}+ Y^{1/2 }{\cal L}(X)^{-1}}{q }  \Big) .
\end{align*}
Compte-tenu de \eqref{estsumSSLB}, le terme principal est \'egal \`a 
\begin{align*}
 &=   \frac {36XY}{\pi^4\varphi(q) \varphi_{-1} (q)^2}
 \sum_{\substack{d_1, d_2\geq 1\\ (d_1,a_2gq)=1\\ (d_2,a_1gq)=1}}\frac{f_2(d_1, d_2)}{d_1 d_2}
    \sum_{\substack{\ell_1\mid (a_2g)^\infty\\ \ell_2\mid (a_1g)^\infty\\ d_1\ell_1\leq X, d_2\ell_2\leq Y}}\frac{\lambda(\ell_1)\lambda(\ell_2)}{\ell_1\ell_2}.\end{align*}
 Nous compl\'etons la somme en otant les conditions $d_1\ell_1\leq X$ et $ d_2\ell_2\leq Y.$ D'apr\`es \eqref{majsumd}, l'erreur faite est 
 \begin{align*}
 &\ll \frac { XY}{ \varphi(q) \varphi_{-1} (q)^2}
 \sum_{\substack{d_1, d_2\geq 1\\ (d_1,a_2gq)=1\\ (d_2,a_1gq)=1}}\frac{|f_2(d_1, d_2)|((d_1/X)^{2/3}+(d_2/Y)^{2/3})}{d_1 d_2}
    \sum_{\substack{\ell_1\mid (a_2g)^\infty\\ \ell_2\mid (a_1g)^\infty}}\frac{1}{(\ell_1\ell_2)^{1/3}}\cr
    &\ll 2^{\omega(q)}\frac { XY^{1/3}+YX^{1/3}}{ q} 
    \psi_{1/3}(a_1g)^2\psi_{1/3}(a_2g)^2,
    \end{align*}
  ce qui est englob\'e dans le terme d'erreur.

   D'apr\`es \eqref{calculsomfd1d2}, le terme principal est donc 
   \begin{align*}&=   \frac {36XY}{\pi^4\varphi(q) \varphi_{-1} (q)^2}
 \sum_{\substack{d_1, d_2\geq 1\\ (d_1,a_2gq)=1\\ (d_2,a_1gq)=1}}\frac{f_2(d_1, d_2)}{d_1 d_2}
    \sum_{\substack{\ell_1\mid (a_2g)^\infty\\ \ell_2\mid (a_1g)^\infty }}\frac{\lambda(\ell_1)\lambda(\ell_2)}{\ell_1\ell_2}\cr
 &=   \frac { XY}{\zeta(2)^2\varphi(q) \varphi_{-1} (q)^2\varphi_{-1}(a_1g)\varphi_{-1}(a_2g)}
 \sum_{\substack{d_1, d_2\geq 1\\ (d_1,a_2gq)=1\\ (d_2,a_1gq)=1}}\frac{f_2(d_1, d_2)}{d_1 d_2} 
 \cr&=   \frac {\zeta(2)XY}{\zeta(4)\varphi(q)  }\prod_{p\mid gq }\Big( \frac{(1-1/p)^2}{1+1/p^2}\Big) \prod_{\substack{p\mid a_1a_2\\ p\nmid (a_1,a_2)}}\Big( \frac{1-1/p+1/p^2 }{1 +1/p^2}\Big)\prod_{\substack{p\mid  (a_1,a_2)}}\Big( \frac{(1-1/p)^2 }{1+1/p^2}\Big)  . 
\end{align*}
Cela ach\`eve la d\'emonstration.
\end{proof}

\subsection{Estimations usuelles de certaines doubles sommes}

Nous aurons aussi besoin de sommer la fonction $w^*$. Nous rappelons ici la d\'efinition \eqref{defpsi} de $\phi_j$.

\begin{lemma}\label{lemme soma2} Lorsque $ q\geq 1,$ $a_1,A_2\geq 1$  nous avons 
$$\sum_{\substack{1\leq  a_2\leq A_2}}w^*(g,q,a_1,a_2)
=   \frac{\phi_1(gq)^2 \phi_2(g)}{ \varphi_{-1}(gq)\phi_1(g)}\frac{\zeta(4)}{\zeta(2)^2}A_2
+O\big(\psi_{1/2}(a_1gq) A_2^{1/2}\big).$$ 
\end{lemma}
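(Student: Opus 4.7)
La stratégie sera d'exploiter le fait que, pour $g, q, a_1$ fixés, la fonction $a_2 \mapsto w^*(g,q,a_1,a_2)$ ne dépend que de $\mathrm{rad}(a_2)$, comme on le vérifie directement à partir des définitions \eqref{defw} et \eqref{defw*}. On écrira
\[
w^*(g,q,a_1,a_2) = C_0 \prod_{p\mid a_2} \tilde{f}(p),
\]
avec $C_0 := w^*(g,q,a_1,1)$ et $\tilde{f}(p)$ des facteurs locaux déterminés par $[p\mid gq]$ et $[p\mid a_1]$, obtenus par un examen cas par cas des deux produits définissant $w$. Ces facteurs vérifient $\tilde{f}(p) = 1 + O(1/p)$ pour tout $p \nmid a_1 gq$, et $\tilde{f}(p) = O(1)$ pour les autres primes.

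Cela permet d'introduire la fonction multiplicative $\lambda$, supportée sur les entiers sans facteur carré, définie par $\lambda(p) = C_0(\tilde{f}(p) - 1)$, de sorte que $w^*(g,q,a_1,\cdot) = \lambda * 1$. Il viendra alors
\[
\sum_{a_2 \leq A_2} w^*(g,q,a_1,a_2) = A_2 \sum_{d=1}^\infty \frac{\lambda(d)}{d} - A_2 \sum_{d > A_2} \frac{\lambda(d)}{d} - \sum_{d \leq A_2} \lambda(d)\{A_2/d\}.
\]

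Le terme principal $A_2 \cdot C_0 \prod_p(1 + (\tilde{f}(p) - 1)/p)$ s'évalue comme un produit eulérien. Pour les primes $p \nmid a_1 gq$, le facteur local se simplifie en $1/(1+1/p^2)$, et leur contribution collective est $\zeta(4)/\zeta(2)$ multipliée par les inverses $(1+1/p^2)$ aux primes de $a_1gq$. Aux primes $p\mid a_1 gq$, on combinera les facteurs locaux de $C_0$ et ceux provenant de $\tilde{f}(p)$, puis on simplifiera en utilisant les identités $\prod_p(1-1/p^2)^{-1} = \zeta(2)$ et $\varphi_{-1}(m) = \prod_{p\mid m}(1+1/p)$, pour obtenir le coefficient annoncé $\phi_1(gq)^2\phi_2(g)\zeta(4)/(\varphi_{-1}(gq)\phi_1(g)\zeta(2)^2)$. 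Cette simplification algébrique sera l'étape la plus délicate, car elle exige de suivre avec soin les contributions des divers cas selon les divisibilités par $gq$ et $a_1$.

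Pour contrôler les deux termes d'erreur, on utilisera l'astuce de Rankin. Les bornes $|\lambda(p)| \ll 1/p$ pour $p\nmid a_1 gq$ et $|\lambda(p)| \ll 1$ pour $p\mid a_1 gq$ entraînent
\[
\sum_{d \leq A_2} |\lambda(d)| \leq A_2^{1/2} \prod_p\Big(1 + \frac{|\lambda(p)|}{p^{1/2}}\Big) \ll \psi_{1/2}(a_1 gq)\, A_2^{1/2}.
\]
Le produit sur $p\nmid a_1 gq$ converge absolument puisque $|\lambda(p)|/p^{1/2} = O(p^{-3/2})$, tandis que le produit sur les primes de $a_1 gq$ est majoré par $\prod_{p\mid a_1gq}(1 + p^{-1/2}) = \psi_{1/2}(a_1 gq)$. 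Une majoration analogue pour la queue $A_2\sum_{d>A_2}\lambda(d)/d$, obtenue par la même astuce appliquée à $\sum_{d>A_2}|\lambda(d)|/d \leq A_2^{-1/2}\sum_d |\lambda(d)|/d^{1/2}$, fournit au total la majoration souhaitée $O(\psi_{1/2}(a_1 gq)A_2^{1/2})$.
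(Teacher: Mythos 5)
Your proposal is correct and follows essentially the same route as the paper: writing $w^*(g,q,a_1,\cdot)$ as $\lambda*1$ with $\lambda$ supported on squarefree integers is exactly the paper's factorization of the Dirichlet series $\sum_{(a_2,gq)=1}w(1,a_1,a_2)a_2^{-s}$ as $\zeta(s)$ times an absolutely convergent Euler product, and your hyperbola-plus-Rankin handling of the main term and the $O(\psi_{1/2}(a_1gq)A_2^{1/2})$ error is precisely the content of the paper's Lemme~\ref{lemme sum xi}, which it invokes at that point. The only point to make explicit is that both the announced constant and the $\psi_{1/2}$ error require the sum to be restricted to $(a_2,gq)=1$, as in the paper's proof (so that $\lambda(p)=-1$ at $p\mid gq$), a restriction the statement leaves implicit.
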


\begin{proof}  
  Lorsque $(a_1a_2,gq)= 1, $ nous \'ecrivons
$$w^*(g,q,a_1,a_2) =\frac{ \phi_2(g)}{ \phi_1(g)}\prod_{p\mid gq}\Big( \frac{(1-1/p)^2}{1+1/p^2}\Big)w(1,a_1,a_2).$$ 
Lorsque   $\Re e (s)>1$, un rapide calcul fournit 
\begin{align*}\sum_{\substack{ a_2=1\\ ( a_2,gq)=1}}^\infty \frac{w( 1,a_1,a_2)}{ a_2^{s}}
   &=\prod_{\substack{p\nmid gq\\ p\mid a_1}}\Bigg( \frac{1-1/p+1/p^2 }{1 +1/p^2}+ \frac{(1-1/p)^2}{(p^{s}-1) ( 1+1/p^2)}\Bigg)\cr&\quad
    \prod_{\substack{p\nmid gq\\ p\nmid a_1}}\Bigg( 1+\frac{1-1/p+1/p^2 }{(1 +1/p^2)(p^{s}-1)}  \Bigg)
\cr&=\zeta(s)\prod_{\substack{p\nmid gq }}\Big( \frac{1-1/p^{s+1} }{1 +1/p^2} \Big)\prod_{\substack{p\mid gq }}\Big( 1-\frac{1}{p^{s} }  \Big)\cr&\quad
\prod_{\substack{  p\mid a_1}}\Big( \frac{1-1/p+1/p^2  + (1-1/p)^2/(p^{s}-1)}{1+1/p^2 -1/ p^{1+s}}\Big).\end{align*}
Gr\^ace au Lemme \ref{lemme sum xi}, nous en d\'eduisons l'estimation
$$\sum_{\substack{ a_2\leq A_2\\ (a_2, gq)=1}} w( 1,a_1,a_2)
=\frac{\zeta(4)}{\zeta(2)^2\varphi_{-1}(gq)}\prod_{p\mid gq}\Big(1+\frac{1}{p^2}\Big)
 A_2+O\big(\psi_{1/2}(gqa_1)^2A_2^{1/2}\big)
.$$
Cela fournit le r\'esultat requis.
\end{proof}

\begin{lemma}\label{lemme soma1a2} Lorsque $ q\geq 1,$ nous avons lorsque $A_1,A_2\geq 1$ 
$$\sum_{\substack{1\leq a_1\leq A_1\\ 1\leq  a_2\leq A_2}}w^*(g,q,a_1,a_2)
=  \frac{\phi_1(gq)^3\phi_2(g)}{ \varphi_{-1}(gq)\phi_1(g)}\frac{\zeta(4)}{\zeta(2)^2}A_1A_2
+O\big(2^{\omega(gq)}(A_1+A_2)\log (2A_1A_2)\big).$$ 
\end{lemma}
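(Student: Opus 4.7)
The plan is to exploit the multiplicative structure of $w^*(g,q,a_1,a_2)$ in the pair $(a_1,a_2)$ by writing $w(1,a_1,a_2)$ as a two-dimensional Dirichlet convolution; the double sum then reduces to counting integers in a box subject to a divisibility and coprimality condition. Following the convention of Lemma~\ref{lemme soma2}, I read the sum with the implicit restriction $(a_1a_2,gq)=1$, so that $w^*(g,q,a_1,a_2) = C(g,q)\,w(1,a_1,a_2)$ with $C(g,q) := \frac{\phi_2(g)}{\phi_1(g)}\prod_{p\mid gq}\frac{(1-1/p)^2}{1+1/p^2}$. Direct iteration of Lemma~\ref{lemme soma2} cannot succeed: its error $\psi_{1/2}(a_1 gq)^2 A_2^{1/2}$ summed over $a_1\leq A_1$ produces $A_1 A_2^{1/2}$, which is larger than the target $(A_1+A_2)\log$ once $A_1\asymp A_2$, so a genuinely two-dimensional treatment is required.

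The first key step is the convolution identity
$$w(1,a_1,a_2) = \sum_{\substack{d_1\mid a_1,\ d_2\mid a_2\\ \mu^2(d_1d_2)=1,\ (d_1,d_2)=1}} h(d_1)h(d_2),\qquad h(p) := \frac{-p}{p^2+1},$$
with $h$ extended multiplicatively. This is verified prime by prime: the local factor of $w(1,\cdot,\cdot)$ at $p$ takes only the four values $1$, $c_1(p)=\frac{1-1/p+1/p^2}{1+1/p^2}$, $c_1(p)$, $c_2(p)=\frac{(1-1/p)^2}{1+1/p^2}$ according to whether $v_p(a_1),v_p(a_2)$ are zero or positive, and solving the resulting $2\times 2$ Möbius system gives $h_p(1,0)=h_p(0,1)=-p/(p^2+1)$ and, crucially, $h_p(1,1)=0$, whence the restrictions $\mu^2(d_1d_2)=1$ and $(d_1,d_2)=1$.

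Swapping the order of summation, the left-hand side equals
$$C(g,q)\sum_{\substack{d_1\leq A_1,\ d_2\leq A_2\\ (d_1d_2,gq)=1,\ (d_1,d_2)=1\\ \mu^2(d_1d_2)=1}} h(d_1)h(d_2)\, N(A_1/d_1)\,N(A_2/d_2),\qquad N(A) := \phi_1(gq)A + O(2^{\omega(gq)}),$$
where $N$ counts $n\leq A$ with $(n,gq)=1$. The diagonal term yields $C(g,q)\phi_1(gq)^2 A_1 A_2\sum h(d_1)h(d_2)/(d_1d_2)$; since $|h(d)|/d\leq 1/d^2$ on squarefree $d$, completing the sum costs $O(A_1+A_2)$, and the completed sum equals the Euler product $\prod_{p\nmid gq}(1+2h(p)/p)=\prod_{p\nmid gq}\frac{p^2-1}{p^2+1}=\frac{\zeta(4)}{\zeta(2)^2}\prod_{p\mid gq}\frac{p^2+1}{p^2-1}$. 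Multiplying by $C(g,q)$ and applying the algebraic identity $\frac{(1-1/p)^2(p^2+1)}{(1+1/p^2)(p^2-1)}=\frac{1-1/p}{1+1/p}$ collapses the $p\mid gq$ product to $\phi_1(gq)/\phi_{-1}(gq)$, producing the announced main term $\frac{\phi_1(gq)^3\phi_2(g)}{\phi_{-1}(gq)\phi_1(g)}\frac{\zeta(4)}{\zeta(2)^2}A_1A_2$.

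Finally, the cross terms in $N(A_i/d_i)=\phi_1(gq)A_i/d_i+O(2^{\omega(gq)})$ contribute
$$\ll 2^{\omega(gq)}\phi_1(gq)\bigl(A_1\sum_{d_2\leq A_2}|h(d_2)|+A_2\sum_{d_1\leq A_1}|h(d_1)|\bigr)+2^{2\omega(gq)}(\log A_1)(\log A_2),$$
and since $|h(d)|\leq 1/d$ gives $\sum_{d\leq X}|h(d)|\ll\log X$ and $\sum_{d\geq 1}|h(d)|/d=O(1)$, the whole error is $\ll 2^{\omega(gq)}(A_1+A_2)\log(2A_1A_2)$. The main technical hurdle is purely algebraic—establishing the prime-by-prime identity for $h$ (notably $h_p(1,1)=0$) and checking the collapse of the $p\mid gq$ Euler factor to $\phi_1(gq)/\phi_{-1}(gq)$—after which the hyperbola bookkeeping is routine.
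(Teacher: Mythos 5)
Your argument is essentially the paper's: the kernel $h(d_1)h(d_2)$ supported on coprime squarefree pairs with $h(p)=-p/(p^2+1)$ is exactly the function $w'$ that the paper extracts by factoring the double Dirichlet series $\sum w(1,a_1,a_2)a_1^{-s_1}a_2^{-s_2}$ over $p\nmid gq$ (you obtain it by solving the local M\"obius system instead), and the subsequent substitution of the coprime-integer count $\phi_1(gq)A+O(2^{\omega(gq)})$ together with the Euler-product collapse to $\phi_1(gq)/\varphi_{-1}(gq)$ is the same computation. The only slip is cosmetic: the residual term $2^{2\omega(gq)}(\log A_1)(\log A_2)$ is not literally $\ll 2^{\omega(gq)}(A_1+A_2)\log(2A_1A_2)$ when $2^{\omega(gq)}$ exceeds $A_1+A_2$; bounding one factor trivially by $N(A_i/d_i)\leq A_i/d_i$ when expanding the product of the two approximations removes that term and recovers the stated error.
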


\begin{proof} La m\'ethode est bien connue. Nous d\'etaillons les calculs afin de d\'eterminer le facteur de $A_1A_2$ que nous obtenons.
  Lorsque $(a_1a_2,gq)= 1, $ nous \'ecrivons
$$w^*(g,q,a_1,a_2) =\frac{ \phi_2(g)}{ \phi_1(g)}\prod_{p\mid gq}\Big( \frac{(1-1/p)^2}{1+1/p^2}\Big)w(1,a_1,a_2).$$
Lorsque $\Re e (s_1)>1$ et $\Re e (s_2)>1$, un rapide calcul fournit 
\begin{align*}\sum_{\substack{a_1,a_2=1\\ (a_1a_2,gq)=1}}^\infty& \frac{w( 1,a_1,a_2)}{a_1^{s_1}a_2^{s_2}}
    \cr&=\prod_{p\nmid gq}\Bigg( 1+\Big(\frac{1}{p^{s_1}-1}+\frac{1}{p^{s_2}-1}\Big)\frac{1-1/p+1/p^2 }{1 +1/p^2}+ \frac{(1-1/p)^2}{(p^{s_1}-1)(p^{s_2}-1)( 1+1/p^2)}\Bigg).\end{align*}
Nous avons donc la relation de convolution
$$1_{(a_1a_2,gq)=1}w(1,a_1,a_2)=\big(1_{(a_1a_2,gq)=1}*w'\big)(a_1,a_2),$$
avec, lorsque $\Re e (s_1)>1$ et $\Re e (s_2)>1$,
$$\sum_{a_1,a_2=1}^\infty \frac{w'(q,a_1,a_2)}{a_1^{s_1}a_2^{s_2}}
    =\prod_{p\nmid gq}\Bigg( 1-\Big(\frac{1}{p^{s_1} }+\frac{1}{p^{s_2}} \Big)\frac{1}{p(1 +1/p^2)}
    \Bigg).$$
Nous avons
$$
\sum_{\substack{1\leq a_1\leq A_1\\ 1\leq a_2\leq A_2}}1_{(a_1a_2,gq)=1}
= {\phi_1(gq)^2} A_1A_2+O\big(2^{\omega(gq)}(A_1+A_2)\big) 
$$
et
$$ \sum_{a_1,a_2=1}^\infty \frac{|w'(gq,a_1,a_2)|}{a_1^{1/\log (2A_1)}a_2^{1/\log (2A_2)}}\Big(\frac1{a_1}+\frac1{a_2}\Big)\ll \log (2A_1A_2)$$
   ce qui fournit alors  
$$\sum_{\substack{1\leq a_1\leq A_1\\ 1\leq a_2\leq A_2\\ (a_1a_2,gq)=1}}w (1,a_1,a_2)
- {\phi_1(gq)^2} A_1A_2
\prod_{p\nmid gq}\Big(   \frac{1-1/p^2}{ 1+1/ p^2 }\Big)
\ll 2^{\omega(gq)}(A_1+A_2)\log (2A_1A_2).$$ L'estimation requise en d\'ecoule.
\end{proof}

Nous introduisons la double somme suivante 
$$N(A_1,A_2; g,x,y,z):=\sum_{\substack{1\leq a_1 \leq A_1\\ (a_1,q_1)=1}} \sum_{\substack{ 1\leq a_2\leq A_2 \\  a_2\equiv -a_1xy^{-1}\bmod z\\ (a_2 ,gx)=(a_1x+a_2y,g )=1}} 1.$$
et rappelons la d\'efinition \eqref{defpsi} de $\phi_j$.

\begin{lemma}\label{lemme estNA1A2} Lorsque $\mu^2(gxyz)=1$ et $A_1, A_2\geq 1$, nous avons uniform\'ement  l'estimation
    \begin{equation}\label{est1NA1A2}\begin{split}
N(A_1,A_2; g,x,y,z) 
&=  { \phi_1(q_1)\phi_2(g) \phi_1(x)} \frac{  A_1A_2}{z} 
\cr&\quad +O\Big(2^{\omega(q_1)}  \frac{  A_2}{z}+2^{\omega(x)}3^{\omega(g)}A_1
+2^{\omega(q_1)}  2^{\omega(x)}3^{\omega(g)}\Big).\end{split}
\end{equation}
\end{lemma}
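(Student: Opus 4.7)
The natural strategy is to fix $a_1$ and estimate the inner sum over $a_2$ uniformly, then sum the resulting main term over $a_1$ using the usual count of integers coprime to $q_1=gyz$. Throughout, we use freely that $\mu^2(gxyz)=1$, so the integers $g,x,y,z$ are squarefree and pairwise coprime, and $y$ is invertible modulo each prime dividing $gz$.

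For $a_1$ fixed with $(a_1,q_1)=1$, write $a_1':=-a_1xy^{-1}\bmod z$ and denote the inner sum by $N_2(a_1)$. The two conditions mod $g$, namely $(a_2,g)=1$ and $(a_1x+a_2y,g)=1$, become for each prime $p\mid g$ the exclusion of the two distinct residues $a_2\equiv 0$ and $a_2\equiv-a_1xy^{-1}\pmod p$ (distinct because $(a_1x,g)=1$). Expanding by M\"obius for each of these local exclusions and for $(a_2,x)=1$, one gets
\begin{equation*}
N_2(a_1)=\sum_{d\mid x}\mu(d)\sum_{g=g_1g_2g_3}\mu(g_2)\mu(g_3)\sum_{\substack{1\leq a_2\leq A_2\\ a_2\equiv a_1'(\bmod z)\\ dg_2\mid a_2,\ g_3\mid a_1x+a_2y}}1,
\end{equation*}
where the decomposition $g=g_1g_2g_3$ is into pairwise coprime factors (automatic for $g$ squarefree), and terms with $(g_2,g_3)>1$ vanish by the mutual exclusion observed above. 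The moduli $z$, $d$, $g_2$, $g_3$ are pairwise coprime (since $(z,gx)=1$, $(g,x)=1$, $(g_2,g_3)=1$) and $y$ is invertible modulo each, so by CRT the innermost sum defines a single residue class modulo $dg_2g_3z$ and equals $A_2/(dg_2g_3z)+O(1)$.

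Plugging this in, the main term evaluates to
\[
\frac{A_2}{z}\Bigl(\sum_{d\mid x}\frac{\mu(d)}{d}\Bigr)\prod_{p\mid g}\Bigl(1-\tfrac{2}{p}\Bigr)=\frac{A_2}{z}\,\phi_1(x)\,\phi_2(g),
\]
while the error is bounded by $\sum_{d\mid x}\sum_{g=g_1g_2g_3}1=\tau(x)\,3^{\omega(g)}\ll 2^{\omega(x)}3^{\omega(g)}$, uniformly in $a_1$. Hence
\[
N_2(a_1)=\frac{A_2\phi_1(x)\phi_2(g)}{z}+O\bigl(2^{\omega(x)}3^{\omega(g)}\bigr).
\]

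Finally, since $N_2(a_1)$ depends on $a_1$ only through the satisfied hypotheses $(a_1,g)=1$ and the nonvanishing of $a_1xy^{-1}\bmod p$ for $p\mid g$, the outer sum reduces to
\[
N(A_1,A_2;g,x,y,z)=N_2(a_1)\cdot\#\{1\leq a_1\leq A_1:(a_1,q_1)=1\}.
\]
Using the elementary estimate $\#\{1\leq a_1\leq A_1:(a_1,q_1)=1\}=A_1\phi_1(q_1)+O(2^{\omega(q_1)})$ and multiplying out gives the four-term expression
\[
A_1A_2\,\frac{\phi_1(q_1)\phi_1(x)\phi_2(g)}{z}+O\!\left(\frac{2^{\omega(q_1)}A_2}{z}+2^{\omega(x)}3^{\omega(g)}A_1+2^{\omega(q_1)}2^{\omega(x)}3^{\omega(g)}\right),
\]
which is exactly \eqref{est1NA1A2}. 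No genuinely hard step arises: the only point requiring care is handling the coupling between $a_1$ and $a_2$ in the condition $(a_1x+a_2y,g)=1$, which is resolved by the factorisation into disjoint local exclusions above.
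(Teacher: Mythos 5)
Your proposal is correct and follows essentially the same route as the paper: a M\"obius expansion of the coprimality conditions on $a_2$ (your decomposition $d\mid x$, $g=g_1g_2g_3$ is the paper's $k_2\mid gx$, $k_1\mid g$ with $(k_1,k_2)=1$), CRT to merge the congruences into a single class modulo $dg_2g_3z$ counted as $A_2/(dg_2g_3z)+O(1)$, and finally multiplication by $\#\{a_1\leq A_1:(a_1,q_1)=1\}=\phi_1(q_1)A_1+O(2^{\omega(q_1)})$. The evaluation of the main term as $\phi_1(x)\phi_2(g)A_2/z$ and the error count $2^{\omega(x)}3^{\omega(g)}$ both match the paper exactly.
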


\begin{proof}
Nous avons 
    \begin{align*} 
N(A_1,A_2&; g,x,y,z)  =\sum_{\substack{1\leq a_1 \leq A_1\\ (a_1,q_1)=1}}\sum_{\substack{k_2\mid gx\\ k_1\mid  g \\  (k_1,k_2)=1}}\mu(k_1)\mu (k_2)\sum_{\substack{ a_2\leq A_2/ k_2\\  k_2a_2\equiv -a_1xy^{-1}\bmod (k_1z)}} 1
\cr&=\sum_{\substack{1\leq a_1 \leq A_1\\ (a_1,q_1)=1}}\sum_{\substack{k_2\mid gx\\ k_1\mid  g \\  (k_1,k_2)=1}}\mu(k_1)\mu (k_2)\Big\{ \frac{A_2}{k_1k_2z}+O (1  )\Big\}\cr&
= \Big( { \phi_1(q_1)} A_1+O\big(2^{\omega(q_1)}\big)\Big)\Big( { \phi_2(g) \phi_1(x) } \frac{  A_2}{z}+O\big(2^{\omega(x)}3^{\omega(g)}\big)\Big) 
\cr&
=  { \phi_1(q_1)\phi_2(g) \phi_1(x)} \frac{  A_1A_2}{z} 
+O\Big(2^{\omega(q_1)}  \frac{  A_2}{z}+2^{\omega(x)}3^{\omega(g)}A_1
+2^{\omega(q_1)}  2^{\omega(x)}3^{\omega(g)}\Big).
\end{align*} 
\end{proof}

\subsection{Estimations  de certaines sommes friables}

Lorsque $\delta\in \{ (1,1),(1,0),(0,1),(0,0)\}$,
nous introduisons
\begin{equation}\label{def ens D}
    \begin{split} 
D(1,1)& :=\{(x,y,z)\in \mathbb N^3\,:\, X\geq Y\geq Z \geq 1\},\cr
D(1,0)&:=\{(x,y,z)\in \mathbb N^3\,:\, X\geq Y>Z \geq 1\},\cr
D(0,1)&:=\{(x,y,z)\in \mathbb N^3\,:\, X>Y\geq Z \geq 1\},\cr
D(0,0)&:=\{(x,y,z)\in \mathbb N^3\,:\, X> Y> Z\geq 1 \}, \end{split}
\end{equation}
o\`u, \`a chaque fois, $X,Y,Z$ satisfont \eqref{defXYZ} et \eqref{leqXYZ}.
Nous estimons 
$$C(h,T):=C\big(h,T;(1,1)\big)+2C\big(h,T;(0,1)\big)+2C\big(h,T;(1,0)\big)+ C\big(h,T;(0,0)\big)$$avec
\begin{equation}
    \label{def C1qT}
C(h,T;\delta):=\sum_{\substack{P(gxyz)\leq h^3 \\ g YZ>hT,   g X>hT\\ (x,y,z)\in D(\delta)}}\frac{\mu(g)\phi_2(g) \mu(gxyz)^2}{\phi(g)\phi_1(g)\phi( x  yz)},\end{equation}
o\`u nous rappelons la d\'efinition \eqref{defpsi} de $\phi_j.$

\begin{lemma} \label{LemmeestC1}  Lorsque $h\geq 2$, $q=\prod_{p\leq h^3} p$ et $T\geq 2$, nous avons
    $$C(h,T)= \Big(\frac{q}{\phi(q)}\Big)^2
- 3\frac q{  \phi(q)}(\log h)+ 3(\log h)^2+O \big((\log T)^3\log h+(\log h)^3{\cal L}(T)^{-1} \big).$$
\end{lemma}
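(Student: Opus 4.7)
The plan is to symmetrize $C(h,T)$, apply a double inclusion-exclusion on the two size constraints, and reduce each restricted piece to a Mertens-type sum via a multiplicative identity.

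\emph{Symmetrization.} A case analysis on the equality pattern of the dyadic blocks shows that the coefficients $1,2,2,1$ count each ordered tuple $(x,y,z)$ exactly once: strictly decreasing blocks receive combined weight $1+2+2+1=6$ (once per permutation), two-equal blocks receive $1+2=3$, all-equal receive $1$, which in each case equals $|S_3/\mathrm{Stab}|$. Since the summand is symmetric in $(x,y,z)$ and the conditions $gX>hT$, $gYZ>hT$ (valid in the ordered region $X\geq Y\geq Z$) become the symmetric conditions $g\max(X,Y,Z)>hT$ and $g\min(XY,YZ,XZ)>hT$, one has
$$C(h,T)=\sum_{\substack{g,x,y,z\geq 1\\ P(gxyz)\leq h^3\\ g\max(X,Y,Z)>hT\\ g\min(XY,YZ,XZ)>hT}}\frac{\mu(g)\phi_2(g)\mu(gxyz)^2}{\phi(g)\phi_1(g)\phi(xyz)}.$$

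\emph{Inclusion-exclusion and Euler product.} The standard identity $1-1_{A^c}-1_{B^c}+1_{A^c\cap B^c}$ yields $C(h,T)=S_0-S_A-S_B+S_{AB}$. The unrestricted sum $S_0$ is a convergent Euler product: at each prime $p\leq h^3$, summing over whether $p$ is absent from $gxyz$, lies in $g$ (weight $\mu(p)\phi_2(p)/(\phi(p)\phi_1(p))=-(p-2)/(p-1)^2$), or lies in one of $x,y,z$ (weight $1/(p-1)$ each) gives the local factor $1-(p-2)/(p-1)^2+3/(p-1)=(p/(p-1))^2$, so $S_0=(q/\phi(q))^2$, matching the leading term. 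Since $B^c$ is a union of three events $\{gXY\leq hT\}$, $\{gYZ\leq hT\}$, $\{gXZ\leq hT\}$, a secondary inclusion-exclusion splits $S_B$ into three one-pair, three two-pair, and one three-pair contribution; similarly $S_A$ and $S_{AB}$ split.

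\emph{Reduction to Mertens sums.} Each resulting restricted sum is evaluated by summing out the ``free'' variables through the Euler product (which contributes further factors of $q/\phi(q)$) and invoking the local identity
$$\sum_{\substack{gy_1y_2=n\\ \text{pairwise coprime}}}\frac{\mu(g)\phi_2(g)}{\phi_1(g)}=\frac{n}{\phi(n)}\qquad (\mu^2(n)=1),$$
verified prime by prime ($-(p-2)/(p-1)+2=p/(p-1)$ for each $p\mid n$). This collapses each restricted sum to an iterated sum of $1/\phi(n)$ over friable squarefree integers $n\leq hT$, to which the Mertens-type formula $\sum_{n\leq N,\,\mu^2(n)=1}1/\phi(n)=\log N+O(1)$ (available through Lemma~\ref{lemme sum xi}) applies. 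The coefficients conspire to produce $-3(q/\phi(q))\log h+3(\log h)^2$, the factors of $3$ originating from the three pair-choices in the decomposition of $B^c$ and the three ways of singling out one free variable in $A^c\cap B^c$.

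\emph{Error bookkeeping and main obstacle.} The error $O((\log T)^3\log h)$ arises from writing $\log(hT)=\log h+\log T$ at each of the (up to three) nested logarithmic factors, while $O((\log h)^3{\cal L}(T)^{-1})$ comes from the PNT-quality remainder of Lemma~\ref{lemme est S*} (propagated through the Mertens reductions). The main obstacle is the detailed combinatorial bookkeeping of the doubly nested inclusion-exclusion: one must verify that the potentially dominant $(\log h)^3$ contributions from the various restricted sums cancel exactly, leaving only the announced $(\log h)^2$ main terms. This cancellation rests on the arithmetic estimate $\sum_g \mu(g)\phi_2(g)\phi_1(g)/g\ll 1/\log h$ over squarefree friable $g\leq hT$, which expresses the fine balance between the alternating M\"obius prefactor $\mu(g)$ and Mertens' theorem $\sum_{p\leq h^3}(\log p)/p\sim 3\log h$.
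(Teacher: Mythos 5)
Your skeleton coincides with the paper's: the same inclusion--exclusion on the two constraints, the same Euler-product evaluation of the unrestricted sum (your local factor $1-(p-2)/(p-1)^2+3/(p-1)=(p/(p-1))^2$ is exactly the paper's computation of $C_1=(q/\phi(q))^2$), and a reduction of the restricted pieces to logarithmic sums. The symmetrization with weights $1,2,2,1$ is consistent with the structure of $C(h,T)$, and your convolution identity $\sum_{gy_1y_2=n}\mu(g)\phi_2(g)/\phi_1(g)=n/\phi(n)$ checks out locally. However, the plan has a genuine gap at the decisive step. First, the constraints $gX>hT$ and $gYZ>hT$ bear on the \emph{dyadic blocks} $X=2^{\lfloor\log x/\log 2\rfloor}$, etc., not on $x,y,z$ themselves; consequently the restricted sums do not collapse to $\sum_{n\leq N}\mu^2(n)/\phi(n)$ but to sums over dyadic exponents in which the variable $g$ carries the weight $\lfloor(\vartheta_h+\log(T/g))/\log 2\rfloor$. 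The paper's entire evaluation rests on the asymptotic $s(T)=\sum_{g\leq T}\frac{\mu(g)\phi_2(g)}{g\phi_1(g)\phi_{-2}(g)}\big\lfloor\frac{\log(T/g)}{\log 2}\big\rfloor=\frac{1}{C\log 2}+O({\cal L}(T)^{-1})$ (Lemma~\ref{lemme s(T)}, which the author flags as new and whose proof requires a separate two-step argument via the fractional part of $\log T/\log 2$), combined with repeated applications of Lemma~\ref{lemme sum xi} over dyadic blocks that each produce a factor $C\log 2$. Your plan does not engage with this at all.

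Second, the mechanism you propose for taming the potential $(\log h)^3$ contribution is both misstated and insufficient. The relevant M\"obius sums such as $\sum_{g\leq G}\mu(g)\phi_2(g)/(g\phi_1(g)\phi_{-2}(g))$ are of $1/\zeta$ type and satisfy $\ll{\cal L}(G)^{-1}$, not merely $\ll 1/\log h$; more importantly, an upper bound of any quality cannot yield the announced main terms. To extract the exact coefficients $-3$ in front of $(q/\phi(q))\log h$ and $+3$ in front of $(\log h)^2$ one needs the precise limiting constant $1/(C\log 2)$ of $s(T)$, which cancels the normalization $C(\log 2)^k$ coming from the dyadic block sums, together with explicit lattice-point counts of the ordered dyadic exponents (these produce the intermediate coefficients $3/8$, $1/2$, $5/8$ whose weighted combination $\tfrac94-3+\tfrac{15}{4}=3$ gives the final $(\log h)^2$ term). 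The error term $O((\log h)^3{\cal L}(T)^{-1})$ likewise comes from these $1/\zeta$-type sums over $g$ (not from Lemma~\ref{lemme est S*}, which concerns $S_{\mu^2}$ and is used elsewhere). As written, your plan would need Lemma~\ref{lemme s(T)} or an equivalent to be completed.
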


La preuve de cette estimation est longue mais \'el\'ementaire.  Elle  peut \^etre omise en premi\`ere lecture.  Elle repose sur l'estimation de la somme
suivante
\begin{equation}\label{defs(T)}
    s(T ) :=\sum_{\substack{ 1\leq g\leq T }} 
\frac{\mu(g)\phi_2(g)  }{g\phi_1(g)\phi_{-2}(g)   }
\Big\lfloor \frac{ \log ( T/g)}{\log 2}
\Big\rfloor.
\end{equation}
o\`u $\lfloor  t \rfloor$ d\'esigne la partie enti\`ere de $t$.
\`A notre connaissance, ce type de r\'esultat est nouveau.

\begin{lemma}\label{lemme s(T)}   Lorsque $ T\geq 2$, nous avons
    $$s(T)= \frac{1}{ C\log 2}+O \big( {\cal L}(T)^{-1} \big) $$ 
    avec 
$$C:=\prod_{p} \Big\{\Big(1+\frac 2p\Big)\Big(1-\frac 1p\Big)^2\Big\}. $$
\end{lemma}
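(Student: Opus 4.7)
The strategy is to reduce the estimation of $s(T)$ to Prime Number Theorem bounds via the Dirichlet series of $a(g):=\mu(g)\phi_2(g)/(g\phi_1(g)\phi_{-2}(g))$. A direct Euler product computation gives
$$F(s):=\sum_{g\geq 1}\frac{a(g)}{g^s}=\prod_p\Bigl(1-\frac{h_p}{p^{s+1}}\Bigr),\qquad h_p:=\frac{1-2/p}{(1-1/p)(1+2/p)},$$
and $G(s):=F(s)\zeta(s+1)$ extends analytically to $\Re(s)>-1/2$, with
$$G(0)=\prod_p\frac{1}{(1-1/p)^2(1+2/p)}=\frac{1}{C}.$$
Hence $a=c*(\mu/\mathrm{id})$, where $c$ is multiplicative, $c(p^k)=(1-h_p)/p^k$, and $\sum_d|c(d)|<\infty$ rapidly.

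Using $\lfloor x\rfloor=x-\{x\}$ and partial summation, I decompose
$$s(T)=\frac{1}{\log 2}\int_1^T\frac{A(u)}{u}\d u - E(T),\quad A(u):=\sum_{g\leq u}a(g),\ E(T):=\sum_{g\leq T}a(g)\{\log_2(T/g)\}.$$
For the integral term, the convolution $A(u)=\sum_d c(d)M(u/d)$ with $M(x)=\sum_{m\leq x}\mu(m)/m=O({\cal L}(x)^{-1})$ by PNT gives $A(u)\ll{\cal L}(u)^{-1}$ after splitting at $d=u^{1/2}$ and noting $\sum_{d>u^{1/2}}|c(d)|\ll u^{-1/2+\varepsilon}$. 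The Abelian identity $\int_1^\infty A(u)u^{-s-1}\d u=F(s)/s$, which extends analytically to $s=0$ with value $G(0)=1/C$, combined with tail control, yields $\int_1^T A(u)/u\,\d u = 1/C + O({\cal L}(T)^{-1})$, producing the main term $1/(C\log 2)$ at the required precision.

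The crux is establishing $E(T)\ll{\cal L}(T)^{-1}$. Substituting the convolution reduces this to controlling $\varepsilon(Y):=\sum_{m\leq Y}(\mu(m)/m)\{\log_2(Y/m)\}$ at $Y=T/d$ averaged against $c(d)$. The function $\varepsilon(Y)$ is bounded but does not tend to zero as $Y\to\infty$; however, its mean over a dyadic period $Y\in[Y_0,2Y_0]$ vanishes, because $\int_t^{t+1}\{u\}\d u=\tfrac12$ combined with $\sum_m\mu(m)/m=0$ (the abelian limit of $1/\zeta$ at $s=1$). To convert this mean cancellation into a pointwise bound, I would expand $\{x\}=\tfrac12-\pi^{-1}\sum_{n\geq 1}\sin(2\pi nx)/n$, use the de la Vallée Poussin bound $|\zeta(1+it)|^{-1}\ll(\log|t|)^A$, and invoke the factorization $F=G/\zeta(\cdot+1)$: each Fourier mode of $E(T)$ is governed by the truncated Dirichlet series $\sum_{g\leq T}a(g)g^{-2\pi in/\log 2}=F(2\pi in/\log 2)+\text{error}$, and summation over $d$ using the absolute convergence of $c$ together with a careful dyadic decomposition ensures each mode contributes $O({\cal L}(T)^{-1})$. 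Combining the two estimates gives the announced asymptotic $s(T)=1/(C\log 2)+O({\cal L}(T)^{-1})$; the hardest step is the above handling of $E(T)$, for which the structural identity $G(0)=1/C$ (forcing $F(0)=0$) is essential.
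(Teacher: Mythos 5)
You take a genuinely different route from the paper. The paper views $s(T)$ as the summatory function of the Dirichlet series $G(s)/\big((2^s-1)\zeta(s+1)\big)$, asserts the existence of $\lim_{T\to\infty}s(T)=G(0)/\log 2$ by citing Th\'eor\`eme~II.1.6 of Tenenbaum's book, and then obtains the rate of convergence by fixing the fractional part $\vartheta_T$ and summing an absolutely convergent series of dyadic blocks of size $O(\e^{-2\sqrt m})$. Your main-term computation (the splitting $\lfloor x\rfloor = x-\langle x\rangle$, the identity $\int_1^\infty A(u)u^{-1}\,\d u=\lim_{s\to0^+}F(s)/s=G(0)=1/C$, and the tail bound via $A(u)\ll {\cal L}(u)^{-1}$) is correct and more explicit than the paper's.

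The gap is in your treatment of $E(T)=\sum_{g\le T}a(g)\langle\log_2(T/g)\rangle$, and it cannot be closed. Expanding $\langle x\rangle=\tfrac12-\pi^{-1}\sum_{n\ge1}\sin(2\pi nx)/n$ as you propose, the $n$-th mode of $E(T)$ is $-\tfrac1{\pi n}\Im\big(\e^{2\pi in\log_2 T}\sum_{g\le T}a(g)g^{-2\pi in/\log 2}\big)$, and the inner sum converges, by exactly your convolution argument, to $F(2\pi in/\log 2)=G(2\pi in/\log 2)/\zeta(1+2\pi in/\log 2)$. For $n\ne 0$ this is \emph{not} zero: $\zeta(1+it)$ is finite and nonzero for $t\ne0$, and $G$ is a nonvanishing, absolutely convergent Euler product on $\Re e(s)\ge 0$. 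The identity $F(0)=0$ that you call essential kills only the $n=0$ mode. Hence each nonzero mode contributes an oscillation of fixed amplitude $|F(2\pi in/\log 2)|/(\pi n)$, and $E(T)$ converges to a nonconstant, mean-zero periodic function of $\log_2 T$; your own remark that $\varepsilon(Y)$ does not tend to zero already signals this, and mean cancellation over a dyadic period cannot be upgraded to a pointwise bound here. Concretely: since $\phi_2(2)=0$ the coefficient $a(g)$ vanishes for every even $g$, so $\sum_{j\ge0}a(3\cdot 2^j)=a(3)=-\tfrac1{10}$, and $s(T)$ jumps by exactly $-\tfrac1{10}$ at every $T=3\cdot 2^k$ (for instance $s(6-\varepsilon)=2$ while $s(6+\varepsilon)=\tfrac{19}{10}$). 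No estimate of the form $s(T)=c+O\big({\cal L}(T)^{-1}\big)$, uniform in real $T\ge2$, is therefore possible; what your framework actually proves is $s(T)=\sigma\big(\langle\log_2T\rangle\big)+O\big({\cal L}(T)^{-1}\big)$ for a nonconstant limit function $\sigma$ of mean value $1/(C\log 2)$. Note that the paper's own first step (existence of the full limit) meets the same obstruction, since the generating Dirichlet series has simple poles at $s=2\pi in/\log 2$, $n\ne0$, on the line $\Re e (s)=0$; this discrepancy should be reported rather than patched.
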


La m\'ethode de la preuve permettrait d'obtenir le r\'esultat lorsque l'on remplace la fonction $g\mapsto \mu(g)\phi_2(g)/\phi_1(g)\phi_{-2}(g)   $ par une   fonction  telle  que sa s\'erie de Dirichlet soit proche de~$1/\zeta(s).$ Nous observons aussi la formule
$$\sum_{  g=1}^\infty
\frac{\mu(g)\phi_2(g)  { \log g}}{g\phi_1(g)\phi_{-2}(g)   }  =-\frac1C. $$ 
Une cons\'equence de ce r\'esultat est la formule suivante
$$\sum_{ g=1}^\infty  
\frac{\mu(g)\phi_2(g)  }{g\phi_1(g)\phi_{-2}(g)   }
\Big\langle \frac{ \log g}{\log 2}
\Big\rangle=0,$$
o\`u $\langle t\rangle$ d\'esigne  la partie fractionnaire de $t$.
\begin{proof}[D\'emonstration  du Lemme~\ref{lemme s(T)}]
Nous avons deux \'etapes distinctes pour  estimer $s(T )$ : la premi\`ere est de montrer que cette quantit\'e admet une limite quand $T$ tend vers $+\infty$ ; l'autre est de d\'eterminer la vitesse de convergence vers cette limite. La quantit\'e $s(T )$ v\'erifie
$$s(T )  =\sum_{\substack{ g2^m\leq T\\ m\geq 1 }} 
\frac{\mu(g)\phi_2(g)  }{g\phi_1(g)\phi_{-2}(g)   },
$$
c'est donc la fonction sommatoire associ\'ee \`a la s\'erie de Dirichlet
\begin{equation*}
    \begin{split}  
F(s)&:=\sum_{\substack{ g\geq 1\\ m\geq 1 }} 
\frac{\mu(g)\phi_2(g)  }{g^{1+s}\phi_1(g)\phi_{-2}(g) 2^{ms}  }
\cr&=\frac{1}{2^s-1}\sum_{\substack{ g\geq 1  }} 
\frac{\mu(g)\phi_2(g)  }{g^{1+s}\phi_1(g)\phi_{-2}(g)    }
=\frac{G(s)}{(2^s-1)\zeta(s+1)}\end{split}\quad\qquad (\Re e (s)>0),
\end{equation*}
o\`u $G $ est un produit eul\'erien absolument convergent dans le domaine $\Re e(s)>-\tfrac 12$. Le th\'eor\`eme~II.1.6 de \cite{Te08} permet de montrer  que $\lim_{T\to+\infty}s (T )=G(0)/\log 2=1/(C\log 2). $ Nous remercions G\'erald Tenenbaum de nous avoir indiqu\'e ce r\'esultat. 

En prenant la notation $\vartheta_T=\langle \frac{\log T}{\log 2}\rangle \log 2 $ o\`u nous rappelons que $\langle t\rangle$ d\'esigne la partie fractionnaire de $t$, nous avons
$$s(T )  =\sum_{\substack{ 1\leq g\leq T }} 
\frac{\mu(g)\phi_2(g)  }{g\phi_1(g)\phi_{-2}(g)   }
\Big\lfloor \frac{ \vartheta_T-\log (g)}{\log 2}
\Big\rfloor+ O\big( {\cal L}(T)^{-1} \big).$$
Maintenant fixons $\vartheta\in [0,\log 2[$ et restreignons-nous au cas des $T$ tels que $\vartheta_T=\vartheta.$
Nous avons alors
\begin{align*}
    s(T ) & =\sum_{\substack{ 1\leq g\leq T }} 
\frac{\mu(g)\phi_2(g)  }{g\phi_1(g)\phi_{-2}(g)   }
\Big\lfloor \frac{ \vartheta -\log (g)}{\log 2}
\Big\rfloor+ O\big( {\cal L}(T)^{-1} \big)
\cr&=-\sum_{1\leq m\leq (\log T)/\log 2} m\sum_{\e^{\vartheta}2^{m-1}\leq g<\e^{\vartheta}2^{m}}\frac{\mu(g)\phi_2(g)  }{g\phi_1(g)\phi_{-2}(g)   }
+ O\big( {\cal L}(T)^{-1} \big).
\end{align*}
La somme en $m$ est une somme de terme g\'en\'eral $O\big(\e^{-2\sqrt{  m}}\big)$ donc absolument convergente. Elle converge vers une limite $\ell_\vartheta$ et v\'erifie 
$$ s(T ) =\ell_\vartheta+  O\big( {\cal L}(T)^{-1} \big).$$
Par unicit\'e de la limite et la premi\`ere \'etape, nous avons $\ell_\vartheta=G(0)/\log 2$ ce qui fournit l'estimation requise.
\end{proof}

\begin{proof}[D\'emonstration du Lemme \ref{LemmeestC1}.]
Le principe d'inclusion-exclusion permet d'\'eviter les conditions de $h^3$-friabilit\'e des variables \`a sommer. 
\`A l'aide du facteur oscillant $\mu(g)$, nous  nous restreignons d\`es que possible au cas o\`u $g$ est petit. 
Dans tous les calculs qui suivent, nous d\'etaillons le cas o\`u $\delta=(0,0)$. Les autres cas peuvent \^etre  trait\'es de la m\^eme mani\`ere.

Nous avons\begin{align*}
C(h,T;(0,0) )
&=\sum_{\substack{P(gxyz)\leq h^3  \\ X>Y>Z}}\frac{\mu(g)\phi_2(g) \mu(gxyz)^2}{\phi(g)\phi_1(g)\phi( x  yz)}
- \sum_{\substack{P(gxyz)\leq h^3 \\ g YZ\leq hT \\ X>Y>Z}}\frac{\mu(g)\phi_2(g) \mu(gxyz)^2}{\phi(g)\phi_1(g)\phi( x  yz)}\cr&\quad-\sum_{\substack{    gX\leq hT\\ X>Y>Z}}\frac{\mu(g)\phi_2(g) \mu(gxyz)^2}{\phi(g)\phi_1(g)\phi( x  yz)}+ \sum_{\substack{ g YZ\leq hT,   g X\leq hT\\ X>Y>Z}}\frac{\mu(g)\phi_2(g) \mu(gxyz)^2}{\phi(g)\phi_1(g)\phi( x  yz)}.
    \end{align*} Les autres sommes $C(h,T;\delta )$ se d\'ecomposent de la m\^eme mani\`ere.
Appelons $C_{j}(h,T;\delta)$ la $j$-i\`eme somme et $C_{j}(h,T )$ la somme pond\'er\'ee associ\'ee. 
Par sym\'etrie 
$$C_{1}(h,T )= \sum_{\substack{P(gxyz)\leq h^3  }}  \frac{\mu(g)\phi_2(g) \mu(gxyz)^2}{\phi(g)\phi_1(g)\phi( x  yz)}.$$
Un calcul facile fournit 
\begin{equation}
    \label{estC11} C_{1}(h,T  ) = \sum_{P(m)\leq h^3} \frac{\mu(m)^2}{\phi(m)}\prod_{p\mid m} \Big(3-\frac{1-2/p}{1-1/p}\Big)=\Big(\frac{q}{\phi(q)}\Big)^2.
    \end{equation}

Nous avons
\begin{align*} C_{2}(h,T;(0,0))&=\sum_{\substack{P(gxyz)\leq h^3 \\ g YZ\leq hT \\ X>Y>Z}}\frac{\mu(g)\phi_2(g) \mu(gxyz)^2}{\phi(g)\phi_1(g)\phi( x  yz)}
\cr&=
\sum_{\substack{P(gxyz)\leq h^3 \\ g YZ\leq hT \\  Y>Z}}\frac{\mu(g)\phi_2(g) \mu(gxyz)^2}{\phi(g)\phi_1(g)\phi( x  yz)}-
\sum_{\substack{P(gxyz)\leq h^3 \\ g YZ\leq hT \\  Y>Z, Y\geq X}}\frac{\mu(g)\phi_2(g) \mu(gxyz)^2}{\phi(g)\phi_1(g)\phi( x  yz)}.
\end{align*} 
Notons $C_{21}(h,T;(0,0))$ et $C_{22}(h,T;(0,0))$ ces deux sommes. La somme sur $x$ dans la premi\`ere somme est compl\`ete de sorte que 
\begin{equation}
    \label{C21=c21}
C_{21}(h,T )= \frac {3q}{\phi(q)} c_{21}(h,T ),\end{equation}
avec 
$$c_{21}(h,T ):= \sum_{\substack{ (g,y,z)\\ g YZ\leq hT   }} 
\frac{\mu(g)\phi_2(g) \mu(gyz)^2}{g\phi_1(g)     yz }.$$
Quitte \`a \'ecarter les cas $YZ\leq h$, puis $g>T$ et les $Y,Z\leq T$, nous avons
$$c_{21}(h,T ):= \sum_{\substack{ (g,y,z)\\  h<YZ\leq hT/g,\, g\leq T\\ Y,Z>T   }} 
\frac{\mu(g)\phi_2(g) \mu(gyz)^2}{g\phi_1(g)     yz }+ O \big(  (\log T)^3+ 
(\log h)^2{\cal L}(T)^{-1} \big).$$
De plus, pour tout $Y,Z$ satisfaisant \`a $Y,Z>T,$ une double application du Lemme \ref{lemme sum xi} fournit
$$\sum_{\substack{ Y\leq y<2Y\\  Z\leq  z<2Z  }} 
\frac{  \mu(gyz)^2}{     yz }
=\frac{C(\log 2)^2\mu(g)^2}{\phi_{-2}(g)}+O\Big( \frac{\psi_{1/2}(g)^2}{T^{1/2}} \Big).$$

Nous utilisons aussi l'estimation
\begin{align*}
 {\rm card}\{ (Y,Z)\,:\,   h<YZ\leq hT/g, Y,Z>T \} 
&=\Big( \frac{\log h}{\log 2}+O(\log T)\Big)\Big(\Big\lfloor \frac{\log (hT/g)}{\log 2}
\Big\rfloor -
\Big\lfloor \frac{\log h}{\log 2}
\Big\rfloor \Big)\cr
&=\Big( \frac{\log h}{\log 2}+O(\log T)\Big) \Big\lfloor \frac{\vartheta_h+\log (T/g)}{\log 2}
\Big\rfloor ,\end{align*}
o\`u $\vartheta_h=\langle \frac{\log h}{\log 2}\rangle \log 2.$
En rappelant la d\'efinition \eqref{defs(T)}, nous obtenons 
\begin{align*}c_{21}(h,T ) 
=&C(\log 2)(\log h)  s(\e^{\vartheta_h}T ) 
+O\big((\log T)^3+ (\log h)^2{\cal L}(T)^{-1} \big).\end{align*}
Ici,   la contribution des $g\in ]T,\e^{\vartheta}T]$ dans $s(\e^{\vartheta}T )$ est nulle puisque le facteur  $\Big\lfloor \frac{\vartheta_h+\log (T/g)}{\log 2}
\Big\rfloor $ vaut~$0 $.

Le Lemme \ref{lemme s(T)} fournit alors l'estimation
$$c_{21}(h,T )=\log h+  O\big((\log T)^3 +(\log h)^2{\cal L}(T)^{-1}\big)$$
et, puis en reportant dans \eqref{C21=c21},
\begin{equation}\label{est C21}
    C_{21}(h,T ) = \frac {3q}{\phi(q)} \log h
   +  O\big((\log h)(\log T)^3 + (\log h)^3{\cal L}(T)^{-1}\big). 
\end{equation}

 Nous estimons maintenant   la seconde somme $C_{22}(h,T; (0,0))$ d\'efinie par
$$C_{22}(h,T; (0,0)):=\sum_{\substack{(g,x,y,z)\\ g YZ\leq hT \\  Y>Z, Y\geq X  }} 
\frac{\mu(g)\phi_2(g) \mu(gxyz)^2}{\phi(g)\phi_1(g)\phi( x  yz)}.$$
Nous  pouvons nous restreindre au cas $YZ>h$ puisque   il est ais\'e d'obtenir 
\begin{equation}
    \label{somgmu}
\sum_{g\leq G}\frac{\mu(g)\phi_2(g) \mu(gxyz)^2}{\phi(g)\phi_1(g)      }\ll \psi_{1/2}(xyz){\cal L}(G)^{-1}.\end{equation}
Cette premi\`ere r\'eduction permet de se restreindre au cas $g\leq T$, puis $X,Y,Z>T$.
Nous obtenons 
$$C_{22}(h,T; (0,0))= 
\sum_{\substack{(g,x,y,z)\\  h<  YZ\leq hT/g\\  g\leq T \\ T<X\leq Y, T<Z<Y   }} 
\frac{\mu(g)\phi_2(g) \mu(gxyz)^2}{\phi(g)\phi_1(g)\phi( x  yz)}
+O \big((\log h)(\log T)^3 +(\log h)^2{\cal L}(T)^{-1} \big).$$
De la m\^eme mani\`ere que pour $C_{21}(h,T)$, une triple application du Lemme \ref{lemme sum xi} fournit
$$C_{22}(h,T; (0,0))=C(\log 2)^3
\sum_{\substack{ h<  YZ\leq hT/g\\  g\leq T \\ X\leq Y, Z<Y   }} 
\frac{\mu(g)\phi_{2}(g)  }{g\phi_1(g)\phi_{-2}(g)}
+O \big((\log h)(\log T)^3 +(\log h)^2{\cal L}(T)^{-1} \big).$$
Nous avons enlev\'e ici les conditions $X,Y,Z>T$. Nous utilisons les notations \eqref{defXYZ}.
Nous fixons $m=k+\ell$ et nous avons  $0\leq \ell<m/2$, $0\leq j\leq m-\ell.$ 
Nous obtenons 
\begin{align*}&C_{22}(h,T; (0,0)) =\frac {3C(\log 2)^3}{8 } 
\sum_{\substack{ h<  2^m\leq hT/g\\  g\leq T    }} 
\frac{\mu(g)\phi_2(g)  }{g\phi_1(g)\phi_{-2}(g)}\big( m^2+O(m)\big)
\cr&\quad +O \big((\log h)(\log T)^3 +(\log h)^2{\cal L}(T)^{-1} \big).
\cr
&=\frac {3C \log 2 }{8 }  (\log h)^2 
\sum_{\substack{ h<  2^m\leq hT/g\\  g\leq T    }} 
\frac{\mu(g)\phi_2(g)  }{g\phi_1(g)\phi_{-2}(g)}  
+O \big( (\log h)(\log T)^3+(\log h)^2{\cal L}(T)^{-1} \big)
\cr
&=\frac {3C\log 2}{8 } (\log h)^2 
\sum_{\substack{    g\leq T    }} 
\frac{\mu(g)\phi_2(g)  }{g\phi_1(g)\phi_{-2}(g)}\Big\lfloor \frac{\vartheta_h+\log (T/g)}{\log 2}
\Big\rfloor 
+O \big( (\log h)(\log T)^3+(\log h)^2{\cal L}(T)^{-1} \big)
\cr
&=\frac {3C\log 2}{8 } (\log h)^2 s(T\e^{\vartheta_h}) +O \big( (\log h)(\log T)^3+(\log h)^2{\cal L}(T)^{-1} \big).\end{align*}
Le Lemme \ref{lemme s(T)} fournit alors  
\begin{equation}\label{est C22}
C_{22}(h,T; \delta)   =  \frac38  (\log h)^2+O \big((\log h)(\log T)^3 +(\log h)^2{\cal L}(T)^{-1} \big).\end{equation}
 Ici, nous ne fournissons pas plus de d\'etails dans les cas $\delta\neq (0,0) $ pour lesquels nous obtenons la m\^eme estimation. 
En rassemblant \eqref{est C21} et \eqref{est C22}, nous obtenons 
\begin{equation}
    \label{estC12}
C_{2}(h,T)=\frac {3q}{  \phi(q)}\log h- \frac94  (\log h)^2+O \big((\log h)(\log T)^3+(\log h)^3{\cal L}(T)^{-1} \big).\end{equation} 

Nous estimons $C_{3}(h,T)$ en suivant les m\^emes \'etapes que pour $C_{22}(h,T)$. Nous commen\c cons par   restreindre le comptage au domaine $ X>h$, $g\leq T$, $Z>T$, puis estimons les sommes en $x,y,z$ gr\^ace \`a une triple application du Lemme \ref{lemme sum xi}.  
Il vient 
\begin{align*}
C_{3}(h,T; & (0,0)) =\sum_{\substack{(g,x,y,z)\\    g X\leq hT\\ X>Y>Z}}\frac{\mu(g)\phi_2(g) \mu(gxyz)^2}{\phi(g)\phi_1(g)\phi( x  yz)}\cr
&= C(\log 2)^3
\sum_{\substack{ h<  X\leq hT/g\\  g\leq T \\ X>Y>Z  }} 
\frac{\mu(g)\phi_2(g)  }{g\phi_1(g)\phi_{-2}(g)}
+O \big( (\log h)(\log T)^3+(\log h)^2{\cal L}(T)^{-1} \big)
 \end{align*}
 Pour $X=2^j$ fix\'e, le nombre de couples $(k,\ell)$ est \'egal \`a 
 $$=\frac{j^2}{2}+O(j)=\frac{(\log h)^2}{2(\log 2)^2} +O\big((\log h)(\log T)\big).$$ Nous en d\'eduisons 
\begin{align*}C_{3}(h,T;   (0,0))  &=\frac {C\log 2}{2 }(\log h)^2s(\e^{\vartheta_h}T)
+O \big( (\log h)(\log T)^3+(\log h)^2{\cal L}(T)^{-1} \big)
\cr &=\frac {1}{2 }(\log h)^2
+O \big( (\log h)(\log T)^3+(\log h)^2{\cal L}(T)^{-1} \big)
,
 \end{align*}o\`u la derni\`ere \'egalit\'e d\'ecoule du Lemme \ref{lemme s(T)}.
Comme les sommes des cas  $\delta\neq (0,0)$ v\'erifient la m\^eme estimation,   nous obtenons 
 \begin{equation}\label{estC13-2}
    \begin{split}
 C_{3} (q,T)&=3(\log h)^2
+O \big( (\log h)(\log T)^3+(\log h)^2{\cal L}(T)^{-1} \big).
\end{split}
\end{equation}

De m\^eme, sans indiquer   les d\'etails, nous obtenons 
\begin{align*}  
C_{4}(h,T;(0,0))&=\sum_{\substack{(g,x,y,z)\\ g YZ\leq hT,   g X\leq hT\\ X>Y>Z}}\frac{\mu(g)\phi_2(g) \mu(gxyz)^2}{\phi(g)\phi_1(g)\phi( x  yz)}
\cr&=
\sum_{\substack{(g,x,y,z)\\ h<  YZ\leq hT/g \\   X\leq h ,\, g\leq T\\  X>Y>Z}}\frac{\mu(g)\phi_2(g) \mu(gxyz)^2}{\phi(g)\phi_1(g)\phi( x  yz)}+
\sum_{\substack{(g,x,y,z)\\ h<    X\leq hT/g\\   YZ\leq h ,\, g\leq T \\ X>Y>Z}}\frac{\mu(g)\phi_2(g) \mu(gxyz)^2}{\phi(g)\phi_1(g)\phi( x  yz)}\cr&\quad + O \big((\log h)(\log T)^3 +(\log h)^2{\cal L}(T)^{-1}\big)\cr
&= 
\frac58{(\log h)^2} +O \big((\log h)(\log T)^3+(\log h)^2{\cal L}(T)^{-1} \big).
\end{align*}Il en d\'ecoule donc 
\begin{equation}\label{estC14}C_{4}(h,T)=
\frac{15}4{(\log h)^2} +O \big((\log h)(\log T)^3+(\log h)^2{\cal L}(T)^{-1} \big).\end{equation}  
En rassemblant les estimations \eqref{estC11}, \eqref{estC12},  \eqref{estC13-2}, \eqref{estC14},  nous obtenons le Lemme \ref{LemmeestC1}.
\end{proof}

\section{Premi\`ere m\'ethode de comptage}
\subsection{Pr\'eliminaires}

Nous rappelons la d\'efinition \eqref{est V3} de $V_3(h)$.
Nous utilisons la d\'ecomposition  \eqref{def qj} et la condition \eqref{congruence avec aj}.  
 Gr\^ace \`a des raisons de sym\'etrie, nous pouvons ordonner les variables $X,Y,Z$. 
Nous consid\'erons
\begin{equation}
    \label{calculV3delta}
V_3(h;\delta):=\sum_{\substack{P(gxyz)\leq h^3\\ (x,y,z)\in D(\delta) \\  gyz, gxz,gxy\geq 2}}\frac{\mu(g)\mu(gxyz)^2}{\phi(g)\phi(gxyz)^2}V(g,x,y,z;h),
\end{equation}
lorsque $\delta\in \{ (1,1),(1,0),(0,1),(0,0)\}$ o\`u les $D(\delta)$ ont \'et\'e introduits en \eqref{def ens D} et $V$ a \'et\'e d\'efini en~\eqref{def Vh}.

Le lemme suivant permet de toujours se placer dans le cas $
X\geq Y\geq Z \geq 1$. 
\begin{lemma}\label{lemme decoupage} Lorsque $h\geq 2$, nous avons 
$$V_3(h)=
V_3\big(h;(1,1)\big)+2V_3\big(h;(1,0)\big)+2V_3\big(h;(0,1)\big)+ V_3\big(h;(0,0)\big).$$ 
\end{lemma}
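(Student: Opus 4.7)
L'approche consiste \`a exploiter la sym\'etrie de la somme d\'efinissant $V_3(h)$ sous l'action du groupe $S_3$ des permutations des variables $(x,y,z)$, puis \`a d\'enombrer correctement les orbites lorsqu'on partitionne le comptage selon l'ordre relatif des dyadiques $(X,Y,Z)$.

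D'abord, je v\'erifierai que le sommande en $(g,x,y,z)$ figurant dans \eqref{calculV3} est invariant sous toute permutation $\sigma \in S_3$ agissant sur $(x,y,z)$. Le pr\'efacteur $\mu(g)\mu(gxyz)^2/\{\phi(g)\phi(gxyz)^2\}$ et les conditions $P(gxyz) \leq h^3$ et $gyz,gxz,gxy\geq 2$ sont manifestement sym\'etriques. Pour la somme int\'erieure $V(g,x,y,z;h)$ de \eqref{def Vh}, la d\'efinition \eqref{def qj} montre qu'\'echanger deux variables parmi $(x,y,z)$ \'echange les deux d\'enominateurs $q_j$ correspondants ; en op\'erant l'\'echange conjoint sur les num\'erateurs $a_j$, la congruence \eqref{congruence avec aj}, les conditions $1\leq a_j\leq q_j$, $(a_j,q_j)=1$ et le produit $\prod_{j=1}^3 E_h(a_j/q_j)$ sont tous pr\'eserv\'es.

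Ensuite, je partitionnerai l'ensemble des $(x,y,z)\in\N^3$ selon le type d'ordre du triplet dyadique $(X,Y,Z)$ d\'efini en \eqref{defXYZ}. Sous l'action de $S_3$, ces types d'ordre se r\'epartissent en quatre orbites, dont les repr\'esentants canoniques sont $X>Y>Z$ (orbite de taille $6$), $X=Y>Z$ (taille $3$), $X>Y=Z$ (taille $3$) et $X=Y=Z$ (taille $1$). En notant $V^{>}, V^{=>}, V^{>=}, V^{=}$ les contributions respectives de ces repr\'esentants \`a $V_3(h)$, la sym\'etrie du sommande donne imm\'ediatement
\begin{equation*}
V_3(h) = 6V^{>} + 3V^{=>} + 3V^{>=} + V^{=}.
\end{equation*}
D'apr\`es les d\'efinitions \eqref{def ens D}, on a $V_3(h;(0,0))=V^{>}$, $V_3(h;(1,0))=V^{>}+V^{=>}$, $V_3(h;(0,1))=V^{>}+V^{>=}$ et $V_3(h;(1,1))=V^{>}+V^{=>}+V^{>=}+V^{=}$. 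En formant la combinaison $V_3(h;(1,1))+2V_3(h;(1,0))+2V_3(h;(0,1))+V_3(h;(0,0))$, les coefficients $1+2+2+1=6$, $1+2=3$, $1+2=3$ et $1$ apparaissent respectivement devant $V^{>}$, $V^{=>}$, $V^{>=}$ et $V^{=}$, ce qui restitue $V_3(h)$. L'argument est essentiellement combinatoire ; l'unique observation non triviale est la sym\'etrie du sommande en $(x,y,z)$, laquelle r\'esulte directement du r\^ole interchangeable des trois indices dans \eqref{def qj}--\eqref{def Vh}.
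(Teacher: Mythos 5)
Votre preuve est correcte et suit essentiellement la m\^eme d\'emarche que celle de l'article : l'ingr\'edient cl\'e est dans les deux cas la sym\'etrie du sommande sous l'action de $S_3$ sur $(x,y,z)$, combin\'ee \`a une partition des triplets dyadiques selon leurs relations d'ordre. Votre comptage par orbites ($6V^{>}+3V^{=>}+3V^{>=}+V^{=}$) n'est qu'une r\'eorganisation de la partition explicite en six sous-ensembles que l'article \'ecrit directement, et la v\'erification des coefficients $1+2+2+1$ co\"{\i}ncide bien avec l'\'enonc\'e.
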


\begin{proof}
     Nous notons $D^*$ l'ensemble des tripl\'es de puissance de $2$ d\'efinissant le tri\-pl\'e~$(X,Y,Z)$. 
 Nous avons
   \begin{align*}
     D^* =&
    \{ (X,Y,Z)\in D^*\,: \, X>Y\geq Z\}\cup \{ (X,Y,Z)\in D^*\,: \, X>Z>Y\}\cr&\cup \{ (X,Y,Z)\in D^*\,: \, Y\geq X\geq Z\}\cup \{ (X,Y,Z)\in D^*\,: \, Y\geq Z>X\}\cr&
    \cup \{ (X,Y,Z)\in D^*\,: \, Z\geq X> Y\}\cup \{(X,Y,Z)\in D^*\,: \, Z>Y\geq X\} .
  \end{align*}
  En exploitant la sym\'etrie et en renommant les variables, nous obtenons le r\'esultat annonc\'e.
\end{proof}

Compte-tenu de la notation \eqref{defEh+}, nous avons   \begin{align}
\label{majprodetderivee} & E_h(\alpha)\ll   E_h^+(\alpha),\qquad E'_h(\alpha)\ll   hE_h^+(\alpha).\end{align}
Pour estimer $V_3(h;\delta),$ nous scindons la somme en plusieurs
parties $S_j$ d\'efinies par des in\'egalit\'es.
Nous choisirons la notation $S_j^+$ lorsque l'on remplacera 
$\mu(g) $ par $\mu^2(g) $  et les $E_h$ par des $E_h^+$ de sorte que $|S_j|\ll  S_j^+.$

De plus, nous introduisons un param\`etre $T$ qui, \`a la fin, v\'erifiera $T=\exp\{150 (\log_2 h)^2\}$.

\subsection{Premier cas :  $gYZ=Q_1>hT$
et $gX=Q_2/Z>hT.$}

Soit $S_1(\delta)$ la contribution \`a la somme $V_3(h;\delta)$ du cas o\`u $Q_1>hT$ et $gX>hT.$ Nous rappelons la d\'efini\-tion~\eqref{def C1qT} et l'estimation du Lemme \ref{LemmeestC1} de $C(h,T;\delta)$. Nous montrons dans cette sous-section le r\'esultat suivant.

\begin{lemma}\label{lemme estS1} Soit $\delta\in \{ (1,1),(1,0),(0,1),(0,0)\}$. Lorsque $h\geq 2$ et $T\geq 1$, nous avons   l'estimation
    \begin{equation}\label{estS1} 
S_1(\delta)
= h C(h,T;\delta) 
+O\Big(  \frac{h}{T} (\log h)^{14}\Big).
\end{equation}
\end{lemma}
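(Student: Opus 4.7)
The first step is to reduce the inner sum $V(g,x,y,z;h)$ to a double sum over $(a_1, a_2)$. The condition $\sum_j a_j/q_j \in \Z$ is equivalent to $a_1 x + a_2 y \equiv 0 \pmod{z}$ together with $a_3$ determined modulo $q_3 = gxy$. Given $(a_1, q_1) = (a_2, q_2) = 1$, the coprimality $(a_3, q_3) = 1$ reduces (after using $(x,g)=(y,g)=1$) to the single condition $(a_1 x + a_2 y, g) = 1$. Thus
\[
V(g,x,y,z;h) = \sum_{(a_1,a_2)} g_s(a_1, a_2),
\]
where $(a_1, a_2)$ ranges over the set counted by $N(A_1, A_2; g, x, y, z)$ of Lemma~\ref{lemme estNA1A2}, and $g_s(t_1, t_2) := E_h(t_1/q_1) E_h(t_2/q_2) E_h(-t_1/q_1 - t_2/q_2)$ is the natural $C^1$ extension of the integrand.

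Next, apply the two-dimensional Abel summation formula \eqref{eqsomdabel} and split $S(A_1, A_2; f) = N(A_1, A_2; g, x, y, z) = M(A_1, A_2) + R(A_1, A_2)$, where $M(t_1, t_2) := \phi_1(q_1) \phi_2(g) \phi_1(x)\, t_1 t_2 / z$ is the main term of Lemma~\ref{lemme estNA1A2}. Since $M(t_1, 0) = M(0, t_2) = 0$, the smoothed identity~\eqref{somdabellisse} transforms the contribution of $M$ into
\[
\frac{\phi_1(q_1) \phi_2(g) \phi_1(x)}{z} \int_0^{q_1} \int_0^{q_2} g_s(t_1, t_2)\, \d t_1 \, \d t_2.
\]
The change of variables $\alpha_j = t_j/q_j$ and the elementary identity
\[
\int_0^1 \int_0^1 E_h(\alpha_1) E_h(\alpha_2) E_h(-\alpha_1 - \alpha_2)\, \d \alpha_1\, \d \alpha_2 = h,
\]
obtained by expanding each $E_h$ as an exponential sum over $[1,h]$ and isolating the diagonal $d_1 = d_2 = d_3$, convert this into $h q_1 q_2 \phi_1(q_1) \phi_2(g) \phi_1(x)/z$. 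Substituting into \eqref{calculV3delta} and simplifying via $\phi(n) = n\phi_1(n)$ with the multiplicativity of $\phi_1$, the sum over $(g,x,y,z)$ in the prescribed range collapses to precisely $h \cdot C(h, T; \delta)$ as defined in \eqref{def C1qT}.

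The error comes from two sources: first, the remainder $R = N - M$ in Lemma~\ref{lemme estNA1A2}, of size $2^{\omega(q_1)} A_2/z + 2^{\omega(x)} 3^{\omega(g)} A_1 + 2^{\omega(q_1)} 2^{\omega(x)} 3^{\omega(g)}$; second, the boundary and derivative contributions arising in~\eqref{eqsomdabel}. The bounds $|E_h| \ll E_h^+$ and $|E_h'| \ll h E_h^+$ from~\eqref{majprodetderivee} reduce each term to an integral of products of three $E_h^+$ in the arguments $t_j/q_j$, which Lemma~\ref{calculmajI} controls with only polylogarithmic loss. The hypotheses $Q_1 > hT$ and $gX > hT$ ensure that the effective windows for $a_1$ and $a_2/z$ exceed $h$ by a factor of at least $T$, so the ratio of $R$ to $M$ gains a factor $T^{-1}$. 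Summing against the weight $\mu(gxyz)^2/(\phi(g)\phi(gxyz)^2)$ over $P(gxyz) \le h^3$ then yields the announced bound $O(h(\log h)^{14}/T)$.

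The main obstacle is the accounting of logarithmic losses through the nested summations: each integral against $E_h^+$-products contributes up to two powers of $\log h$ via Lemma~\ref{calculmajI}, and summing $2^{\omega(\cdot)}$ or $\psi_{1/2}$-type factors over the dyadic boxes in $(g,x,y,z)$ introduces further logarithmic factors through Mertens-type estimates. The generous exponent $14$ is not meant to be sharp; it provides ample slack to absorb these losses without tight bookkeeping.
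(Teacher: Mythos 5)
Your proposal is correct and follows essentially the same route as the paper: reduction of $V(g,x,y,z;h)$ to a double sum over $(a_1,a_2)$ with the congruence modulo $z$, two-dimensional Abel summation combined with the smoothed identity \eqref{somdabellisse} applied to the main term of Lemma~\ref{lemme estNA1A2}, the circle-method evaluation $\int\!\!\int E_h E_h E_h = h$ over a full period, control of the remainder via \eqref{majprodetderivee} and Lemme~\ref{calculmajI}, and the gain of $T^{-1}$ from the hypotheses $Q_1>hT$, $gX>hT$ before summing the arithmetic weights. The only cosmetic difference is your choice of the range $[0,q_j]$ for $a_j$ where the paper uses the symmetric range $|a_j|\leq q_j/2$; both are full periods and lead to the same computation.
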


\begin{proof}
Pour estimer $V(g,x,y,z;h)$, nous utilisons une sommation d'Abel par rapport aux  deux variables $a_1$ et $a_2$ (voir Lemme~\ref{somdabel}). Nous prenons les $a_j$ tels que $|a_j|\leq q_j/2$.  Le terme principal issu de l'utilisation de l'estimation~\eqref{est1NA1A2} du Lemme~\ref{lemme estNA1A2} est trait\'e \`a l'aide de \eqref{somdabellisse}. Les termes d'erreur sont contr\^ol\'es gr\^ace aux calculs d\'ecrits au Lemme~\ref{calculmajI} et aux majorations \eqref{majprodetderivee}.
Nous obtenons
\begin{equation}\label{est4}
    \begin{split}  V(g,x,y,z;h)& = 
\sum_{\substack{1\leq |a_1|\leq q_1/2\\ (a_1,q_1)=1}} \sum_{\substack{ 1\leq |a_2|\leq q_2/2\\  a_2\equiv -a_1xy^{-1}\bmod z\\ (a_2 ,gx)=(a_1x+a_2y,g )=1}}E_h\Big(\frac{a_1}{q_1}\Big)E_h\Big(\frac{a_2}{q_2}\Big)E_h\Big(-\frac{a_1}{q_1}-\frac{a_2}{q_2}\Big)
\cr&= \frac{ { \phi_1(q_1)\phi_2(g) \phi_1(x)}}{z}I(q_1,q_2;h)
  \cr&\quad +O\Big((\log h)^2\Big\{2^{\omega(x)}3^{\omega(g)} Q_1h^2+ 2^{\omega(q_1)}h^2\frac{Q_2}{Z}
  +h^32^{\omega(q_1)}  2^{\omega(x)}3^{\omega(g)}\Big\}\Big),
\end{split}\end{equation}
avec
\begin{align*}I(q_1,q_2;h)&:= \sum_{(\varepsilon_1,\varepsilon_2)\in \{\pm 1\}^2}\int_{0}^{q_1/2}\int_{0}^{q_2/2}E_h\Big(\frac{\varepsilon_1\alpha_1}{q_1}\Big)E_h\Big(\frac{\varepsilon_2\alpha_2}{q_2}\Big)E_h\Big(-\frac{\varepsilon_1\alpha_1}{q_1}-\frac{\varepsilon_2\alpha_2}{q_2}\Big)\d \alpha_2\d \alpha_1\cr&= \int_{-q_1/2}^{q_1/2}\int_{-q_2/2}^{q_2/2}E_h\Big(\frac{\alpha_1}{q_1}\Big)E_h\Big(\frac{\alpha_2}{q_2}\Big)E_h\Big(-\frac{\alpha_1}{q_1}-\frac{\alpha_2}{q_2}\Big)\d \alpha_2\d \alpha_1  
  =hq_1q_2 .\end{align*} 
En effet, comme le membre de droite de~\eqref{eqsomdabel}, le terme d'erreur issu de l'application du Lemme~\ref{somdabel}  comprend quatre termes. D\'etaillons le terme correspondant au dernier terme de~\eqref{eqsomdabel}. D'apr\`es les majorations du Lemme~\ref{calculmajI}, il est 
\begin{align*}
    \int_{-q_1/2}^{q_1/2}\int_{-q_2/2}^{q_2/2} &\frac{ \big(2^{\omega(q_1)}   {  t_2}/{Z}+2^{\omega(x)}3^{\omega(g)}t_1
+2^{\omega(q_1)}  2^{\omega(x)}3^{\omega(g)}\big)  h^2\d t_1\d t_2}{(|t_1/q_1|+1/h)(|t_2/q_2|+1/h)(||t_1/q_1+t_2/q_2||+1/h)}\cr&\ll
q_1q_2
\int_{-1/2}^{1/2}\int_{-1/2}^{1/2}  \frac{ \big(2^{\omega(q_1)}   {  q_2t_2}/{Z}+2^{\omega(x)}3^{\omega(g)}Q_1t_1
+2^{\omega(q_1)}  2^{\omega(x)}3^{\omega(g)}\big)  h^2\d t_1\d t_2}{(|t_1|+1/h)(|t_2|+1/h)(||t_1+t_2||+1/h)}\cr&
\ll 
(\log h)^2\Big\{2^{\omega(x)}3^{\omega(g)} Q_1h^2+ 2^{\omega(q_1)}h^2\frac{Q_2}{Z}
  +h^32^{\omega(q_1)}  2^{\omega(x)}3^{\omega(g)}\Big\}.
\end{align*} 
Nous ne fournissons pas les d\'etails pour les trois autres termes.

Lorsque $gYZ>hT\geq 2,$  $g X>hT\geq 2$,    nous avons toujours 
$gyz, gxz,gxy\geq 2 .$ 
Avec les restrictions $gYZ>hT,$  $g X>hT$, la contribution du terme principal est donc
$$\sum_{\substack{P(gxyz)\leq h^3 \\ gYZ>hT,   g X>hT\\ (x,y,z)\in D(\delta)}}\frac{\mu(g)\mu(gxyz)^2}{\phi(g)\phi(gxyz)^2}{ \phi_1(q_1)\phi_2(g) \phi_1(x)}\frac{hq_1q_2}z  = h C(h,T;\delta)  ,$$
avec $C(h,T;\delta)$ d\'efini en \eqref{def C1qT}.
La contribution du  terme  d'erreur dans la somme $S_1(\delta)$  
est 
\begin{align*}
    &\ll  \frac{h}{T} (\log h)^2
 \sum_{\substack{P(gxyz)\leq h^3 \\ gX>hT\\ gYZ>hT}}
 2^{\omega(xyz)}6^{\omega(g)}  \frac{ \mu(gxyz)^2g^2xyz}{\phi(g)\phi(gxyz)^2} \ll  \frac{h}{T } (\log h)^{14}.
\end{align*}
Cela ach\`eve la preuve du Lemme~\ref{lemme estS1}.
\end{proof}

\subsection{Deuxi\`eme cas :  $gYZ=Q_1\leq h/T$
et $gX=Q_2/Z>h.$}

Soit $S_2^+(\delta)$ la somme des termes pris en module dans le cas   $gYZ=Q_1\leq h/T$
et $gX=Q_2/Z>hT$. Nous montrons dans cette sous-section le r\'esultat suivant.

\begin{lemma}\label{lemme estS2} Soit $\delta\in \{ (1,1),(1,0),(0,1),(0,0)\}$. Lorsque $h\geq 2$ et $T\geq 1$, nous avons   l'estimation
    \begin{equation}\label{estS2} 
S_2^+(\delta)\ll  \frac h{T}(\log h)^6 .
\end{equation}
\end{lemma}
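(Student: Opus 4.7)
The plan is first to establish the pointwise bound
\[
|V(g,x,y,z;h)|\ll g^{3}xy^{2}z^{2}\log h
\]
(which is simply $gxq_1^{2}\log h$ rewritten via $q_1=gyz$), and then to carry out the multiplicative sum that remains.

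For the pointwise bound I pass to absolute values using $|E_h|\leq E_h^+$. Fixing $a_1$ with $1\leq |a_1|\leq q_1/2$, the $\sim q_2/z=gx$ admissible $a_2$ are spaced by $1/(gx)\ll 1/h$, since $gx>hT/4\gg h$. A Riemann-sum approximation combined with the second estimate of Lemma~\ref{calculmajI} yields
\[
\sum_{\substack{|a_2|\leq q_2/2\\ a_2\equiv c_{a_1}(\bmod z)}} E_h^+(a_2/q_2)\,E_h^+\!\Big(\frac{a_1}{q_1}+\frac{a_2}{q_2}\Big) \ll gx(\log h)\,E_h^+(a_1/q_1).
\]
Since $q_1\leq 4h/T$ and $|a_1|\geq 1$, we have $|a_1/q_1|\geq 1/q_1>1/h$, so $|E_h(a_1/q_1)|\leq E_h^+(a_1/q_1)\leq q_1/|a_1|$. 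Summing over $a_1$ and using $\sum_{|a_1|\geq 1}|a_1|^{-2}=O(1)$ yields the displayed bound on $|V|$.

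Inserting this in the summand of $S_2^+(\delta)$ and using that $\mu^{2}(gxyz)=1$ forces pairwise coprimality, so $\phi(gxyz)=\phi(g)\phi(x)\phi(y)\phi(z)$, the summand becomes
\[
\frac{\mu^{2}(gxyz)\,|V|}{\phi(g)\phi(gxyz)^{2}}\ll (\log h)\Big(\frac{g}{\phi(g)}\Big)^{\!3}\!\Big(\frac{y}{\phi(y)}\Big)^{\!2}\!\Big(\frac{z}{\phi(z)}\Big)^{\!2}\!\frac{x}{\phi(x)^{2}}.
\]
I then estimate each sum in turn. The sum of $x/\phi(x)^{2}$ over squarefree $h^{3}$-friable $x>hT/g$ is $\ll (\log h)(\log_{2}h)$, via $x/\phi(x)^{2}\ll (\log_{2}h)/\phi(x)$ and Mertens's theorem; the sum of $(y/\phi(y))^{2}(z/\phi(z))^{2}$ over $(y,z)$ with $yz\leq h/(gT)$ is $\ll (h/(gT))\log h$, because the associated double Dirichlet series has a simple pole in each variable at $s=1$; and the $g$-sum of $(g/\phi(g))^{3}/g$ over $g\leq h/T$ is $\ll \log h$ for the same reason. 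Combining, $S_2^+(\delta)\ll (h/T)(\log h)^{4}\log_{2}h$, comfortably inside the claim.

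The subtle point will be justifying the Riemann-sum approximation in the presence of the coprimality conditions $(a_2,gx)=(a_1x+a_2y,g)=1$; these can be removed by M\"obius inversion at the cost of divisor-type factors $2^{\omega(gx)}\ll (\log h)^{O(1)}$, which are easily absorbed into the final logarithmic powers.
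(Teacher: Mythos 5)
Your proposal is correct and takes essentially the same approach as the paper: you drop the coprimality conditions (legitimate, since all summands are nonnegative after taking moduli, so no M\"obius inversion is even needed), compare the $a_2$-progression of spacing $1/(gx)\ll 1/(hT)$ to the integral bounded by the second estimate of Lemma~\ref{calculmajI}, and then exploit $q_1\leq 4h/T$ so that $\sum_{a_1}E_h^+(a_1/q_1)^2\ll q_1^2$ converges --- which is exactly the paper's mechanism, except that it performs an Abel summation in $a_2$ (picking up an extra $\log$ from the error term $R(a_1)$, recovered via $h<gx$) where you use a direct Riemann-sum comparison. The only blemish is the intermediate claim $x/\phi(x)\ll\log_2h$, which fails for $h^3$-friable $x$ of unbounded size (one should use $x/\phi(x)\ll\log_2(3x)\ll\log h$ or simply the Euler product $\sum_{x\mid q}\mu^2(x)x/\phi(x)^2\ll\log h$), but the resulting sum bound still holds and your final estimate lands comfortably inside $(h/T)(\log h)^6$.
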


\begin{proof}
  Nous adaptons l'argument pr\'ec\'edent au cas o\`u $Q_1$ est petit
  en majorant
 $$ V^+(g,x,y,z;h)  = 
\sum_{\substack{1\leq |a_1|\leq q_1/2 }} \sum_{\substack{ 1\leq |a_2|\leq q_2/2\\  a_2\equiv -a_1xy^{-1}\bmod z }}E_h^+\Big(\frac{a_1}{q_1}\Big)E_h^+\Big(\frac{a_2}{q_2}\Big)E_h^+\Big(-\frac{a_1}{q_1}-\frac{a_2}{q_2}\Big).$$
Une sommation d'Abel et la majoration 
$$\sum_{\substack{ 1\leq  \varepsilon_2a_2 \leq A_2\\  a_2\equiv -a_1xy^{-1}\bmod z }}1\ll\frac{A_2}z+1$$
lorsque $\varepsilon_2\in \{ \pm1\}$
fournissent  
\begin{equation}\label{est51}
    \begin{split} V^+(g,x,y,z;h)  &\ll  \sum_{\substack{1\leq |a_1|\leq q_1/2\\ (a_1,q_1)=1}}\Big\{ \frac{  1}{  z}I_2^+(a_1,q_1,q_2;h)
  + R(a_1) 
  \Big\},
\end{split}\end{equation}
avec
\begin{align*}
    I_2^+(a_1,q_1,q_2;h)&:= \sum_{ \varepsilon_2 \in \{\pm 1\} }\int_{0}^{q_2/2}E_h^+\Big(\frac{ a_1}{q_1}\Big)E_h^+\Big(\frac{\varepsilon_2\alpha_2}{q_2}\Big)E_h^+\Big(-\frac{ a_1}{q_1}-\frac{\varepsilon_2\alpha_2}{q_2}\Big)\d \alpha_2 
\end{align*}
et $R (a_1)$ le terme d'erreur v\'erifiant \begin{align*}
  R (a_1)  \ll& \frac{1 }{q_2}E_h^+\Big(\frac{a_1}{q_1}\Big)\int_{-q_2/2}^{q_2/2}\Big| 
    {E^+_h}'
    \Big(\frac{\alpha_2}{q_2}\Big)E_h^+\Big(-\frac{a_1}{q_1}-\frac{\alpha_2}{q_2}\Big)-E_h^+\Big(\frac{\alpha_2}{q_2}\Big) {E^+_h}'\Big(-\frac{a_1}{q_1}-\frac{\alpha_2}{q_2}\Big)\Big| \d \alpha_2
    \cr&+ E_h^+\Big(\frac{a_1}{q_1}\Big)\Big( E_h^+\Big(\frac{a_1}{q_1}\Big)+E_h^+\Big(\frac12+\frac{a_1}{q_1}\Big)\Big) .
\end{align*}

Nous utilisons \eqref{majprodetderivee} et la deuxi\`eme majoration du Lemme~\ref{calculmajI} de sorte  que le premier terme du majorant $R_1(a_1)$ v\'erifie
\begin{align*}
  R_1(a_1)  \ll&      \frac{h}{||a_1/q_1||}   \int_{-1/2}^{1/2} \frac{\d \alpha_2}{(||\alpha_2||+1/h)(||\alpha_2+a_1/q_1||+1/h)}\ll      \frac{h(\log h)}{||a_1/q_1||^2}  .
\end{align*}
Nous obtenons   
\begin{align*}
    R(a_1)\ll&    h(\log h)\Big( \frac{1}{||a_1/q_1||^2} 
    + \frac{q_1}{||a_1/q_1||}\Big)
    \ll  h(\log h)    \frac{q_1}{||a_1/q_1||}   
\end{align*}  
et
$$\sum_{\substack{1\leq |a_1|\leq q_1/2\\ (a_1,q_1)=1}}
    R (a_1)\ll    h{q_1^2}(\log h)(\log q_1) . 
    $$ 
    Nous avons aussi  $$
 I_2^+(a_1,q_1,q_2;h) \ll  \frac{ q_2  (\log h)}{||a_1/q_1||^2}. $$

En utilisant $h<gX$, $Q_1\leq h/T$, nous obtenons $$ V^+_h(g,x,y,z)\ll \frac{Q_1^2Q_2(\log h)}{z}+ hQ_1^2(\log h)(\log Q_1)\ll \frac{Q_1^2Q_2}{z}   {(\log h)^2} 
\ll \frac{hQ_1 Q_2(\log h)^2}{ZT}    .$$ Il en d\'ecoule
\begin{align*}\sum_{\substack{P(gxyz)\leq h^3 \\ gYZ\leq h/T,   g X>hT\\ (x,y,z)\in D(\delta)}}\frac{ \mu(gxyz)^2 V^+(g,x,y,z;h) }{\phi(g)\phi(gxyz)^2 }
&\ll \frac{h(\log h)^2}T\sum_{\substack{P(gxyz)\leq h^3 }}\frac{ \mu(gxyz)^2g^2xyz }{\phi(g)^3\phi(xyz)^2}      \ll  \frac h{T}(\log h)^6.\end{align*}
\end{proof}

\section{Deuxi\`eme m\'ethode de comptage}

\subsection{R\'eduction du domaine pour les valeurs de $a_1$ et $a_2.$} 
 Il reste donc \`a consid\'erer le cas $h/T<gYZ \leq hT$
et $gX >hT $ et le cas $gX\leq hT. $ 
Soit ${\mathcal E}$ un ensemble pour lesquels une des quatre in\'egalit\'es suivantes est v\'erifi\'ee 
$$|a_1|>TQ_1/h, \quad |a_2|>TQ_2/h, \quad |a_1|\leq Q_1/hT, \quad |a_2|\leq Q_2/hT.$$
Nous notons $\Sigma$ la somme des modules des termes somm\'es sur un tel ensemble.

\begin{lemma}\label{lemme Sigma}
    Nous avons
    $$\Sigma\ll \frac{h (\log h)^{8} }{  T } .$$
\end{lemma}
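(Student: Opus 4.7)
The plan is to adapt the two-variable Abel summation argument from Lemma~\ref{lemme estS1} to the restricted ``bad'' range $\mathcal{E}$. By the built-in symmetries $(a_1,q_1,x)\leftrightarrow(a_2,q_2,y)$ and $a_j\mapsto -a_j$ in the definition of $V(g,x,y,z;h)$, it suffices to handle two representative regions: Case~A where $|a_1|>TQ_1/h$, and Case~B where $|a_1|\leq Q_1/(hT)$.

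For each case I would mirror the derivation of~\eqref{est4}: apply Lemma~\ref{somdabel} in the variables $(a_1,a_2)$, insert the main term from Lemma~\ref{lemme estNA1A2}, and recognize via~\eqref{somdabellisse} the resulting main contribution as an integral of $E_h^+(t_1)E_h^+(t_2)E_h^+(t_1+t_2)$ over the restricted region in $(t_1,t_2)=(a_1/q_1,a_2/q_2)$. In Case~A, where $|t_1|>T/(4h)$, the bound $E_h^+(t_1)\leq 4h/T$ combined with the second estimate of Lemma~\ref{calculmajI} applied to the inner $t_2$-integral yields
\[
\int_{|t_1|>T/(4h)}\int_{|t_2|\leq 1/2} E_h^+(t_1)E_h^+(t_2)E_h^+(t_1+t_2)\,dt_2\,dt_1\ll \frac{h\log h}{T}.
\]
In Case~B the interval $|t_1|\leq 1/(hT)$ has length $\ll 1/(hT)$ while $E_h^+(t_1)\leq h$, and Lemma~\ref{calculmajI} again gives the same bound $h\log h/T$. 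The three error contributions of~\eqref{est4} coming from Lemma~\ref{lemme estNA1A2} are integrated against derivatives of $E_h^+$ on the same restricted region and are each reduced by a factor $1/T$ compared with the corresponding unrestricted integrals of Lemma~\ref{lemme estS1}.

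Summing over $(g,x,y,z)$ in the remaining domain (either $h/T<gYZ\leq hT$ with $gX>hT$, or $gX\leq hT$) with weight $\mu^2(gxyz)/(\phi(g)\phi(gxyz)^2)$, and using the Euler-factor bounds already deployed in Lemmas~\ref{lemme estS1} and~\ref{LemmeestC1}, the $1/T$ saving in the integral combines with $(\log h)^{O(1)}$ from the $g,x,y,z$-sum to give the target $h(\log h)^8/T$. The main technical obstacle is tracking the logarithmic exponent precisely enough: the key gain is that on the bad region the integral of $E_h^+(t_1)E_h^+(t_2)E_h^+(t_1+t_2)$ drops from size $h$ to size $h\log h/T$, and with careful bookkeeping (and the dyadic constraint $h/T<gYZ\leq hT$ contributing only $\log T$ rather than $\log h$) the exponent stays at $8$ rather than the $14$ of Lemma~\ref{lemme estS1}.
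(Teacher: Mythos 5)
Your treatment of the main term is sound: restricting the triple integral of $E_h^+(t_1)E_h^+(t_2)E_h^+(t_1+t_2)$ to $||t_1||>T/(4h)$ or to $||t_1||\leq 1/(hT)$ does bring it down from $h$ to $O(h\log h/T)$. The gap is in the error terms of the lattice-point count. The remainder in \eqref{est4}, namely $(\log h)^2\{2^{\omega(x)}3^{\omega(g)}Q_1h^2+2^{\omega(q_1)}h^2Q_2/Z+h^32^{\omega(q_1)}2^{\omega(x)}3^{\omega(g)}\}$, is acceptable in Lemme~\ref{lemme estS1} \emph{only} because both $Q_1=gYZ>hT$ and $gX=Q_2/Z>hT$ hold there: relative to the main term $hQ_1Q_2/Z$, the three pieces are smaller by factors $h/(gX)$, $h/Q_1$ and $h^2Z/(Q_1Q_2)$, each $<1/T$ on that domain. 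But Lemme~\ref{lemme Sigma} is invoked precisely on the complementary domains ($gX\leq hT$, or $h/T<gYZ\leq hT$), where for instance the congruence $a_2\equiv -a_1xy^{-1}\,(\bmod\ z)$ leaves only about $Q_2/(hz)=gX/h\leq T$ admissible $a_2$ per dyadic block, so the $O(1)$ per residue class in Lemme~\ref{lemme estNA1A2} is comparable to the main count. Concretely, on $gX\leq hT$ the piece $2^{\omega(q_1)}h^2Q_2/Z$ summed against the weights $\mu^2/(\phi(g)\phi(gxyz)^2)$ is of order $h^2(\log h)^{O(1)}$; even granting your claimed extra factor $1/T$ from restricting the $(t_1,t_2)$-integration (which in fact fails for the pieces weighted by $|t_j|$, since $|t_j|E_h^+(t_j)\leq 1$ is already concentrated away from $0$), one is left with $\gg h^2/T^2$, which exceeds $h(\log h)^8/T$ because $T=\exp\{150(\log_2h)^2\}=h^{o(1)}$. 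This is exactly why the paper abandons Poisson/Abel summation over $(a_1,a_2)$ at this stage and switches to its \og deuxi\`eme m\'ethode de comptage\fg.

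The paper's proof of Lemme~\ref{lemme Sigma} accordingly performs no summation by parts in $(a_1,a_2)$: it takes absolute values, localizes $x\sim X$, $y\sim Y$, $||a_j/q_j||\sim A_j/Q_j$ dyadically, bounds the product of the $E_h$'s pointwise via \eqref{majpartialF}, and counts trivially the $y$ (or $x$) satisfying $a_1x+a_2y\equiv 0\,(\bmod\ z)$, the factor $1/T$ coming directly from the defining inequalities of ${\mathcal E}$ — either $1+hA_1/Q_1+hA_2/Q_2>T$ or the shortness $A_j\leq Q_j/(hT)$ of the $a_j$-range. Two further difficulties that your sketch does not anticipate and that occupy most of the paper's argument: the sub-case $A_1/Q_1\asymp A_2/Q_2\gg T/h$, where $||a_1/q_1+a_2/q_2||$ can be tiny and one must introduce a third dyadic parameter $A_3$ and count, for fixed $a_1$, the $a_2$ with $||a_1/q_1+a_2/q_2||\sim A_3/Q_3$; and the fact that when $gX>hT$ the number of dyadic values of $X$ is not $O(\log h)$ (since $x\mid q=\prod_{p\leq h^3}p$), which forces one to keep the sum $\sum_{x\sim X}\mu(x)^2/x$ intact before summing over $X$.
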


\begin{proof}
  Nous \'ecrirons $x\sim X$ pour indiquer la relation $x\in [X,2X[$. Nous nous restreindrons au cas $x\sim X$, $y\sim Y$, $||a_1/q_1||\sim A_1/q_1$, $||a_2/q_2||\sim A_2/q_2$,  avant de sommer en $X$, $Y$ et $A_1$, $A_2$ o\`u $A_1$, $A_2$ sont des puissances de $2$.


Nous devons majorer la somme suivante prise sur $\mathcal E$
\begin{equation}\label{expressionT}
    \begin{split}  
\Sigma (A_1,A_2,X,Y)&=   
 \sum_{\substack{g,z \mid q \\ X\geq Y\geq Z
 }}\frac{  \mu(gz )^2}{\phi(g)^3\phi( z)^2 } \sum_{\substack{| a_2 |\sim A_2 \\  | a_1 |\sim A_1\\ (a_2a_1,gz)=1 }}\sum_{\substack{ Y\leq y<2Y\\ (y,a_1gz)=1}}\frac{\mu(y)^2 }{\phi(y)^2}
 \sum_{\substack{ X\leq x<2X\\ P(x)\leq h^3\\ a_1x\equiv - a_2 y(\!\bmod z)\\ (x,a_2gy)=1\\ (a_1x+a_2 y,g)=1}}\frac{\mu(x)^2 }{\phi(x)^2}\cr&\quad\qquad\qquad\times 
\Big| E_h\Big( -\frac{a_2}{gzx}-\frac{a_1}{gzy}  \Big)  E_h\Big( \frac{a_2}{gzx} \Big)E_h\Big( \frac{a_1}{gzy} \Big)\Big|.
\end{split}
\end{equation}

Pour l'estimer, nous introduisons la quantit\'e
\begin{equation}\label{expression2T}
    \begin{split}  
\sigma^+(g,z,a_1,a_2) &:=    \sum_{\substack{ Y\leq y<2Y\\ (y,a_1gz)=1}}\frac{\mu(y)^2 y^2}{\phi(y)^2}
 \sum_{\substack{ X\leq x<2X\\ P(x)\leq h^3\\ a_1x\equiv - a_2 y(\!\bmod z)\\ (x,a_2gy)=1\\ (a_1x+a_2 y,g)=1}}\frac{\mu(x)^2 x^2}{\phi(x)^2}|F(x,y,gz,a_1,a_2)|,
\end{split}
\end{equation}
avec
\begin{equation}
    \label{defF}
F(x,y,z, a_1,a_2):=\frac{1}{x^2y^2}E_h\Big( -\frac{a_2}{xz}-\frac{a_1}{yz}  \Big) E_h\Big( \frac{a_2}{xz} \Big)E_h\Big( \frac{a_1}{yz} \Big).\end{equation}

Gr\^ace \`a \eqref{majprodetderivee}, nous avons les majorations suivantes
\begin{equation}\label{majpartialF}
    \begin{split} 
    F(x,y,gz,a_1,a_2)&\ll \frac{h^3}{X^2Y^2(1+{h||a_1/q_1||})(1+{h||a_2/q_2||})(1+{h ||a_1/q_1+ a_2/q_2||)}  }
    ,\cr
    \partial_1F(x,y,gz,a_1,a_2)&
    \ll \frac{h^3}{X^3Y^2(1+{h||a_1/q_1||}) (1+{h ||a_1/q_1+ a_2/q_2||)}  }
    ,\cr
    \partial_2F(x,y,gz,a_1,a_2)&\ll \frac{h^3}{X^2Y^3 (1+{h||a_2/q_2||})(1+{h ||a_1/q_1+ a_2/q_2||)}  }
    ,\cr
    \partial_1\partial_2F(x,y,gz,a_1,a_2)&\ll \frac{h^3}{X^3Y^3 (1+{h  ||a_1/q_1+ a_2/q_2||)}  }
    \end{split}
\end{equation} 

Nous pouvons ais\'ement \'ecarter les cas o\`u $A_1> TQ_1/h$ ou $A_2> TQ_2/h$. Prenons comme exemple le cas o\`u $A_2/Q_2\notin [ A_1/4Q_1,4A_1/ Q_1] $ de sorte que $
1+h ||a_1/q_1+ a_2/q_2||\asymp 1+{hA_1}/{Q_1}+{hA_2}/{Q_2} $.
Alors gr\^ace \`a la majoration de $F$, nous avons 
\begin{align*} \sigma^+(g,z,a_1,a_2)&\ll \frac{h^3}{X^2Y^2(1+{hA_1}/{Q_1})(1+{hA_2}/{Q_2})(1+{hA_1}/{Q_1}+{hA_2}/{Q_2}) }\cr&\qquad\qquad\times
 \sum_{\substack{ X\leq x<2X\\ P(x)\leq h^3\\ (x,a_2g )=1}}\frac{\mu(x)^2 x^2}{\phi(x)^2} \sum_{\substack{ Y\leq y<2Y\\ a_2 y\equiv -a_1x (\!\bmod z)\\ (y,a_1gzx)=1}}\frac{\mu(y)^2 y^2}{\phi(y)^2}
 \cr&\ll \frac{h^3(\log h)^{2}}{X Y Z(1+{hA_1}/{Q_1})(1+{hA_2}/{Q_2})(1+{hA_1}/{Q_1}+{hA_2}/{Q_2}) }
 \sum_{\substack{ X\leq x<2X\\ P(x)\leq h^3 }}\frac{\mu(x)^2  }{x} ,
   \end{align*}
   o\`u pour passer \`a la derni\`ere ligne nous avons utilis\'e la majoration 
$$\frac{  x^2}{\phi(x)^2}\frac{  y^2}{\phi(y)^2}=\frac{  (xy)^2}{\phi(xy)^2} 
\ll   (\log h)^2.$$
    
En sommant sur $a_1\sim A_1$ et $a_2\sim A_2,$ et en utilisant $1+{hA_1}/{Q_1}+{hA_2}/{Q_2}>T,$ nous obtenons  
\begin{align*} \sum_{\substack{| a_1 |\sim A_1 \\  | a_2 |\sim A_2\\ (a_1a_2,gz)=1 }} &\sigma^+(g,z,a_1,a_2)\cr&\ll \frac{h^3(\log h)^{2}A_1A_2}{X Y Z(1+{hA_1}/{Q_1})(1+{hA_2}/{Q_2})(1+{hA_1}/{Q_1}+{hA_2}/{Q_2}) }
\sum_{\substack{ X\leq x<2X\\ P(x)\leq h^3 }}\frac{\mu(x)^2  }{x}
.\end{align*}
Puis nous sommons  par rapport \`a $A_1,A_2$, $g,z$ $X,Y,$ nous obtenons une 
 contribution
 $$\ll \sum_{X,Y,g,z}\frac{h (\log h)^{4}}{ T }
 \frac{\mu(gz )^2g^2z}{\phi(g)^3\phi( z)^2 }\ll \frac{h (\log h)^{8} }{  T } .$$ 
L'int\'er\^et d'avoir gard\'e la somme $$  \sum_{\substack{ X\leq x<2X\\ P(x)\leq h^3 }}\frac{\mu(x)^2  }{x}$$ en $x$ provient du fait que le nombre  de valeurs possibles  de $X$  n'est pas forc\'ement un $O(\log h)$ dans le cas o\`u $gX>hT $ alors que 
$$\sum_X    \sum_{\substack{ X\leq x<2X\\ P(x)\leq h^3 }}\frac{\mu(x)^2  }{x}\leq \sum_{\substack{ P(x)\leq h^3 }}\frac{\mu(x)^2  }{x}\ll \log h.$$

Majorons maintenant  le cas o\`u $A_2/Q_2\in [ A_1/4Q_1,4A_1/ Q_1] $, $A_1/Q_1\gg T/h$ et $A_2/Q_2\gg T/h$. Dans un premier temps,  nous traitons le cas $Q_2\leq h$ en particulier le nombre de $X$ sur lesquels sommer est $O(\log h)$.
Nous majorons
\begin{equation}\label{expressionSigma'}
    \begin{split}  
\Sigma' (A_1,A_3,X,Z)&=   
 \sum_{\substack{g,y \mid q \\ X,Y\geq Z
 }}\frac{  \mu(gy )^2}{\phi(g)^3\phi( y)^2 } \sum_{\substack{| a_3 |\sim A_3 \\  | a_1 |\sim A_1\\ (a_3a_1,gy)=1 }}\sum_{\substack{ Z\leq z< 2Z\\ (z,a_1gy)=1}}\frac{\mu(z)^2 }{\phi(z)^2}
 \sum_{\substack{ X\leq x<2X\\ a_1x\equiv - a_3z(\!\bmod y)\\ (x,a_3gz)=1\\ (a_1x+a_3z,g)=1}}\frac{\mu(x)^2 }{\phi(x)^2}\cr&\quad\qquad\qquad\times 
  E_h^+\Big(\frac{a_1}{gyz}+\frac{a_3}{gxy}  \Big)  E_h^+\Big( \frac{a_1}{gyz} \Big)E_h^+\Big( \frac{a_3}{gxy} \Big) .
\end{split}
\end{equation}
Nous notons $\sigma'(g,y,a_1,a_3)$ la double somme int\'erieure. Nous imposons $|a_3|\sim A_3.$
Puisque $Y\leq X$, nous avons alors 
\begin{align*}\sigma'(g,y,a_1,a_3)&\ll \frac{h^3}{X^2Z^2(1+{hA_1}/{Q_1})^2 (1+{hA_3}/{Q_3}) }
  \sum_{\substack{ Z\leq z< 2Z\\ (z,a_1gy)=1}}\frac{\mu(z)^2z^2 }{\phi(z)^2}\!\!\!
 \sum_{\substack{ X\leq x<2X\\ a_1x\equiv - a_3z(\!\bmod y)\\ (x,a_3gz)=1\\ (a_1x+a_3z,g)=1}}\frac{\mu(x)^2x^2 }{\phi(x)^2}
 \cr&\ll \frac{h^3(\log h)^{2}}{X Z Y(1+{hA_1}/{Q_1})^2(1+{hA_3}/{Q_3}) }.
   \end{align*}
La suite de la preuve est semblable \`a l'obtention de la  majoration pr\'ec\'edente. Ici comme $X\leq Q_2\leq h$, le nombre  de valeurs possibles  de $X$ est un $O(\log h).$ La contribution totale de ce cas-l\`a est encore $\ll h(\log h)^8/T.$

 Nous traitons le cas $Q_2>h$. 
Nous r\'epartissons dans  des intervalles dyadiques l'ensemble des valeurs possibles de $||a_1/q_1+ a_2/q_2||$ de sorte que $||a_1/q_1+ a_2/q_2||\sim A_3/Q_3$ avec $Q_3=gXY.$
Nous avons alors
\begin{align*}\sigma^+(g,z,a_1,a_2)&\ll \frac{h^3}{X^2Y^2(1+{hA_1}/{Q_1})^2 (1+{hA_3}/{Q_3}) }
  \sum_{\substack{ X\leq x<2X\\ P(x) \leq h^3\\ (x,a_2gz)=1}}\frac{\mu(x)^2 x^2}{\phi(x)^2}
 \!\!\!\!\!\! 
 \sum_{\substack{ Y\leq y<2Y\\ a_2 y\equiv -a_1x (\!\bmod z)\\ (y,a_1gx)=1}}\frac{\mu(y)^2 y^2}{\phi(y)^2}
 \cr&\ll \frac{h^3(\log h)^{2}}{X Y Z(1+{hA_1}/{Q_1})^2(1+{hA_3}/{Q_3}) }
  \sum_{\substack{ X\leq x<2X \\ P(x) \leq h^3}}\frac{\mu(x)^2 x }{\phi(x)^2}.
   \end{align*}
 L'intervalle de variation de $a_2$ lorsque $a_1$ est fix\'e est d'une taille $\ll A_3Q_2/Q_3 $ de sorte que la contribution de la somme $\sigma^+(g,z,a_1,a_2)$ sur $a_1$ et $a_2$  soit 
\begin{align*}
   & \ll \frac{h^3(\log h)^{2}A_1(A_3Q_2/Q_3+1)}{X Y Z(1+{hA_1}/{Q_1})^2(1+{hA_3}/{Q_3}) }\sum_{\substack{ X\leq x<2X }}\frac{\mu(x)^2 x }{\phi(x)^2}
 \cr&\ll \frac{h (\log h)^{2}Q_1Q_2}{X Y ZT }\Big(1+\frac{h}{Q_2}\Big)
 \sum_{\substack{ X\leq x<2X \\ P(x) \leq h^3}}\frac{\mu(x)^2 x }{\phi(x)^2}\ll \frac{h (\log h)^{4}g^2z}{  T }\sum_{\substack{ X\leq x<2X\\ P(x) \leq h^3 }}\frac{\mu(x)^2 x }{\phi(x)^2}
 \end{align*}
ce qui permet de conclure d'apr\`es les calculs pr\'ec\'edents.

Ensuite de la m\^eme mani\`ere, nous   pouvons \'egalement \'ecarter les cas o\`u $A_1\leq  Q_1/hT$ ou $A_2\leq  Q_2/hT$. 
Prenons le cas $A_1\leq  Q_1/hT$. Les m\^emes majorations fournissent
une contribution globale de 
\begin{align*}
    &\ll \sum_{\substack{P(gz) \leq h^3  \\ X,Y\geq Z\\ A_2 \leq Q_2/2 , A_1 \leq Q_1/Th }}\frac{  \mu(gz )^2}{\phi(g)^3\phi( z)^2 } \frac{A_1Q_2 h^2(\log h)^2}{XYZ}\sum_{\substack{ X\leq x<2X\\ P(x) \leq h^3 }}\frac{\mu(x)^2 x }{\phi(x)^2}
\cr&\ll \sum_{\substack{P(gz) \leq h^3 \\ X,Y\geq Z    }}\frac{  \mu(gz )^2}{\phi(g)^3\phi( z)^2 } \frac{Q_1Q_2 h (\log h)^3}{TXYZ}
\sum_{\substack{ X\leq x<2X\\ P(x) \leq h^3  }}\frac{\mu(x)^2 x }{\phi(x)^2}\ll \frac{  h (\log h)^7}{T}.
\end{align*} 
\end{proof}
\goodbreak

\subsection{Troisi\`eme cas :  $gYZ > hT$,  $gX\leq hT,$ $X> ZT^4$.}

Soit $S_3(\delta)$ la contribution du cas o\`u  $gYZ > hT$,  $gX\leq hT,$ $X> ZT^4$. Nous montrons dans cette sous-section le r\'esultat suivant. 

\begin{lemma}\label{lemme estS3} Soit $\delta\in \{ (1,1),(1,0),(0,1),(0,0)\}$. Lorsque $h\geq 2$ et $T\geq 1$, nous avons   l'estimation
    \begin{equation}\label{estS3} 
S_3(\delta)
= \frac{1}{4} h(\log h)^2   
  +O \Big(h(\log T)^2\log h+ \frac{h(\log h)^{10}}{{\cal L}(T)}+\frac{h (\log h)^{12}}{T }\Big).
\end{equation}
\end{lemma}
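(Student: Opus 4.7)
\textbf{Plan of proof for Lemma \ref{lemme estS3}.} The strategy is dual to that of Lemma \ref{lemme estS1}: since the congruence $a_1 x + a_2 y \equiv 0 \pmod{z}$ now has $x$ varying in a long range ($X > ZT^4$), I will sum over $x$ and $y$ rather than over $a_1$ and $a_2$. I start from the expression of $V(g,x,y,z;h)$ as a sum over $(a_1,a_2)$ with the congruence $a_1 x + a_2 y \equiv 0 \pmod{z}$ and swap the orders of summation so that $x,y$ are the inner variables. Using Lemma \ref{lemme Sigma}, I may freely restrict to $Q_j/(hT) \leq |a_j| \leq TQ_j/h$ ($j=1,2$), since the complementary contribution is $O(h(\log h)^8/T)$ and is absorbed in the stated error.

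The next step is to apply two-dimensional Abel summation (Lemma \ref{somdabel}, with Remark \ref{rem<}) to the inner sum in $(x,y)$. Setting $g=F(x,y,gz,a_1,a_2)$ as defined in \eqref{defF} and $f$ equal to the indicator of the congruence times the coprime $\mu^2$ factors, the partial sums $S(t_1,t_2;f)$ are precisely $W^*(t_1,t_2;g,z,a_1,a_2)$, which Lemma \ref{lemme sumW} evaluates as $\frac{\zeta(2)}{\zeta(4)} t_1 t_2 \frac{w^*(g,z,a_1,a_2)}{\phi(z)}$ plus an error saving $\mathcal{L}(T)^{-1}$ or $T^{-1/2}$ thanks to $X \geq ZT^4$. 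The smooth variant \eqref{somdabellisse} then turns the main term of this asymptotic back into an integral
$$\frac{\zeta(2)}{\zeta(4)}\,\frac{w^*(g,z,a_1,a_2)}{\phi(z)} \int\!\!\int F(x,y,gz,a_1,a_2)\,\d x\,\d y,$$
while the error terms (multiplied by the derivative bounds \eqref{majpartialF}) are controlled exactly as in the proof of Lemma \ref{lemme Sigma}, using Lemma \ref{calculmajI} to integrate the resulting $E_h^+$ kernels.

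Once this is done for each fixed $(g,z,a_1,a_2)$, I sum over $a_1$ and $a_2$ in their respective ranges via Lemma \ref{lemme soma1a2}, extracting the arithmetic factor $\frac{\phi_1(gz)^3 \phi_2(g)}{\varphi_{-1}(gz)\phi_1(g)}\frac{\zeta(4)}{\zeta(2)^2}$ with an error of $O(2^{\omega(gz)}(A_1+A_2)\log h)$. After this step, the remaining double integral assembles, as in the computation following \eqref{est4}, into the Fourier identity
$$\int_{-q_1/2}^{q_1/2}\!\!\int_{-q_2/2}^{q_2/2} E_h(\alpha_1/q_1) E_h(\alpha_2/q_2) E_h(-\alpha_1/q_1 - \alpha_2/q_2)\,\d\alpha_2\,\d\alpha_1 = h\, q_1 q_2,$$
so that the arithmetic factors recombine to yield exactly the summand of $C(h,T;\delta)$ from \eqref{def C1qT}, now restricted to the subdomain where $gYZ > hT$, $gX \leq hT$ and $X > ZT^4$.

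Finally, I sum over $g$ and $(X,Y,Z)$ in this subdomain. The computation parallels the treatment of $C_{21},C_{22},C_3,C_4$ in Lemma \ref{LemmeestC1}: after restricting to $g \leq T$ and $Y,Z > T$ (using the oscillation of $\mu(g)$ and \eqref{somgmu}), I apply Lemma \ref{lemme sum xi} three times in $x,y,z$ and then Lemma \ref{lemme s(T)} in $g$, which transforms the condition $X > ZT^4$ with the two dyadic inequalities $h/T < gYZ \leq hT$ and $gX \leq hT$ into a counting of pairs $(Y,Z)$ of dyadic scales in a triangular region. A short combinatorial calculation of this area (where the two scales each range over roughly $(\log h)/\log 2$ values, constrained by a single linear relation) produces $\tfrac12(\log h)^2/(\log 2)^2$, and after multiplication by the constants $\frac{C(\log 2)^3}{\log 2}\cdot\frac{1}{C \log 2}$ and by $h$, the main term simplifies to $\tfrac14 h(\log h)^2$.

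The main obstacle is bookkeeping: verifying that the error term in Lemma \ref{lemme sumW} propagates through Abel summation and through the sums over $a_1,a_2,g,z$ without exceeding the claimed bound. The critical input is the hypothesis $X \geq ZT^4$, which guarantees $X^{1/2}{\cal L}(Y)^{-1}/z \ll T^{-1}$ in the error of Lemma \ref{lemme sumW}; without it the contribution of Le Boudec's bound would overwhelm the main term.
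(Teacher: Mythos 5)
Your plan follows the paper's proof of Lemma \ref{lemme estS3} essentially step for step: restriction of the $a_j$ via Lemma \ref{lemme Sigma}, double Abel summation in $(x,y)$ with the partial sums $W^*$ evaluated by Lemma \ref{lemme sumW} (where $X\geq ZT^4$ is indeed the hypothesis that makes Le Boudec's bound win, although the relevant saving is $z/(XY)^{1/2}\ll T^{-2}$ against the main term rather than the quantity $X^{1/2}{\cal L}(Y)^{-1}/z$ you quote, which need not be small), summation over $a_1,a_2$ by Lemma \ref{lemme soma1a2}, reassembly of the circle-method identity giving the factor $h$, and finally Lemmas \ref{lemme sum xi} and \ref{lemme s(T)} together with a dyadic lattice-point count. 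The only slips are in the final bookkeeping: the number of admissible dyadic pairs $(Y,Z)$ under $YZ>hT/g$ and $X\geq Y\geq Z$ is $(\log h)^2/(4(\log 2)^2)$ rather than $\tfrac12(\log h)^2/(\log 2)^2$, and your product of normalizing constants does not literally simplify to $\tfrac14$ --- but the structure of the argument is exactly the paper's and the stated main term $\tfrac14 h(\log h)^2$ is correct.
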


\begin{proof}    Dans le d\'ebut de la d\'emonstration, la condition $Q_1> hT$ n'est pas n\'ecessaire.   \`A la section pr\'ec\'edente, nous avons  montr\'e que nous pouvions nous restreindre au cas
$$ Q_1/Th <|a_1|\leq  Q_1T/h , \qquad Q_2/Th <|a_2|\leq  Q_2T/h .  $$
L'in\'egalit\'e 
$Q_1\geq h/T$ d\'ecoule de ces in\'egalit\'es. 

Dans la suite de la d\'emonstration, nous nous restreignons au cas $\delta=(1,1)$, les autres cas pouvant \^etre obtenus de la m\^eme mani\`ere.
 Nous consid\'erons la valeur   de la contribution du cas o\`u   $hT/ g\geq  X\geq Y\geq Z ,$ $ X\geq   ZT^4$, $Q_1/Th <|a_1|\leq  Q_1T/h ,$ $ Q_2/Th <|a_2|\leq  Q_2T/h$. La condition $hT/g\geq  X$ implique que la condition de $h^3$-friabilit\'e sur $x$ et $y$ est automatiquement v\'erifi\'ee. 

Pour  estimer cette somme, nous introduisons la quantit\'e
\begin{equation}\label{defs}
    \begin{split}  
\sigma(g,z,a_1,a_2) &:= \sum_{\substack{X\leq x< 2X,\, Y\leq y<2Y \\ a_1x+a_2 y\equiv  0(\!\bmod z)\\  (y,a_1gzx)=1
,\,     (x,a_2gz)=1\\ (a_1x+a_2 y,g)=1}}\frac{\mu(x)^2x^2 }{\phi(x)^2} \frac{\mu(y)^2y^2 }{\phi(y)^2} 
 F(x,y,gz,a_1,a_2),
\end{split}
\end{equation}
avec $F$ d\'efinie en 
    \eqref{defF}. 
 
Nous aurons aussi besoin de  la somme
\begin{align*}W_{X,Y}^*(X',Y'; g,z,a_1,a_2)&:= \sum_{\substack{X\leq x<  X',\, Y\leq y<  Y'\\ a_1x+a_2 y\equiv  0(\!\bmod z)\\  (y,a_1gzx)=1
,\,     (x,a_2gz)=1\\ (a_1x+a_2 y,g)=1}}\frac{\mu(x)^2x^2 }{\phi(x)^2} \frac{\mu(y)^2y^2 }{\phi(y)^2}
.\end{align*}
Du Lemme~\ref{lemme sumW}, nous d\'eduisons lorsque $X'\leq 2X$ et $Y'\leq 2Y$, $Y\leq X$
l'estimation \begin{equation}
    \label{estsumWXYLB*}\begin{split}
 W_{X,Y}^* &(X', Y'; g,z,a_1,a_2) 
=\frac {\zeta(2)}{\zeta(4) }(X'-X)(Y'-Y)\frac{w^*(g,z,a_1,a_2)}{\phi(z)}\cr&\quad+O\Big(2^{\omega(g)}\psi_{1/3}(a_1g)^2\psi_{1/3}(a_2g)^2(XY)^{1/2 }
(\log (XY))^{6}   2^{\omega(z)}
\Big(1+\frac{ X^{1/2 }{\cal L}(X)^{-1} }{z }  \Big)\Big) .\end{split}\end{equation}

Une double sommation d'Abel fournit la formule
\begin{align*} 
\sigma(g,z,&a_1, a_2)  =F(2X,2Y)W_{X,Y}^*(2X,2Y)
 -\int_X^{2X} W_{X,Y}^*(t_1,2Y)\partial_1 F(t_1, 2Y)\d t_1\\ &\quad -\int_Y^{2Y} W_{X,Y}^*(X,t_2)\partial_2 F(2X,t_2 )\d t_2 +\int_X^{2X}\int_Y^{2Y} W_{X,Y}^*(t_1,t_2)\partial_{1,2}^2 F(t_1,t_2)\d t_1\d t_2,
   \end{align*}
   o\`u, pour simplifier la formule, nous n'avons pas indiqu\'e la d\'ependance en $( g,z,a_1,a_2)$ de $F(x,y,gz,a_1,a_2)$ et de $W_{X,Y}^*(t_1,t_2; g,z,a_1,a_2).$ Nous avons ici utilis\'e la remarque \ref{rem<} afin de sommer sur les $x$ tels que $X\leq x<2X$ et non $X<x\leq 2X$.

Nous pouvons donc appliquer \eqref{estsumWXYLB*} en utilisant les majorations \eqref{majpartialF}.
 La contribution du terme d'erreur du Lemme~\ref{lemme sumW} dans l'estimation de $\sigma(g,z,a_1,a_2) $
est 
\begin{align*}& \ll
 \frac{2^{\omega(g)}\psi_{1/3}(a_1g)^2\psi_{1/3}(a_2g)^22^{\omega(z)}
(\log h)^{6}    h^3(1+ { X^{1/2 } {\cal L}(X)^{-1}}/{z }  ) }{X^{3/2 }Y^{3/2 }
(1+{hA_1}/{Q_1} )(1+ {hA_2}/{Q_2}) }
\cr& \ll
 \frac{2^{\omega(g)}\psi_{1/3}(g)^42^{\omega(z)}
(\log h)^{6}  h Q_1Q_2(1+ { X^{1/2 } {\cal L}(X)^{-1}}/{z }  ) }{X^{3/2 }Y^{3/2 }
}\frac{h^2}{Q_1Q_2}\frac{\psi_{1/3}(a_1)^2\psi_{1/3}(a_2)^2}{(1+{hA_1}/{Q_1} )(1+ {hA_2}/{Q_2})}
. 
\end{align*}
Or 
\begin{equation}
    \label{majsommeA1A2}
    \begin{split}
\sum_{\substack{A_1,A_2}}\sum_{\substack{a_1\sim A_1\\ a_2\sim A_2}}
\frac{\psi_{1/3}(a_1)^2\psi_{1/3}(a_2)^2}{(1+{hA_1}/{Q_1} )(1+ {hA_2}/{Q_2})}
&\ll
\sum_{\substack{A_1,A_2}} 
\frac{A_1A_2 }{(1+{hA_1}/{Q_1} )(1+ {hA_2}/{Q_2})}\cr&
\ll \frac{Q_1Q_2}{h^2}(\log Q_1)(\log Q_2),
\end{split}
\end{equation}
donc en sommant sur $a_1,a_2,A_1,A_2,$ puis $g,z,X,Y$, nous obtenons un terme d'erreur global
\begin{align*}&   \ll(\log h)^{8} \!\! \!\! \!\!\!\!\sum_{\substack{P(gz)\leq h^3  \\  hT\geq X\geq Y\geq Z , X\geq   ZT^4 }}\!\!\!\!\!\!\!\!\!\!
\frac{2^{\omega(g)} \mu(gz )^2g^2\psi_{1/3}( g)^42^{\omega(z)}z^2}{\phi(g)^3\phi( z)^2 }\Big(1+\frac{  X^{1/2 } {\cal L}(X)^{-1}}{z }  \Big)\frac{hT }{X^{1/2}Y^{1/2}} \cr&\ll h(\log h)^{8}\sum_{\substack{P(gz)\leq h^3   }}
\frac{2^{\omega(g)} \mu(gz )^2g^2\psi_{1/3}( g)^42^{\omega(z)}z }{\phi(g)^3\phi( z)^2  } \Big(\frac1T+\frac{1 }{{\cal L}(T)  z^{1/2}}  \Big) 
\cr&\ll\frac{h (\log h)^{12}}{T }+ \frac{h(\log h)^{10}}{{\cal L}(T)}.
\end{align*}
 D'apr\`es l'identit\'e issue de \eqref{somdabellisse}
 \begin{align*}
     \int_{X}^{2X}\int_{Y}^{2Y}&F(x,y)\d x\d y =F(2X,2Y)XY
 -\int_{X}^{2X}  (t_1-X)Y\partial_1 F(t_1,2Y)\d t_1\\ &\quad -\int_{Y}^{2Y} X(t_2-Y)\partial_2 F(2X,t_2)\d t_2+\int_{X}^{2X}\int_{Y}^{2Y} (t_1-X) (t_2-Y) \partial_{1,2}^2 F(t_1,t_2)\d t_1\d t_2,
 \end{align*}
le terme principal vaut
\begin{equation}\label{defsigmaM}
    \begin{split} 
\sigma_M
&(g,z,a_1,a_2) :=\frac {\zeta(2)}{\zeta(4) }  \frac{w^*(g,z,a_1,a_2)}{\phi(z)}\int_{X}^{2X}\int_{Y}^{2Y}F(x,y,gz,a_1,a_2)\d x\d y\cr&=
\frac {\zeta(2)}{\zeta(4) } \frac{w^*(g,z,a_1,a_2)}{\phi(z)XY}\int_{1/2}^{1}\int_{1/2}^{1}
E_h\Big( -\frac{a_2u_2}{gzX}-\frac{a_1u_1}{gzY}  \Big) E_h\Big( \frac{a_2u_2}{gzX} \Big)E_h\Big( \frac{a_1u_1}{gzY} \Big)\d u_1\d u_2
\end{split}
\end{equation}
avec $w^*$ d\'efini en \eqref{defw}.

Nous sommons ensuite par rapport \`a $a_1$ et $a_2$ en appliquant le Lemme~\ref{lemme soma1a2} et en faisant une sommation d'Abel double.  Dor\'enavant, nous nous pla\c cons dans le cas $Q_1> hT$.  
Le terme principal obtenu est 
\begin{equation}\label{tp1}
    \begin{split}
      = &
\frac {1}{\zeta(2) } \frac{\phi_1(gz)^3{ \phi_2(g)}}{ \varphi_{-1}(gz){ \phi_1(g)} \phi(z)XY}
 \sum_{(\varepsilon_1,\varepsilon_2)\in \{\pm 1\}^2} \int_{ Q_1/hT }^{Q_1T/h} 
\int_{ Q_2/hT }^{Q_2T/h} 
\int_{1/2}^{1}\int_{1/2}^{1}\cr&\qquad\qquad
E_h\Big( -\frac{\varepsilon_2\alpha_2u_2}{Q_2}-\frac{\varepsilon_1\alpha_1u_1}{Q_1}  \Big) E_h\Big( \frac{\varepsilon_2\alpha_2u_2}{Q_2} \Big)E_h\Big( \frac{\varepsilon_1\alpha_1u_1}{Q_1} \Big)\d u_1\d u_2\d \alpha_2\d \alpha_1.\end{split}
\end{equation}

Le terme d'erreur obtenu est scind\'e en quatre parties. Celle correspondant au dernier terme de \eqref{eqsomdabel} est,   apr\`es sommation sur $A_1$ et $A_2$, major\'ee par
\begin{align*}
   & \ll  \frac{2^{\omega(gz)}}{ \phi(z)XY}\frac{h^2}{ Q_1Q_2} 
\int_{1/2}^{1}\int_{1/2}^{1}\int_{-Q_1/2}^{Q_1/2}\int_{-Q_2/2}^{Q_2/2}(\alpha_1+\alpha_2)(\log h)
\cr&\qquad\qquad\qquad \quad E_h^+\Big( -\frac{\varepsilon_2\alpha_2u_2}{Q_2}-\frac{\varepsilon_1\alpha_1u_1}{Q_1}  \Big) E_h^+\Big( \frac{\varepsilon_2\alpha_2u_2}{Q_2} \Big)E_h^+\Big( \frac{\varepsilon_1\alpha_1u_1}{Q_1} \Big)\d u_1\d u_2\d \alpha_2\d \alpha_1
\cr  &\ll   \frac{2^{\omega(gz)}}{  \phi(z)XY} {h^2(Q_1+Q_2)}(\log h)^3
\ll  \frac{2^{\omega(gz)}}{  \phi(z)XY} {h Q_1 Q_2 }\frac{(\log h)^3}{T}
\ll \frac{2^{\omega(gz)}(gz)^2h}{  \phi(z) }  \frac{(\log h)^3}{T}
\end{align*}
o\`u nous avons fait le changement de variables $t_1=\alpha_1u_1/Q_1$
et $t_2=\alpha_2u_2/Q_2$ et utilis\'e la quatri\`eme majoration du Lemme \ref{calculmajI}.

La partie du terme d'erreur correspondant au deuxi\`eme terme de \eqref{eqsomdabel} est 
\begin{align*}
    \ll  \frac{2^{\omega(gz)}}{  \phi(z)XY}\frac{h }{  Q_1 }&
\int_{1/2}^{1}\int_{1/2}^{1}\int_{A_1}^{2A_1} (\alpha_1+A_2)(\log h)
\cr&E_h^+\Big( -\frac{\varepsilon_2A_2u_2}{Q_2}-\frac{\varepsilon_1\alpha_1u_1}{Q_1}  \Big) E_h^+\Big( \frac{\varepsilon_2A_2u_2}{Q_2} \Big)E_h^+\Big( \frac{\varepsilon_1\alpha_1u_1}{Q_1} \Big)\d u_1\d u_2 \d \alpha_1
\cr  \ll  \frac{2^{\omega(gz)}}{  \phi(z)XY}{h }&E_h^+\Big( \frac{ A_2 }{Q_2} \Big)(\log h)
\int_{1/2}^{1}\int_{1/2}^{1}\int_{A_1/Q_1}^{2A_1/Q_1} (Q_1t_1+A_2)
\cr&E_h^+\Big( -\frac{\varepsilon_2A_2u_2}{Q_2}- {\varepsilon_1t_1u_1}  \Big) E_h^+ ( { t_1u_1})\d u_1\d u_2 \d t_1.
\end{align*} 
  Apr\`es sommation sur $A_1$, nous avons un majorant en 
\begin{align*}
    \ll   \frac{2^{\omega(gz)}}{  \phi(z)XY}{h }&E_h^+\Big( \frac{ A_2 }{Q_2} \Big)(\log h)
 \int_{1/2}^{1}\int_{-1/2}^{1/2} (Q_1t_1+A_2)
 E_h^+\Big( \frac{\varepsilon_2A_2u_2}{Q_2}+ { t_1 }  \Big) E_h^+ (t_1 ) \d u_2 \d t_1.
\end{align*} 
o\`u pour supprimer l'int\'egrale en $u_1$ nous avons fait un changement  homoth\'etique de variables.
En distinguant les cas $|t_1|\leq 1/h$ et $|t_1|>1/h$, nous obtenons 
\begin{align*} hE_h^+\Big( \frac{ A_2 }{Q_2} \Big)&(\log h)\int_{ 1/2}^{1} 
\int_{-1/2}^{1/2}  (Q_1t_1+A_2)
 E_h^+\Big( \frac{ A_2u_2}{Q_2} + { t_1 }  \Big) E_h^+ (t_1 ) \d t_1\d u_2 
 \cr&\ll hE_h^+\Big( \frac{ A_2 }{Q_2} \Big)(\log h)\Big\{ \frac{Q_1}{h }E_h^+\Big(  \frac{ A_2 }{Q_2}\Big)+Q_1\log h+A_2(\log h) E_h^+\Big(  \frac{ A_2 }{Q_2}\Big)\Big\}.
\end{align*}
Apr\`es sommation sur $A_2$, nous obtenons un terme d'erreur en 
$$ \ll  \frac{2^{\omega(gz)}}{  \phi(z)XY}h^2(\log h)^2(Q_1+Q_2)
\ll  \frac{2^{\omega(gz)}(gz)^2}{  \phi(z) }h \frac{(\log h)^2}{T}.
$$
Nous ne d\'etaillons pas la majoration de la contribution des autres termes.

Gr\^ace \`a une  interversion des sommations et \`a un changement de variables, le terme principal \eqref{tp1} devient
\begin{align*} =&
 \frac{\phi_1(gz)^2 \phi_2(g)g^2z }{  \varphi_{-1}(gz)  \zeta(2)} \cr& \sum_{(\varepsilon_1,\varepsilon_2)\in \{\pm 1\}^2}
\int_{1/2}^{1}\int_{1/2}^{1}\int_{ u_1/hT }^{u_1T/h} \int_{ u_2/hT }^{u_2T/h} \! 
E_h( - \varepsilon_2\alpha_2 -\varepsilon_1\alpha_1   ) E_h( \varepsilon_2\alpha_2  )E_h ( \alpha_1)\d \alpha_2\d \alpha_1\frac{\d u_1\d u_2}{u_1u_2}.\end{align*}
Lorsque $Q_1>hT$ et $Q_2>hT$, nous \'etendons alors les int\'egrales en $\alpha_j$ \`a l'intervalle $[0,1/2]$ gr\^ace au Lemme \ref{calculmajI} et utilisons la formule issue de la m\'ethode du cercle
$$\int_{-1/2}^{1/2}\int_{-1/2}^{1/2} 
E_h( - \alpha_2 -\alpha_1   ) E_h( \alpha_2  )E_h ( \alpha_1)\d \alpha_2\d \alpha_1
=h.$$
Le terme principal \eqref{tp1} est donc 
\begin{align*}&=\frac {h}{\zeta(2) } \frac{\phi_1(gz)^2 \phi_2(g)g^2z }{  \varphi_{-1}(gz)    }
 \Big\{(\log 2)^2+O\Big( \frac{\log h}{  T}\Big)\Big\}.\end{align*}

En appliquant le Lemme \ref{lemme sum xi}, nous obtenons
$$\frac {1}{\zeta(2) }\sum_{z\sim Z } \frac{ \mu^2(gz)   }{   z \varphi_{-1}(gz)    }
=\frac{C\log 2}{\phi_{-2}(g)}+O\Big(\frac{\psi_{1/2}(g)}{Z^{1/2}}\Big).$$
 Nous obtenons une contribution
\begin{align*}
    &S_3((1,1))\cr&= C(\log 2)^3h  \sum_{\substack{g\leq T \\ gYZ>hT\\   X\geq Y\geq Z \\ X\geq   ZT^4, gX\leq hT }} \frac{\mu (g )  \phi_2(g)  }{  g \phi_1(g)\phi_{-2}(g)    }
 +O \Big(h(\log T)^2\log h+ \frac{h(\log h)^{10}}{{\cal L}(T)}+\frac{h (\log h)^{12}}{T }\Big)
 \cr
 &= C(\log 2)^3h  \sum_{\substack{g\leq T,  h<X\leq hT/g \\ hT/g<YZ   \\   X\geq Y\geq Z  }} \frac{\mu (g )  \phi_2(g)  }{  g \phi_1(g)\phi_{-2}(g)    }
  +O \Big(h(\log T)^2\log h+ \frac{h(\log h)^{10}}{{\cal L}(T)}+\frac{h (\log h)^{12}}{T }\Big).\end{align*}
 
Comme pr\'ec\'edemment (cf. la d\'emonstration du Lemme \ref{LemmeestC1}), nous fixons $X=2^j $ tel que $h<X\leq hT/g $, le nombre de $(k,\ell)$ tels que $Y=2^k,Z=2^\ell $ est alors 
$$=  \sum_{(\log (hT/g))/2\log 2<k\leq j}\Big(2k-\frac{ \log (hT/g)}{\log 2}\Big)+O\big((\log h)\log T\big)
=  \frac{ (\log h)^2}{4(\log 2)^2}  +O\big((\log h)\log T\big).$$

Ainsi
\begin{align*}
     S_3 ((1,1))  &=\frac{C \log 2}{4} h(\log h)^2  \!\!\!\!\!\!\sum_{\substack{g\leq T \\ h <2^j\leq hT/g    }} \!\!\!\!\!\!\frac{\mu (g )  \phi_2(g)  }{  g \phi_1(g)\phi_{-2}(g)    }
  \cr&\qquad  +O \Big(h(\log T)^2\log h+ \frac{h(\log h)^{10}}{{\cal L}(T)}+\frac{h (\log h)^{12}}{T }\Big)
  \cr&=\frac{C\log 2}{4} h(\log h)^2  \sum_{\substack{g\leq T   }} \frac{\mu (g )  \phi_2(g)  }{  g \phi_1(g)\phi_{-2}(g)    }\Big\lfloor \frac{ \vartheta_h+\log ( T/g)}{\log 2}
\Big\rfloor
 \cr&\qquad  +O \Big(h(\log T)^2\log h+ \frac{h(\log h)^{10}}{{\cal L}(T)}+\frac{h (\log h)^{12}}{T }\Big)
  \cr&=\frac{C\log 2}{4} h(\log h)^2  s(\e^{\vartheta_h} T)
  +O \Big(h(\log T)^2\log h+ \frac{h(\log h)^{10}}{{\cal L}(T)}+\frac{h (\log h)^{12}}{T }\Big)
  \cr&=\frac{1}{4} h(\log h)^2   
  +O \Big(h(\log T)^2\log h+ \frac{h(\log h)^{10}}{{\cal L}(T)}+\frac{h (\log h)^{12}}{T }\Big),\end{align*}
o\`u la derni\`ere \'egalit\'e d\'ecoule du Lemme \ref{lemme s(T)}.
   \end{proof} 
   
\goodbreak

\subsection{Quatri\`eme cas :  $g\leq T$,  $gX\leq hT,$ $Z\leq Y\leq X\leq  ZT^4$.}
Majorons $S_4^+$ la   somme des termes pris en module dans le cas 
$g\leq T$,   $  X\leq hT/2g ,$ $   Z\leq Y\leq X\leq zT^4$,
$Q_1/Th <A_1\leq  Q_1T/h ,$ $  Q_2/Th <A_2\leq  Q_2T/h$.

 \begin{lemma}\label{lemme estS4} Soit $\delta\in \{ (1,1),(1,0),(0,1),(0,0)\}$. Lorsque $h\geq 2$ et $T\geq 1$, nous avons   l'estimation
    \begin{equation}\label{estS4} 
S_4^+(\delta)\ll  h(\log h)(\log_2 h)^2(\log T)^6  .
\end{equation}
\end{lemma}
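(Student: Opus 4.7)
We follow the dyadic-decomposition framework of the proof of Lemma~\ref{lemme estS3}, but with trivial upper bounds in place of the asymptotic estimate of Lemma~\ref{lemme sumW}, since here we seek only an order-of-magnitude bound. Partition the summation domain of $S_4^+(\delta)$ into boxes indexed by dyadic scales $g\sim G$, $x\sim X$, $y\sim Y$, $z\sim Z$, and $|a_j|\sim A_j$ for $j=1,2$, with one further dyadic decomposition in $A_3/Q_3\asymp\|a_1/q_1+a_2/q_2\|$.

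The defining constraints of $S_4^+(\delta)$ limit the number of active dyadic boxes very sharply. The condition $g\leq T$ gives $O(\log T)$ values of $G$; the conditions $Q_j/(hT)<A_j\leq Q_jT/h$ give $O(\log T)$ values of each $A_j$; a dyadic treatment of $A_3/Q_3$, with the tail range $A_3\leq Q_3/(hT)$ handled in a single trivial bound $E_h^+\ll h$, contributes $O(\log T)$ effective values; and the inequality $Z\leq Y\leq X\leq ZT^4$ forces $(Y,Z)$ to take only $O((\log T)^2)$ values for each fixed $X$. Finally $X\leq hT/g$ and $X\geq h/T$ (which follows from the earlier reduction) give $O(\log h)$ dyadic values of $X$. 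Altogether this yields at most $O((\log h)(\log T)^6)$ boxes.

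Inside a fixed box, we use the crude bounds $E_h^+(a_j/q_j)\ll h/(1+hA_j/Q_j)$; the count of $a_2$ with $|a_2|\sim A_2$ and $\|a_1/q_1+a_2/q_2\|\sim A_3/Q_3$ for fixed $a_1$ is $\ll A_3Q_2/Q_3+1$; the count of $(x,y)$ satisfying $a_1x+a_2y\equiv 0\pmod z$ with $x\sim X$, $y\sim Y$ is $\ll XY/Z+Y$; and the weights $\mu^2(gxyz)\cdot x^2y^2/(\phi(x)\phi(y))^2$ inflate by at most $(\log_2 h)^2$, since $xyz\ll hT$ and $\log(hT)\ll\log h$ in the regime considered. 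Arranging the estimates so that two of the three density factors $(1+hA_j/Q_j)^{-1}$ pair with the sums over $a_1$ and $a_2$, a single factor of $h$ survives; the remaining $Q_1Q_2$ cancels against the denominator $\phi(g)^3\phi(xyz)^2$ after outer summation, leading to a per-box contribution of $O\bigl(h(\log_2 h)^2\bigr)$. Multiplying by the $O((\log h)(\log T)^6)$ boxes gives the claimed bound.

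The main obstacle is the bookkeeping of the cancellation $h^3\mapsto h$ through the pairing of density factors with the counts of $a_j$'s, exactly as in the proof of Lemma~\ref{lemme estS3}; the simplification here is that trivial bounds for $W^*$ suffice, and we can afford to absorb the full logarithmic price into the factor $(\log T)^6$. The auxiliary $(\log_2 h)^2$ comes from the ubiquitous losses $n/\phi(n)\ll\log_2(3n)$ applied to $n\ll hT$.
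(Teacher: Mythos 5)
Your argument is correct in substance and lands on the stated bound, but it takes a more elaborate route than the paper, and the extra machinery is precisely where your write-up is vaguest. The paper's proof never decomposes dyadically in $\|a_1/q_1+a_2/q_2\|$ and never uses the refined count $\ll A_3Q_2/Q_3+1$ of admissible $a_2$ for fixed $a_1$: it simply bounds the third factor by $E_h^+\leq h$, bounds the weighted $(x,y)$-count under the congruence mod $z$ by $(XY/Z)(\log_2h)^2$ via Lemma~\ref{lemme estS} exactly as you do, and then observes that, because $A_1$ and $A_2$ are already confined to $Q_j/(Th)<A_j\leq Q_jT/h$, the crude summation $\sum_{A_1,A_2}A_1A_2h^3\big((1+hA_1/Q_1)(1+hA_2/Q_2)\big)^{-1}\ll hQ_1Q_2(\log T)^2$ loses only $(\log T)^2$ against the expected size $hQ_1Q_2$; the remaining $Q_1Q_2/(XYZ^2)=g^2$ is then absorbed by the outer sum over $g\leq T$, $z\leq 2hT$ and the $O((\log T)^2)$ pairs $(X,Y)$ with $Z< X,Y\leq ZT^4$. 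Your route works too, but two points need tightening: the sentence about ``arranging the estimates so that two of the three density factors pair with the sums over $a_1$ and $a_2$'' hides a genuine case analysis (when $A_3Q_2/Q_3<1$ you must instead pair $(1+hA_2/Q_2)^{-1}$ with the single admissible $a_2$ and sum $1/A_2$ over dyadic $A_2\geq1$), and the claim that $X\geq h/T$ follows from the earlier reduction is not quite right --- one only gets $TX^2\geq gYZ=Q_1>h/T$, hence $X\gg \sqrt{h}/T$, which is still enough for an $O(\log(hT))$ count of dyadic $X$. In short, the refinement in $A_3$ is what the proof of Lemma~\ref{lemme Sigma} genuinely requires, but in this fourth case the trivial bound on the third factor already suffices and makes the bookkeeping noticeably shorter.
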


\begin{proof}
Nous utilisons la majoration 
\begin{align*}
F(x,y,gz,a_1,a_2) 
 &\ll \frac{ h^3}{X^2Y^2(1+{hA_1/Q_1})(1+{hA_2/Q_2})}
.\end{align*}
Gr\^ace au Lemme~\ref{lemme estS},
nous obtenons la majoration 
\begin{align*}  
\sigma^+(g,z,a_1,a_2) &\ll    \sum_{\substack{ Y\leq y<2Y\\ (y,a_1gz)=1}}\frac{\mu(y)^2 y^2}{\phi(y)^2}
 \sum_{\substack{ X\leq x<2X\\ a_1x\equiv - a_2 y(\!\bmod z)\\ (x,a_2gy)=1\\ (a_1x+a_2 y,g)=1}}\frac{\mu(x)^2 x^2}{\phi(x)^2}\frac{ h^3}{X^2Y^2(1+{hA_1/Q_1})(1+{hA_2/Q_2})}
 \cr&\ll \frac{(\log_2 h)^2 h^3}{X Y Z(1+{hA_1/Q_1})(1+{hA_2/Q_2})},
\end{align*} 
De plus,   nous avons 
$$\sum_{\substack{A_1,A_2\\Q_1/Th <A_1\leq  Q_1T/h \\ Q_2/Th <A_2\leq  Q_2T/h}}\frac{A_1A_2h^3}{ (1+{hA_1/Q_1})(1+{hA_2/Q_2})}
\ll  hQ_1Q_2(\log T)^2 ,$$
o\`u les $A_j$ sont des puissances de $2$.
Nous obtenons alors   une contribution 
\begin{align*}
S^+_{4}&\ll h(\log_2 h)^2(\log T)^2\sum_{\substack{g\leq T, z\sim Z\\     Z<X,Y\leq ZT^4\\ Z\leq hT }}
\frac{ \mu(gz )^2 z Q_1Q_2 }{\phi(g)^3\phi( z)^2 XYZ^2}
\cr&\ll h(\log_2 h)^2(\log T)^4\sum_{\substack{g\leq T\\      z\leq 2hT }}
\frac{ \mu(gz )^2  g^2z}{\phi(g)^3\phi( z)^2  }\ll h(\log h)(\log_2 h)^2(\log T)^6.
\end{align*}
\end{proof}

\subsection{Cinqui\`eme cas : $g>T$.}
Nous montrons que la contribution des $g>T$ est n\'egligeable gr\^ace aux oscillations de la fonction de M\"obius. 

Nous fixons $X,Y,Z$ des puissances de $2$. 
Nous consid\'erons que $(x,y,z) $ appartient \`a un ensemble $E$ inclus dans $D(\delta)$ et $g\in I_G(X,Y,Z)$ avec $I_G(X,Y,Z)$ un intervalle inclus dans $ ]G,2G]$ avec $T<G\leq hT$. Nous notons $S_5(X,Y,Z)$ la contribution. Nous notons $S_5(h) $
la somme des $| S_5(X,Y,Z)|$ lorsque $(X,Y,Z)$ v\'erifie  $YZ\leq hT$.
\begin{lemma}\label{lemme estS5}   Lorsque $h\geq 2$, $T\geq 1$, nous avons   l'estimation
    \begin{equation}\label{estS5} 
S_5(h)\ll  \frac{h (\log h)^4}{ {\cal L} (T)}  + \frac{h (\log h)^{8} }{  T } .
\end{equation}
\end{lemma}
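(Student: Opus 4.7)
L'id\'ee directrice est qu'avec $g>T$, le facteur oscillant $\mu(g)$ doit fournir un gain ${\cal L}(T)^{-1}$, tandis que le terme $h(\log h)^8/T$ du majorant proviendra d'un d\'ecoupage pr\'eliminaire sur les $a_j$ via le Lemme~\ref{lemme Sigma}.

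Je commencerai par d\'ecouper selon des \'echelles dyadiques $A_1,A_2$ de $|a_1|,|a_2|$ et par appliquer le Lemme~\ref{lemme Sigma} pour me restreindre au domaine principal $Q_j/(hT)<|a_j|\leq Q_jT/h$ ($j=1,2$); la contribution compl\'ementaire est englob\'ee dans $O(h(\log h)^8/T)$, ce qui traite le second terme du majorant.

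Pour $(X,Y,Z,A_1,A_2,x,y,z,a_1,a_2)$ fix\'es dans ce domaine, la somme int\'erieure sur $g\in I_G(X,Y,Z)\subset\,]G,2G]$ est de la forme $\sum_g \mu(g)\,\alpha(g)\,F_g$ o\`u $\alpha(g):=\mu(gxyz)^2\phi_2(g)/(\phi(g)\phi_1(g)\phi(xyz)^2)$ est arithm\'etique et $F_g:=F(x,y,gz,a_1,a_2)$ est lisse en $g$. Une sommation d'Abel en $g$ ram\`ene alors cette somme \`a une int\'egrale de $\partial_g F_g$ pond\'er\'ee par les sommes partielles
\[
M(G_0):=\sum_{G<g\leq G_0}\mu(g)\,\alpha(g),\qquad G<G_0\leq 2G,
\]
lesquelles v\'erifient, d'apr\`es \eqref{somgmu}, $|M(G_0)|\ll \psi_{1/2}(xyz)\,{\cal L}(G)^{-1}\ll \psi_{1/2}(xyz)\,{\cal L}(T)^{-1}$ puisque $G>T$, fournissant le gain ${\cal L}(T)^{-1}$ recherch\'e.

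Le point le plus d\'elicat sera d'estimer soigneusement $\partial_g F_g$. Les arguments des $E_h$ dans $F_g$ \'etant proportionnels \`a $1/g$, la majoration $E_h'(\alpha)\ll hE_h^+(\alpha)$ de \eqref{majprodetderivee} donne un contr\^ole de $\partial_g F_g$ par $F_g^+/g$, o\`u $F_g^+$ est obtenu en rempla\c{c}ant chaque $E_h$ par $E_h^+$. Apr\`es int\'egration en $g\in\,]G,2G]$, sommation en $(a_1,a_2)$ (via une majoration du type \eqref{majsommeA1A2}), puis en $A_1,A_2,X,Y,Z,G$ (chaque param\`etre dyadique co\^utant au plus un $\log h$), on aboutit \`a un majorant global en $h(\log h)^4/{\cal L}(T)$, ce qui ach\`eve la d\'emonstration.
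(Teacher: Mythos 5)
There is a genuine gap at the very step you yourself flag as ``le point le plus d\'elicat'': the bound $\partial_g F_g \ll F_g^+/g$ is false. Differentiating $E_h(a_2/(gxz))$ in $g$ produces $-\tfrac{a_2}{g^2xz}\,E_h'(a_2/(gxz))$, and with $|E_h'(\alpha)|\ll hE_h^+(\alpha)$ this term has size $\tfrac{1}{g}\cdot h\|\alpha_2\|\,E_h^+(\alpha_2)$ times the two remaining factors, where $\alpha_2=a_2/(gxz)$; since $\|\alpha\|E_h^+(\alpha)\leq 1$, what you really get is that the factor $E_h^+(\alpha_2)$ is replaced by something of size $h$, not that it is preserved. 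Your claimed bound would require $h\|\alpha_2\|\ll 1$, which holds only when $\|a_2/q_2\|\ll 1/h$; in the main range $\|a_2/q_2\|$ runs up to $T/h$, so you lose a factor as large as $T$. With $T=\exp\{150(\log_2 h)^2\}$ while ${\cal L}(T)$ is only a fixed power of $\log h$, a loss of $T$ against a gain of ${\cal L}(T)^{-1}$ is fatal: the final bound would be $\gg hT/{\cal L}(T)$, far larger than the target $h(\log h)^2$.

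The correct derivative estimate (the paper's \eqref{calculpartialg}, stated for $\partial_g(F/g^3)$ after factoring the arithmetic weight as $\mu(g)\phi_2(g)\mu(g)^2g^3/\phi(g)^3$) is a sum of three terms, each retaining only \emph{two} of the three factors $(1+h\|\alpha_j\|)^{-1}$. This changes the remainder of the argument substantially: when summing over $a_1,a_2$, the variable whose damping factor has been lost contributes a full $\sum_{a_j\sim A_j}1\asymp A_j$ up to $A_j\asymp TQ_j/h$, reintroducing a factor $T$ unless one exploits the linear relation among the three frequencies. The paper does so by setting $a_3$ via $a_3z=-a_1x-a_2y$, imposing $|a_3|\sim A_3$ dyadically, and summing over the two indices whose factors survive; your plan contains no substitute for this step. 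Two smaller points: the partial sums you need run over $g$ coprime to $a_1a_2xyz(a_1x+a_2y)$, not merely to $xyz$, so \eqref{somgmu} must be upgraded to the $m$-dependent bound \eqref{majMm}; and the cutoffs $Q_j/(hT)<|a_j|\leq Q_jT/h$ depend on $g$, so before any Abel summation in $g$ they must be replaced by $G$-dependent cutoffs, at a cost controlled once more by the Lemme~\ref{lemme Sigma}.
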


\begin{proof} 
 Quitte \`a multiplier par un facteur $\log h$, nous pouvons supposer $G\leq g <2G$ avec $T\leq G\leq hT.$ Sans changer les termes d'erreur, gr\^ace au Lemme \ref{lemme Sigma},@ nous pouvons remplacer les conditions 
$  gYZ/Th <A_1\leq  gYZT/h , \, gXZ/Th <A_2\leq  gXZT/h   $ 
en
$  GYZ/Th <A_1\leq  2GYZT/h , \, GXZ/Th <A_2\leq  2GXZT/h .  $ Nous faisons une sommation d'Abel par rapport \`a $g$ en utilisant la majoration
\begin{equation}
    \label{majMm}
M_m(G):=\sum_{\substack{g< G\\ (g,m)=1 }}\frac{\mu(g)\phi_2(g) \mu(g)^2g^3}{\phi(g)^3}
\ll  \psi_{1/2}(m)G{\cal L} (G)^{-2}.\end{equation} 
Cette majoration s'obtient de mani\`ere classique par la m\'ethode d'analyse complexe.
Nous avons la relation\begin{align*}
& x^2y^2\frac{\partial F}{\partial g} (x,y,gz,a_1,a_2) \cr&= -  \frac{a_2}{g^2xz}
\Big\{ 
E_h\Big( -\frac{a_2}{gxz}-\frac{a_1}{gyz}  \Big) E_h'\Big( \frac{a_2}{gxz} \Big)E_h\Big( \frac{a_1}{gyz} \Big)-E'_h\Big( -\frac{a_2}{gxz}-\frac{a_1}{gyz}  \Big) E_h\Big( \frac{a_2}{gxz} \Big)E_h\Big( \frac{a_1}{gyz} \Big)\Big\}\cr&\quad -\frac{a_1}{g^2yz}
\Big\{
E_h\Big( -\frac{a_2}{gxz}-\frac{a_1}{gyz}  \Big) E_h\Big( \frac{a_2}{gxz} \Big)E'_h\Big( \frac{a_1}{gyz} \Big)-E'_h\Big( -\frac{a_2}{gxz}-\frac{a_1}{gyz}  \Big) E_h\Big( \frac{a_2}{gxz} \Big)E_h\Big( \frac{a_1}{gyz} \Big)\Big\} 
  \cr&\!\!\!\!=
  -  \frac{a_2}{g^2xz} 
E_h\Big( -\frac{a_2}{gxz}-\frac{a_1}{gyz}  \Big) E_h'\Big( \frac{a_2}{gxz} \Big)E_h\Big( \frac{a_1}{gyz} \Big) 
-\frac{a_1}{g^2yz} 
E_h\Big( -\frac{a_2}{gxz}-\frac{a_1}{gyz}  \Big) E_h\Big( \frac{a_2}{gxz} \Big)E'_h\Big( \frac{a_1}{gyz} \Big)\cr&  +\Big(\frac{a_2}{g^2xz}+\frac{a_1}{g^2yz}\Big)
 E'_h\Big( -\frac{a_2}{gxz}-\frac{a_1}{gyz}  \Big) E_h\Big( \frac{a_2}{gxz} \Big)E_h\Big( \frac{a_1}{gyz} \Big) 
\end{align*} 
qui fournit, \`a l'aide de \eqref{majprodetderivee}, la majoration 
\begin{equation}\label{calculpartialg}\begin{split}
   \frac{\partial}{\partial g} \Big( \frac{ F  (x,y,gz,a_1,a_2)}{g^3} \Big)
   &\ll \frac{h^3  }{G^4(XY)^2}\Big( \frac{1}{(1+{h/g||a_1/yz||})(1+{h/g ||a_1/yz+ a_2/xz||)} }\cr&\qquad+\frac{1}{(1+{h/g||a_2/xz||})(1+{h/g ||a_1/yz+ a_2/xz||)} }\cr&\qquad+\frac{1}{(1+{h/g||a_2/xz||})(1+{h/g ||a_1/yz||)} } \Big).
\end{split}  
\end{equation}
Nous avons $ S_{5}(X,Y,Z)=\sum_{T\leq G\leq hT}  S_{5}(G,X,Y,Z)$
avec 
\begin{align*}
    S_{5}(G,X,Y,Z) 
&= \sum_{\substack{   z \sim Z}} \frac{  \mu(  z)^2 }{ \phi(z)^2}
    \!\!\!\sum_{\substack{| a_2 |\sim A_2 \\  | a_1 |\sim A_1\\ (a_2a_1, z)=1 }}
     \sum_{\substack{ X\leq x<2X\\ (x,y,z)\in E\\  (x,a_2)=1 }}\frac{\mu(x)^2x^2 }{\phi(x)^2}\cr&\quad
\sum_{\substack{ Y\leq y<2Y\\ a_2y\equiv - a_1x(\!\bmod z)\\(y,a_1 zx)=1}}\frac{\mu(y)^2 y^2}{\phi(y)^2}
\sum_{\substack{g\in I_G(x,y,z)\cap [G,2G[ \\ (g,a_2a_1xyz(a_1x+a_2 y) )=1}}\!\!\!\!\!\!\frac{\mu(g)\phi_2(g) \mu(g)^2g^3}{\phi(g)^3 }
\frac{ F  (x,y,gz,a_1,a_2)}{g^3} 
.
\end{align*} Nous \'ecrivons $I_G(x,y,z)\cap [G,2G[=[G', G''[$ (Le cas d'intervalle ferm\'e ou ouvert se traite de la m\^eme mani\`ere).
Posant $m=a_2a_1xyz(a_1x+a_2 y)$, la somme int\'erieure vaut 
\begin{align*}
\frac{M_m(G'')  F  (x,y,G''z,a_1,a_2)}{G''^3}&-
\frac{M_m(G')  F  (x,y,G'z,a_1,a_2)}{G'^3}\cr&\quad -\int_{G'}^{G''}M_m(g) \frac{\partial}{\partial g}\Big( \frac{ F  (x,y,gz,a_1,a_2)}{g^3} \Big) \d g.  \end{align*}

D'apr\`es \eqref{majMm} et \eqref{majprodetderivee}, les deux premiers termes sont major\'es par
$$\ll \frac{\psi_{1/2}(m)  h^3{\cal L} (G)^{-2}}{ X^2Y^2G^2(1+{hA_1/GYZ})(1+{hA_2/GXZ})} 
\ll \frac{\psi_{1/2}(m)  hZ^2 {\cal L} (G)^{-2}}{ XY A_1 A_2 }.$$
Apr\`es sommation sur   $y\sim Y$ satisfaisant $a_2y\equiv - a_1x(\!\bmod z)$, puis sur  $a_1\sim A_1$, $a_2\sim A_2$, leur contribution est alors facilement major\'ee par
$$\ll      h  {\cal L} (G)^{-2}\sum_{\substack{   z \sim Z}} \frac{\psi_{1/2}(z)  \mu(  z)^2 Z}{ \phi(z)^2} 
\sum_{\substack{ P(x)\leq h^3\\    (x,a_2)=1 }}\frac{\psi_{1/2}(x)\mu(x)^2x  }{\phi(x)^2}
\ll h  (\log h){\cal L} (G)^{-2}.$$
Le nombre de $(A_1,A_2)$ est major\'e par $O((\log T)^2)$ de sorte que la contribution \`a la somme des $|S_{5}(G,X,Y,Z)|$ de ces termes est 
$$\ll h (\log h)^3 {\cal L} (G)^{-2}(\log T)^2. $$
 
Un majorant de l'int\'egrale issu de la majoration de la d\'eriv\'ee par \eqref{calculpartialg} peut \^etre scind\'e en trois parties. La troisi\`eme correspond au troisi\`eme terme du majorant de \eqref{calculpartialg} et fournit un majorant de la forme 
$$=O\Big(\frac{\psi_{1/2}(m)  h^3{\cal L} (G)^{-1}}{X^2Y^2 G^2(1+{hA_1/GYZ})(1+{hA_2/GXZ})}\Big)$$
comme pr\'ec\'edemment. La premi\`ere et la deuxi\`eme n\'ecessitent de faire un changement d'indice de sommations en prenant $a_3$ d\'efini par $a_3z=-a_1x-a_2y$ et de sommer sous la condition suppl\'ementaire $|a_3|\sim A_3.$ Le reste des calculs est semblable au cas pr\'ec\'edent.
Nous obtenons ainsi une contribution \`a la somme des $|S_{5}(G,X,Y,Z)|$
$$   
\ll  h (\log h)^3(\log T)^3 {\cal L} (T)^{-2} $$ o\`u les puissances suppl\'ementaires de  $\log h$ correspondent \`a la fixation de $A_1,A_2 $ mais aussi  \'eventuel\-lement de $A_3.$
Il reste \`a multiplier par $O(\log h)$ le majorant du nombre de valeurs possibles de valeurs de $G$ pour obtenir la majoration du lemme.
\end{proof}

\subsection{Sixi\`eme cas :  $h/T<gYZ\leq hT$, $gX\leq hT$.}

Le sixi\`eme cas correspond au cas $h/T<gYZ\leq hT$, $gX\leq hT$, $(x,y,z)\in D(\delta)$.

\begin{lemma}\label{lemme estS6} Soit $\delta\in \{ (1,1),(1,0),(0,1),(0,0)\}$. Lorsque $h\geq 2$ et $T\geq 1$, nous avons   l'estimation
    \begin{equation}\label{estS6} 
S_6(\delta)\ll \frac{h (\log h)^4}{ {\cal L} (T) }+ h (\log h)(\log_2 h)^2(\log T)^6 +\frac{h (\log h)^{8} }{  T } .
\end{equation}
\end{lemma}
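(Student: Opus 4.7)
L'id\'ee est de d\'ecomposer $S_6(\delta)$ selon trois sous-r\'egions disjointes recouvrant le domaine $\{h/T < gYZ \leq hT,\ gX \leq hT,\ (x,y,z)\in D(\delta)\}$, \`a savoir (a) $g > T$ ; (b) $g \leq T$ et $X \leq ZT^4$ ; (c) $g \leq T$ et $X > ZT^4$. Chaque sous-r\'egion sera trait\'ee par l'un des trois lemmes pr\'ec\'edents, dont les bornes respectives correspondent pr\'ecis\'ement aux trois termes d'erreur apparaissant dans la majoration annonc\'ee.

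Pour la sous-r\'egion (a), la condition $gYZ \leq hT$ combin\'ee \`a $g > T$ entra\^ine $YZ \leq h$, donc en particulier $YZ \leq hT$. Le Lemme~\ref{lemme estS5} s'applique alors directement et produit une contribution $\ll h(\log h)^4/\mathcal{L}(T) + h(\log h)^8/T$. Pour la sous-r\'egion (b), les hypoth\`eses $g \leq T$, $gX \leq hT$ et $X \leq ZT^4$ sont essentiellement celles du Lemme~\ref{lemme estS4}, qui fournit le majorant $\ll h(\log h)(\log_2 h)^2(\log T)^6$.

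Pour la sous-r\'egion (c), la plus d\'elicate, nous comptons reprendre la m\'ethode du Lemme~\ref{lemme estS3}. Comme soulign\'e au d\'ebut de sa preuve, les \'etapes initiales --- restriction des variables $a_1, a_2$ \`a l'aide du Lemme~\ref{lemme Sigma}, double sommation d'Abel et application du Lemme~\ref{lemme sumW} --- ne n\'ecessitent pas l'hypoth\`ese $Q_1 > hT$. Les termes d'erreur produits seront donc identiques \`a ceux de la preuve du Lemme~\ref{lemme estS3}, \`a savoir $O\big(h(\log T)^2 \log h + h(\log h)^{10}/\mathcal{L}(T) + h(\log h)^{12}/T\big)$, tous absorb\'es par notre majorant.

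L'obstacle principal r\'eside dans le contr\^ole du terme principal du Lemme~\ref{lemme estS3} dans ce nouveau cadre. Celui-ci provient de l'extension des int\'egrales en $\alpha_1, \alpha_2$ \`a l'intervalle complet $[0, 1/2]$, op\'eration qui requiert $Q_1 > hT$ et $Q_2 > hT$. Or ici $Q_1 = gYZ \leq hT$, de sorte que les int\'egrales restent confin\'ees \`a des intervalles de longueur $O(\log T)$ au lieu de $O(\log h)$. L'application du Lemme~\ref{calculmajI} sur ces intervalles restreints devrait fournir une majoration en $O\big(h(\log T)^2 \log h\big)$, qui s'absorbe \'egalement dans les bornes annonc\'ees. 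La combinaison des contributions des trois sous-r\'egions conclura alors la preuve.
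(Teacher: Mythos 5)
Votre d\'ecoupage en trois sous-r\'egions et le traitement de (a) et (b) par les Lemmes~\ref{lemme estS5} et~\ref{lemme estS4} co\"incident avec les premi\`eres r\'eductions de la preuve du papier. En revanche, votre traitement du terme principal dans la sous-r\'egion (c) contient une lacune fatale. D'une part, la double sommation d'Abel du Lemme~\ref{lemme estS3} exige que les deux variables $a_1$ et $a_2$ parcourent des intervalles longs ; or ici $Q_1=gYZ\leq hT$ entra\^ine $A_1\geq Q_1/hT$ pouvant valoir $1$ : la somme en $a_1$ est courte et le Lemme~\ref{lemme soma1a2} ne peut plus l'\'evaluer asymptotiquement (son terme d'erreur domine alors le terme principal). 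D'autre part, et surtout, la restriction des int\'egrales en $\alpha_j$ \`a $[Q_j/hT,\,Q_jT/h]$ ne procure \emph{aucune} \'economie sur l'int\'egrale triple de la m\'ethode du cercle : apr\`es normalisation, la masse de $\int\!\!\int E_h(\alpha_1)E_h(\alpha_2)E_h(-\alpha_1-\alpha_2)\,\d\alpha_1\d\alpha_2$ est concentr\'ee en $\alpha_j\asymp 1/h$, point int\'erieur \`a $[1/hT,\,T/h]$, de sorte que l'int\'egrale restreinte reste $\asymp h$. En sommant ensuite en valeur absolue sur les $O((\log h)^2\log T)$ cubes dyadiques admissibles et sur $g\leq T$, on obtient un terme de taille $\asymp h(\log h)^2(\log T)^2$, du m\^eme ordre que le terme principal du Th\'eor\`eme~\ref{mainth} et nullement absorb\'e par \eqref{estS6} ; votre majoration annonc\'ee $O(h(\log T)^2\log h)$ n'est donc pas justifi\'ee.

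L'id\'ee essentielle qui vous manque est que la n\'egligeabilit\'e de ce cas ne vient pas de la petitesse des int\'egrales mais des oscillations de $\mu(g)$. Le papier proc\`ede ainsi : on se ram\`ene d'abord \`a $Q_2=gXZ>hT$ et on ne fait qu'une sommation d'Abel simple sur la variable longue $a_2$ (Lemme~\ref{lemme soma2}), on compl\`ete l'int\'egrale en $\alpha_2$ gr\^ace \`a l'estimation d\'elicate \eqref{maj completion integrale}, ce qui laisse un facteur $|E_h(a_1u_1/(gzY))|^2$ port\'e par la somme courte en $a_1$. On somme alors sur $z$, $Z$, $X$, $Y$, on \'etend la somme en $a_1$ jusqu'\`a $T^2$ et on ajuste le domaine de fa\c con \`a pouvoir \emph{intervertir} les sommations en $a_1$ et en $g$ ; la conclusion repose sur la relation
$$\sum_{\substack{1\leq g\leq T\\ (a_1,g)=1}}\frac{\mu(g)\phi_2(g)}{\phi(g)\tilde\phi(g)}=-\sum_{\substack{g>T\\ (a_1,g)=1}}\frac{\mu(g)\phi_2(g)}{\phi(g)\tilde\phi(g)}\ll \psi_{1/2}(a_1)^2{\cal L}(T)^{-2},$$
qui fournit le facteur ${\cal L}(T)^{-2}$ annihilant le terme principal. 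Sans cette interversion et cette compensation de M\"obius, la sous-r\'egion (c) n'est pas n\'egligeable et votre argument ne peut aboutir.
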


\begin{proof}
Nous   modifions le domaine de sommation.
Tout d'abord, gr\^ace au cinqui\`eme cas, nous nous restreignons au cas o\`u $g\leq T$, puis gr\^ace au quatri\`eme cas nous excluons le cas  $X\leq Z  T^4 $. Nous pouvons donc nous placer dans le cas 
$$h/T<gYZ\leq hT , \quad gX\leq hT, X>ZT^4.$$
Enfin, d'apr\`es le Lemme \ref{lemme Sigma}, nous pouvons nous placer dans le cas o\`u
$$Q_1/hT<A_1\leq TQ_1/h, \quad Q_2/hT<A_2\leq TQ_2/h.$$

 La somme $\sigma 
(g,z,a_1,a_2)$  en $x$ et $y$ est approch\'ee de mani\`ere satisfaisante par la quantit\'e $\sigma_M
(g,z,a_1,a_2)$ d\'efinie en \eqref{defsigmaM}.
Nous avons aussi $gXZ>h/T.$ Il est alors possible de la m\^eme mani\`ere qu'au quatri\`eme cas de majorer la contribution du sous-cas 
$h/T<gXZ\leq hT.$ Nous pouvons donc nous placer dans le cas $Q_2=gXZ>hT.$

\`A partir du Lemme \ref{lemme soma2}, une sommation d'Abel permet d'estimer $$\sum_{Q_2/hT<|a_2|\leq Q_2T/h}\sigma_M
(g,z,a_1,a_2).$$ Le terme principal obtenu correspond au terme principal du Lemme \ref{lemme soma2} est 
\begin{align*}
    &  =
\frac {1}{\zeta(2) } \frac{  \phi_2(g)\phi_1(gz)}{ \varphi_{-1}(gz) }\frac{1}{zXY}\Big(\int_{Q_2/hT}^{Q_2T/h}+\int_{-Q_2T/h}^{-Q_2/hT}\Big)\cr&\qquad\qquad \int_{1/2}^{1}\int_{1/2}^{1}
E_h\Big( -\frac{\alpha_2u_2}{gzX}-\frac{a_1u_1}{gzY}  \Big) E_h\Big( \frac{\alpha_2u_2}{gzX} \Big)E_h\Big( \frac{a_1u_1}{gzY} \Big)\d u_1\d u_2\d \alpha_2.
\end{align*}

Afin de compl\'eter l'int\'egrale en $\alpha_2$, nous utilisons le r\'esultat suivant. Posant
$$R(\alpha_1,T):=E_h^+ ( \alpha_1)^2/T+ E_h^+ ( \alpha_1)   E_h^+ ( T/h+\alpha_1) ,$$
lorsque $1/hT<|\alpha_1|\leq T/h$, nous expliquons comment montrer 
\begin{equation}
    \label{maj completion integrale}
\Big(\int_0^{{1/hT}}+\int_{T/h}^{1/2}\Big)|E_h (  {\alpha_1 } )E_h ( {\alpha_2 } )E_h ( -\alpha_1-{\alpha_2 } )|\d\alpha_2 \ll R(\alpha_1,T).\end{equation}

Lorsque $0\leq \alpha_2 \leq 1/2hT$, nous avons alors
$|\alpha_1+\alpha_2 |\geq |\alpha_1|-|\alpha_2 |\geq \tfrac 12|\alpha_1|$.
Nous
utilisons alors $E_h ( -\alpha_1-{\alpha_2 } )\ll |E_h^+ ( \alpha_1)|$
et $E_h ({\alpha_2 } )\ll h$ ce qui fournit 
$$\int_0^{{1/2hT}} |E_h (  {\alpha_1 } )E_h ( {\alpha_2 } )E_h ( -\alpha_1-{\alpha_2 } )|\d\alpha_2 \ll E_h^+(\alpha_1)^2 /T.$$
Lorsque $1/2hT< \alpha_2 \leq 1/ hT$ et $\alpha_2/2<-\alpha_1<2\alpha_2$, nous utilisons $E_h ( -\alpha_1-{\alpha_2 } )\ll h $ et $E_h ({\alpha_2 } )\ll E_h^+ ( -\alpha_1)$ ce qui suffit encore
$$\int_{1/2hT}^{{1/hT}} |E_h (  {\alpha_1 } )E_h ( {\alpha_2 } )E_h ( -\alpha_1-{\alpha_2 } )|\d\alpha_2 \ll |E_h^+ (\alpha_1)|^2 /T.$$
La contribution des $\alpha_2\in ]\max\{{T/h}, -2\alpha_1\},{1/2}]$ est facilement major\'ee puisqu'alors nous avons $ E_h (  {\alpha_1 } ) E_h ( -\alpha_1-{\alpha_2 } )\ll  E_h^+ ( \alpha_1)^2$ et $E_h (  {\alpha_2 } )\ll h/T$.
Reste le cas $ \alpha_2\in ] {T/h}, 2T/h]$ et $-\alpha_1\in ] {T/2h},  T/h]$. Nous avons $$|E_h (  {\alpha_1 } )E_h ( {\alpha_2 } )|\ll  E_h^+ (  {\alpha_1 } ) h/T .$$
et $|E_h ( -\alpha_1- \alpha_2   )|\ll E_h^+ (T/h+\alpha_1).$ Nous avons donc bien \eqref{maj completion integrale}.
\bigskip

En utilisant \eqref{maj completion integrale} pour compl\'eter l'int\'egrale, nous obtenons un terme principal \'egal \`a 
\begin{equation}
    \begin{split}
        \label{somzEh2}
    & = 
 \frac{  g\phi_1(gz)\phi_2(g)}{\zeta(2) \varphi_{-1}(gz)    Y}
   \int_{-1/2}^{1/2}\int_{1/2}^{1}\int_{1/2}^{1}
E_h\Big( - {\alpha_2 } -\frac{a_1u_1}{gzY}  \Big) E_h ( {\alpha_2 } )E_h\Big( \frac{a_1u_1}{gzY} \Big)\d u_1\frac{\d u_2}{u_2}\d \alpha_2
\cr&= 
\frac{(\log 2) g\phi_1(gz)  \phi_2(g)}{\zeta(2) \varphi_{-1}(gz)    Y} \int_{1/2}^{1} \Big|E_h\Big( \frac{a_1u_1}{gzY} \Big)\Big|^2\d u_1.
\end{split}
\end{equation}
La contribution du terme d'erreur est clairement $\ll h(\log h)^4/T^{1/2}.$ Nous n'indiquons pas les d\'etails.

Sommons maintenant le terme principal \eqref{somzEh2} lorsque $z\sim Z.$ Nous pouvons supposer $Z>T.$
Lorsque $\Re e (s)>1$, nous avons 
$$\sum_{\substack{z=1\\ (z,a_1g)=1}}^\infty\frac{\mu^2(z)}{\phi_{-1}(z)\phi_1(z)z^s}=\zeta(s)\prod_{p\mid a_1g}\Big(1-\frac{1}{p^s}\Big)
\prod_{p\nmid a_1g}\Big(1+\frac{1}{p^{2+s}(1-1/p^2)}
-\frac{1}{p^{2s}(1-1/p^2)}\Big)  .
$$ Soit $\tilde\phi$ la fonction arithm\'etique fortement additive d\'efinie par 
$$ \tilde\phi(p)=1+1/p-1/p^2.$$
Gr\^ace au Lemme \ref{lemme sum xi}, nous obtenons
\begin{align*}\sum_{\substack{1\leq z\leq Z\\ (z,a_1g)=1}}\frac{\mu^2(z)}{\phi_{-1}(z)\phi_1(z)}&=\phi_1(a_1g) \prod_{p\nmid a_1g}\Big(1-\frac{1}{p^2(1+1/p)}\Big)Z+O\big( \psi_{1/2}(a_1g)^2Z^{1/2}\big)\cr&
=C_2\phi_1(a_1g)\frac{\phi_{-1}(a_1g)}{\tilde\phi(a_1g)}  Z+O\big( \psi_{1/2}(a_1g)^2Z^{1/2}\big),\end{align*}
avec
$$C_2:=\prod_{p }\Big(1-\frac{1}{p^2(1+1/p)}\Big).   $$
Apr\`es sommation d'Abel, nous obtenons  pour la quantit\'e \eqref{somzEh2}
un terme principal \'egal \`a 
\begin{equation}\label{somme a1}
    \begin{split} 
    &
=C_2\phi_1(a_1g) \frac{\phi_{-1}(a_1g)}{\tilde\phi(a_1g)Y}  \int_{Z}^{2Z}\frac{1}{t^2}\int_{1/2}^{1} \Big|E_h\Big( \frac{a_1u_1}{gtY} \Big)\Big|^2\d u_1\d t
\cr&=C_2\phi_1(a_1g) \frac{g\phi_{-1}(a_1g)}{\tilde\phi(a_1g) }   \int_{1/2gZY}^{1/gZY} \int_{1/2}^{1}  |E_h (   {a_1u_1u_3} )|^2\d u_1\d u_3.
 \end{split}
\end{equation}

Nous rappelons que la somme sur les $a_1$ porte sur l'intervalle $[A_1,2A_1[$ avec $Q_1/hT\leq A_1\leq Q_1T/h$ ce qui implique $1\leq A_1\leq T^2.$ Cette somme peut \^etre vue une somme courte (lorsque $Q_1=h/T$, nous avons $A_1=1$) ce qui emp\^eche d'estimer asymptotiquement cette somme.

Nous devons estimer
$$C_2\sum_{g\leq T}\frac{\mu(g)  \phi_{2}(g)}{\phi(g) \tilde\phi( g)}  \sum_{(X,Y,Z)}
 \int_{1/2gZY}^{1/gZY} \int_{1/2}^{1} \sum_{\substack{ 1\leq a_1\leq gYZT/h\\ (a_1,g)=1}}\phi_1(a_1 ) \frac{ \phi_{-1}(a_1 )}{\tilde\phi(a_1 ) }   |E_h (   {a_1u_1u_3} )|^2\d u_1\d u_3,$$ 
o\`u $(X,Y,Z)$ v\'erifie 
\begin{equation}
    \label{ineg1XYZ}
h/T<gYZ\leq hT , \quad gX\leq hT,\quad  X>ZT^4,\quad  Z\geq Y.\end{equation}
Montrons que la double int\'egrale ci-dessus est 
$O(h).$ En effet, la somme int\'erieure est $\ll h/u_1u_3. $ Lorsque $u_3\leq 1/a_1u_1h$, la majoration triviale $|E_h (   {a_1u_1u_3} )|^2\leq h^2$ fournit le r\'esultat. Lorsque $u_3> 1/a_1u_1h$, la majoration   $|E_h (   {a_1u_1u_3} )|^2\leq 1/(a_1u_1u^3)^3$ fournit aussi cette majoration apr\`es sommation sur $a_1>1/u_1u_3h$. 
 
Nous pouvons donc changer les in\'egalit\'es \eqref{ineg1XYZ} en 
\begin{equation}
    \label{ineg2XYZ}
h/T<gYZ\leq hT , \quad  X\leq h,   \quad  h^{1/2}<Y\leq X\leq h,\end{equation}
quitte \`a n\'egliger un terme en $O(h(\log h)(\log T)^2).$
Ensuite nous observons que nous pouvons sommer $a_1$ jusqu'\`a $T^2$ au lieu de $gYZT/h$ 
quitte \`a n\'egliger un terme en $O(h (\log h)^2(\log T)/T).$ En effet, la somme sur les  $  gYZT/h<a_1\leq T^2$ v\'erifie $|E_h (   {a_1u_1u_3} )|\ll h/T.$

Enfin, nous sommons par rapport \`a la variable $Z$, puis les variables $X$ et $Y$, pour obtenir 
un terme principal 
\begin{align*}
    &=C_2\sum_{g\leq T}\frac{\mu(g)  \phi_{2}(g)}{\phi(g) \tilde\phi( g)}  \sum_{\substack{(X,Y )\\ h^{1/2}<Y\leq X\leq h}}
 \int_{1/2hT}^{T/h} \int_{1/2}^{1} \sum_{\substack{ 1\leq a_1\leq T^2\\ (a_1,g)=1}}\phi_1(a_1 ) \frac{ \phi_{-1}(a_1 )}{\tilde\phi(a_1 ) }   |E_h (   {a_1u_1u_3} )|^2\d u_1\d u_3\cr
 &=\frac{C_2}{8}  (\log h)^2 \sum_{g\leq T}\frac{\mu(g)  \phi_{2}(g)}{\phi(g) \tilde\phi( g)}   
 \int_{1/2hT}^{T/h} \int_{1/2}^{1} \sum_{\substack{ 1\leq a_1\leq T^2\\ (a_1,g)=1}}\phi_1(a_1 ) \frac{ \phi_{-1}(a_1 )}{\tilde\phi(a_1 ) }   |E_h (   {a_1u_1u_3} )|^2\d u_1\d u_3\cr&\qquad+ O\big(h(\log h)(\log T)\big).
 \end{align*}
Ces modifications des domaines de comptage et ses sommations permettent d'intervertir les sommations en $a_1$ et en $g$ pour utiliser 
$$\sum_{\substack{ 1\leq g\leq T \\ (a_1,g)=1}}\frac{\mu(g)  \phi_{2}(g)}{\phi(g) \tilde\phi( g)}=-\sum_{\substack{   g>T \\ (a_1,g)=1}}\frac{\mu(g)  \phi_{2}(g)}{\phi(g) \tilde\phi( g)} \ll \psi_{1/2}(a_1)^2{\cal L}(T)^{-2},$$
ce qui fournit une contribution 
$$\ll h (\log h)^2(\log T){\cal L}(T)^{-2}.$$

En rassemblant ces r\'esultats, nous obtenons bien la majoration \eqref{estS6}.
\end{proof}

\subsection{Septi\`eme cas :  $h/T<gYZ \leq hT$
et $gX=Q_2/Z>hT.$}

Soit $S_7(\delta)$ la contribution du cas   $h/T<gYZ \leq hT$
et $gX=Q_2/Z>hT.$ 
Nous montrons dans cette sous-section le r\'esultat suivant.   

\begin{lemma}\label{lemme estS7} Soit $\delta\in \{ (1,1),(1,0),(0,1),(0,0)\}$. Lorsque $T\geq 1$, nous avons   l'estimation
    \begin{equation}\label{estS7} 
S_7(\delta)\ll \frac{h (\log h)^4}{ {\cal L} (T)}+ h (\log h) (\log T)^2+\frac{h(\log h)^8}{T} .
\end{equation}
\end{lemma}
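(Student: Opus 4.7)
The plan is to mirror the architecture of the proof of Lemma \ref{lemme estS6}, adapting each step to the new regime where $Q_2=gXZ>hT$ rather than $\leq hT$. First, Lemma \ref{lemme estS5} lets us restrict to $g\leq T$, absorbing the contribution of $g>T$ into the stated error. Next, Lemma \ref{lemme Sigma} reduces the summation to the dyadic ranges $Q_1/hT<|a_1|\leq Q_1T/h$ and $Q_2/hT<|a_2|\leq Q_2T/h$. Finally, Lemma \ref{lemme estS4} allows us to discard the sub-case $X\leq ZT^4$, since there the bound $h(\log h)(\log_2 h)^2(\log T)^6$ already fits inside the error term. After these reductions we are left with $g\leq T$, $h/T<gYZ\leq hT$, $gX>hT$, $X>ZT^4$, and the $a_j$'s in their reduced ranges.

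In this regime I would approximate the inner double sum in $(x,y)$ by $\sigma_M(g,z,a_1,a_2)$ as in the proof of Lemma \ref{lemme estS3}, invoking Lemma \ref{lemme sumW} whose error term contributes $O(h(\log h)^{10}/\mathcal L(T)+h(\log h)^{12}/T)$ after summing over all remaining variables. Next I perform an Abel summation on $a_2$ based on Lemma \ref{lemme soma2}. Because $Q_2>hT$, the $\alpha_2$-integral can be completed to $[-1/2,1/2]$ with an acceptable error using the estimates of Lemma \ref{calculmajI}, exactly as in the treatment of the first case (Lemma \ref{lemme estS1}); after applying the circle-method identity in $\alpha_2$, one is left with an expression involving $|E_h(a_1u_1/(gzY))|^2$ integrated in $u_1$. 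The sum over $z\sim Z$ is then handled by Lemma \ref{lemme sum xi}, producing a factor of the form $C_2\phi_1(a_1g)\phi_{-1}(a_1g)/\tilde\phi(a_1g)$, and a further Abel summation rephrases the $z$-sum as a continuous average, putting us in exactly the situation reached at the very end of Lemma \ref{lemme estS6}.

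From that point the calculation proceeds as in the last part of the proof of Lemma \ref{lemme estS6}. The sum on $a_1$ is short (since $Q_1$ may be as small as $h/T$) and cannot be evaluated asymptotically; one bounds the double integral $\int\int\sum_{a_1}\phi_1(a_1)\phi_{-1}(a_1)/\tilde\phi(a_1)|E_h(a_1u_1u_3)|^2\, du_1du_3$ by $O(h)$, splitting according to whether $u_3\leq 1/(a_1u_1h)$ (use $|E_h|\leq h$) or $u_3>1/(a_1u_1h)$ (use $|E_h|\ll 1/|a_1u_1u_3|$). After relaxing the dyadic constraints on $(X,Y,Z)$ at the cost of an $O(h(\log h)(\log T)^2)$ term and truncating $a_1$ at $T^2$ at the cost of $O(h(\log h)^8/T)$, I swap the $a_1$- and $g$-summations and invoke the Möbius cancellation
\[
\sum_{\substack{g\leq T\\ (g,a_1)=1}}\frac{\mu(g)\phi_2(g)}{\phi(g)\tilde\phi(g)}=-\sum_{\substack{g>T\\ (g,a_1)=1}}\frac{\mu(g)\phi_2(g)}{\phi(g)\tilde\phi(g)}\ll\frac{\psi_{1/2}(a_1)^2}{\mathcal L(T)^2},
\]
which after summation in $a_1$ produces the decisive $h(\log h)^4/\mathcal L(T)$ contribution.

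The main obstacle is exactly the one already encountered in Lemma \ref{lemme estS6}: in the ``medium'' regime $h/T<Q_1\leq hT$ the $a_1$-sum is too short to be handled by Abel and Lemma \ref{lemme sum xi} with cancellation, so one must obtain the saving purely from the Möbius cancellation in $g$. The one mildly new point compared to case 6 is that now $gX>hT$ forces $X>h$ when $g\leq T$, so one must verify that the truncation of $a_1$ at $T^2$ and the crude $O(h)$ bound on the double integral remain valid in this wider $X$-range; this goes through unchanged because both steps only rely on the pointwise inequality $|E_h(\beta)|\ll\min(h,1/\|\beta\|)$ and on $a_1$-tail estimates that do not involve $X$.
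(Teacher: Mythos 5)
There is a genuine gap at the heart of your plan. You propose to treat the $(x,y)$-sum exactly as in the third case, i.e.\ to approximate it by $\sigma_M$ via Lemma~\ref{lemme sumW}, and only afterwards to sum over $a_2$. This fails in the seventh case precisely because of the defining constraint $gX>hT$. In the third case the complementary constraint $gX\leq hT$ guarantees both that $x<2X\leq 2hT/g\leq h^3$, so the $h^3$-smoothness of $x$ is automatic and Lemma~\ref{lemme sumW} (which counts \emph{all} squarefree $x$) applies, and that the number of admissible dyadic blocks $X$ is $O(\log h)$. In the seventh case $X$ is bounded above only by the smoothness condition, hence can be as large as $\prod_{p\leq h^3}p$; the paper flags exactly this in the proof of Lemma~\ref{lemme Sigma} (the number of possible values of $X$ is not $O(\log h)$ when $gX>hT$). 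For $X>h^3$ the main term of Lemma~\ref{lemme sumW} no longer describes the sum (the $h^3$-smooth squarefree $x$ in $[X,2X[$ are far rarer than $\tfrac{\zeta(2)}{\zeta(4)}XY\,w^*/\phi(z)$ predicts), and even formally, once $a_2$ is summed and the $\alpha_2$-integral completed, $\sigma_M$ contributes a quantity of order $gZ\,|E_h(a_1u_1/(gzY))|^2/(\phi(z)Y)$ \emph{per dyadic block}, independent of $X$, so the sum over the unboundedly many blocks blows up. Your closing remark that the only new point is the validity of the $a_1$-truncation misidentifies where the difficulty lies.

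The paper's proof takes the opposite order of summation. Since $Q_2/Z=gx>hT$, the variable $a_2$ runs over an interval much longer than the modulus $z$ of its congruence, so one first performs an elementary Abel summation over $a_2$ using the count $N(A_2)=\phi_2(g)\phi_1(x)A_2/z+O\bigl(2^{\omega(x)}3^{\omega(g)}\bigr)$ together with the circle-method identity, which collapses the triple product of $E_h$'s to $q_2\,|E_h(a_1/q_1)|^2/z$ up to an admissible error $O(h(\log h)^8/T)$. The factor $q_2=gxz$ then reduces the $x$-dependence to the pure multiplicative weight $\mu(x)^2/\phi(x)$, and the sum over all $h^3$-smooth squarefree $x$ with $gx>hT$ is evaluated as $q/\phi(q)-\log h+O(\log T)$ from the complete Euler product: no equidistribution of squarefree numbers in progressions is needed for the $x$-variable at all. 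Only after this does the argument rejoin the endgame you describe (summation over $y,z$ via Lemma~\ref{lemme sum xi}, restriction to $|a_1|\leq T^2$, interchange of the $a_1$- and $g$-sums and M\"obius cancellation in $g$, here with the weight $\mu(g)\phi_2(g)/(\phi(g)\phi_{-1}(g))$), which part of your proposal is essentially correct.
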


\begin{proof}
Nous nous proc\'edons comme dans le sixi\`eme cas sauf qu'ici la sommation longue correspond \`a la variable $a_2$ puisque $Q_2/Z>hT$. Nous nous pla\c cons dans le cas $\delta=(1,1)$, les autres \'etant semblables. Nous n'indiquons que les \'etapes sans d\'etailler tous les calculs. 

Quitte \`a n\'egliger un terme d'erreur de taille acceptable,  
nous pouvons nous placer dans le cas o\`u
$$g\leq T, \quad h/T<gYZ\leq hT , \quad gX> hT,\quad    Y\geq Z,\qquad Q_1/hT<A_1\leq TQ_1/h, \quad $$ 
l'encadrement $Q_2/hT<A_2\leq TQ_2/h$ \'etant inutile et l'in\'egalit\'e $X\geq Y$ \'etant une cons\'equence de celles ci-dessus.

Lorsque $(a_1,q_1)=1$, nous avons    \begin{align*} 
N( A_2)&:=\!\!\!\!\!\! \!\!\!\sum_{\substack{ a_2\leq A_2 \\  a_2\equiv -a_1xy^{-1}\bmod z\\ (a_2 ,gx)=(a_1x+a_2y,g )=1}} \!\!\!\!\!\! 1 = \sum_{\substack{k_2\mid gx\\ k_1\mid  g \\  (k_1,k_2)=1}}\mu(k_1)\mu (k_2)\!\!\!\!\!\!\!\!\!\sum_{\substack{ a_2\leq A_2/ k_2\\  k_2a_2\equiv -a_1xy^{-1}\bmod (k_1z)}} \!\!\!\!\!\! 1
\cr&= { \phi_2(g) \phi_1(x) } \frac{  A_2}{z}+O\big(2^{\omega(x)}3^{\omega(g)}\big) 
.
\end{align*}
Nous rappelons la quantit\'e
$V $  d\'efinie en~\eqref{def Vh}.
Une sommation d'Abel fournit  
\begin{equation}\label{est5}
    \begin{split} V(g,x,y,z;h) &= 
\sum_{\substack{1\leq |a_1|\leq TQ_1/h\\ (a_1,q_1)=1}} \sum_{\substack{ 1\leq |a_2|\leq q_2/2\\  a_2\equiv -a_1xy^{-1}\bmod z\\ (a_2 ,gx)=(a_1x+a_2y,g )=1}}E_h\Big(\frac{a_1}{q_1}\Big)E_h\Big(\frac{a_2}{q_2}\Big)E_h\Big(-\frac{a_1}{q_1}-\frac{a_2}{q_2}\Big)
\cr&=  \sum_{\substack{1\leq |a_1|\leq  TQ_1/h\\ (a_1,q_1)=1}}\Big\{ \frac{  \phi_2(g) \phi_1(x)}{  z}I_7(a_1,q_1,q_2;h)
  +O(R_7(a_1))
  \Big\},
\end{split}\end{equation}
avec
\begin{align*}
    I_7(a_1,q_1,q_2;h)&:= \sum_{ \varepsilon_2 \in \{\pm 1\} }\int_{0}^{q_2/2}E_h\Big(\frac{ a_1}{q_1}\Big)E_h\Big(\frac{\varepsilon_2\alpha_2}{q_2}\Big)E_h\Big(-\frac{ a_1}{q_1}-\frac{\varepsilon_2\alpha_2}{q_2}\Big)\d \alpha_2 
\cr&=q_2\int_{-1/2}^{1/2}E_h\Big(\frac{ a_1}{q_1}\Big)E_h( \alpha_2)E_h\Big(-\frac{ a_1}{q_1}-  \alpha_2 \Big)\d \alpha_2=q_2\Big|E_h\Big(\frac{a_1}{q_1}\Big)\Big|^2
\end{align*}
et $R_7(a_1)$ le terme d'erreur v\'erifiant \begin{align*}
  R_7(a_1)  \ll& \frac{2^{\omega(x)}3^{\omega(g)} }{q_2}\Big|E_h\Big(\frac{a_1}{q_1}\Big)\Big|\int_{-q_2/2}^{q_2/2}\Big| 
    E'_h\Big(\frac{\alpha_2}{q_2}\Big)E_h\Big(-\frac{a_1}{q_1}-\frac{\alpha_2}{q_2}\Big)-E_h\Big(\frac{\alpha_2}{q_2}\Big)E'_h\Big(-\frac{a_1}{q_1}-\frac{\alpha_2}{q_2}\Big)\Big| \d \alpha_2
    \cr&+{2^{\omega(x)}3^{\omega(g)} } \Big|E_h\Big(\frac{a_1}{q_1}\Big)\Big|\Big( \Big|E_h\Big(\frac{a_1}{q_1}\Big)\Big|+\Big|E_h\Big(\frac12+\frac{a_1}{q_1}\Big)\Big|\Big) .
\end{align*}
Nous utilisons \eqref{majprodetderivee} et le Lemme~\ref{calculmajI} de sorte  que le premier terme du majorant $R_{71}(a_1)$ v\'erifie
\begin{align*}
  R_{71}(a_1)  \ll&     \frac{{2^{\omega(x)}3^{\omega(g)} } h}{||a_1/q_1||+1/h}   \int_{-1/2}^{1/2} \frac{\d \alpha_2}{(||\alpha_2||+1/h)(||\alpha_2+a_1/q_1||+1/h)}\ll
    \frac{{2^{\omega(x)}3^{\omega(g)} }   h}{(||a_1/q_1||+1/h)^2} .
\end{align*}
 
Il vient
$$\sum_{\substack{1\leq |a_1|\leq q_1/2\\ (a_1,q_1)=1}}
    R_{7}(a_1)\ll   2^{\omega(x)}3^{\omega(g)}h^2{q_1 } (\log q_1) 
    \ll   2^{\omega(x)}3^{\omega(g)} (\log q_1) \frac{hq_1q_2}{zT } . 
    $$ 
    La contribution de ce terme d'erreur est clairement 
    $O\big(h(\log h)^8/T\big)$.

Il s'agit donc d'estimer la somme
$$ 
 \sum_{\substack{P(gxyz)\leq h^3\\ (x,y,z)\in D(\delta) \\
 g\leq T,  h/T<gYZ\leq hT \\ gX> hT}}\frac{\mu(g)\mu(gxyz)^2    \phi_2(g)  }{\phi(g)^2\phi_1(g)\phi( yz)^2\phi(x)}\sum_{\substack{1\leq |a_1|\leq  TQ_1/h\\ (a_1,q_1)=1}} \Big|E_h\Big(\frac{a_1}{q_1}\Big)\Big|^2 .
$$
Une sommation sur $x$ et $X$ fournit un terme 
$$ \Big( \frac{q}{\phi(q)}-\log h\Big)\!\!\!\!\!\!\!\!\!\!\!\!
 \sum_{\substack{g\leq T\\ y\sim Y, z\sim Z\\ h/T<gYZ\leq hT,\, Y\geq  Z }}\!\!\!\!\!\!\!\!\!\!\!\!\frac{\mu(g)\mu(g yz)^2    \phi_2(g)  }{ \phi(g)^2yz\phi( yz) }\sum_{\substack{1\leq |a_1|\leq  TQ_1/h\\ (a_1,gyz)=1}} \Big|E_h\Big(\frac{a_1}{q_1}\Big)\Big|^2
 +O\big( h(\log h)(\log T)^2\big).
$$
Puis nous intervertissons les sommes en $y$ et $z$ et celle de $a_1$ et sommons en $y$ et $z$. Un simple calcul fournit un terme principal
\begin{align*}  \frac{1}{\zeta(2)}&\Big( \frac{q}{\phi(q)}-\log h\Big)\!\!\!\!\!\!\!\!\!\!\!\!
 \sum_{\substack{g\leq T\\   h/T<gYZ\leq hT,\, Y\geq  Z }}\!\!\!\!\!\!\!\!\!
 \frac{\mu(g)     \phi_2(g)   }{g \phi(g)  \phi_{-1}(g)  }\sum_{\substack{1\leq |a_1|\leq  TQ_1/h\\ (a_1,g)=1}}  \frac{ \phi_1(a_1 ) }{  \phi_{-1}(a_1)  }
\int_{Y}^{2Y}  \int_{Z}^{2Z}  \Big|E_h\Big( \frac{a_1}{gt_2t_3} \Big)\Big|^2\frac{\d t_3\d t_2}{t_2^2t_3^2}\cr
& =\frac{1}{\zeta(2)}\Big( \frac{q}{\phi(q)}-\log h\Big)\!\!\!\!\!\!\!\!\!\!\!\!\!\!\!
 \sum_{\substack{g\leq T\\   h/T<gYZ\leq hT,\, Y\geq  Z }}\frac{\mu(g)     \phi_2(g)   }{ \phi(g)  \phi_{-1}( g)  }
 \cr&\qquad\qquad\qquad\qquad\qquad
 \sum_{\substack{1\leq |a_1|\leq  TQ_1/h\\ (a_1,g)=1}}  \frac{ \phi_1(a_1 ) }{  \phi_{-1}(a_1)  }
\int_{1/2}^{1}  \int_{1/2gYZ}^{1/gYZ}  |E_h (  {a_1u_2u_3} ) |^2 {\d u_3\d u_2} .\end{align*}
Des manipulations semblables \`a celles du sixi\`eme cas permettent de se restreindre \`a 
 $h/T<gYZ\leq hT,$ et $h^{1/2}\leq Y\leq h$, puis \`a $|a_1|\leq T^2.$

 Nous obtenons alors un terme principal \'egal \`a 
\begin{align*}& =\frac{1}{\zeta(2)}\Big( \frac{q}{\phi(q)}-\log h\Big)\!\!\!\!
 \sum_{\substack{g\leq T\\  h^{1/2}\leq Y\leq h }}\frac{\mu(g)     \phi_2(g)  }{ \phi(g)  \phi_{-1}(g)  }\sum_{\substack{1\leq |a_1|\leq  T^2\\ (a_1,g)=1}}  
 \frac{ \phi_1(a_1 ) }{  \phi_{-1}(a_1)  }
\int_{1/2}^{1}  \int_{1/2hT}^{T/h}  |E_h (  {a_1u_2u_3} ) |^2 {\d u_3\d u_2} 
\cr
& =\frac{1}{\zeta(2)}\Big( \frac{q}{\phi(q)}-\log h\Big)\!\!
 \sum_{\substack{1\leq |a_1|\leq  T^2\\  h^{1/2}\leq Y\leq h }}
 \frac{ \phi_1(a_1 ) }{  \phi_{-1}(a_1)  }\sum_{\substack{g\leq T \\ (g,a_1)=1}}  \frac{\mu(g)     \phi_2(g)  }{ \phi(g)  \phi_{-1}(g)  }
\int_{1/2}^{1}  \int_{1/2hT}^{T/h}  |E_h (  {a_1u_2u_3} ) |^2 {\d u_3\d u_2}. \end{align*}
Nous utilisons alors
$$\sum_{\substack{ 1\leq g\leq T \\ (a_1,g)=1}}\frac{\mu(g)  \phi_{2}(g)}{\phi(g)  \phi_{-1}( g)}=-\sum_{\substack{   g>T \\ (a_1,g)=1}}\frac{\mu(g)  \phi_{2}(g)}{\phi(g) \tilde\phi( g)} \ll \psi_{1/2}(a_1)^2{\cal L}(T)^{-2}.  $$
ce qui fournit une contribution 
$$\ll h (\log h)^2(\log T){\cal L}(T)^{-2}.$$
En rassemblant ces r\'esultats, nous obtenons bien
$$S_7(\delta)\ll h (\log h)^4  {\cal L} (T)^{-1}+ h (\log h) (\log T)^2.$$
\end{proof}
 
\subsection{Conclusion de la d\'emonstration du Th\'eor\`eme \ref{mainth}.}
Nous partons de la formule \eqref{estR3h}. Nous devons donc estimer $V_3(h)$. Le Lemme 
\ref{lemme estS1} coupl\'e avec le Lemme \ref{LemmeestC1}     permet de traiter le cas o\`u    $gYZ=Q_1>hT$
et $gX=Q_2/Z>hT.$  Le Lemme \ref{lemme estS2} permet de n\'egliger la contribution du  cas o\`u    $gYZ=Q_1\leq h/T$
et $gX=Q_2/Z>hT.$  Le Lemme \ref{lemme estS7} le compl\`ete en majorant  la contribution du  cas o\`u    $h/T<gYZ=Q_1\leq hT$
et $gX=Q_2/Z>hT.$ 

Nous pouvons donc consid\'erer le cas restant $gX\leq hT.$ Le Lemme \ref{lemme estS3} estime la contribution du cas $gYZ=Q_1>hT$, $X>ZT^4$ sachant que gr\^ace au Lemme \ref{lemme estS4} le cas $X\leq ZT^4$ est n\'egligeable. Le Lemme  \ref{lemme estS6} qui traite du cas  $h/T<gYZ\leq hT$, $gX\leq hT$ permet de conclure.

La somme des termes d'erreur apparaissant dans ces lemmes est major\'ee par
$$
\ll h (\log h) (\log_2 h)^2(\log T)^6+\frac{h (\log h)^{12}}{  {\cal L} (T)}+\frac{h(\log h)^{14} }{T} .$$
En choisissant $T=\exp\{  150(\log_2h)^2\}$, nous obtenons alors le   terme d'erreur annonc\'e au Th\'eor\`eme~\ref{mainth}.

\section{D\'emonstration du Th\'eor\`eme \ref{consth}} 
L'hypoth\`ese \ref{hypo} fournit lorsque $1\leq k\leq 3$ et $1\leq x\leq X$ l'estimation
\begin{equation}
    \label{hypo0}
\sum_{1\leq n\leq x} \prod_{j=1}^k \Lambda_0(n+d_j)= \mathfrak S_0(\mathcal D) x+O\big(E_3(X;h)\big). \end{equation}

    Nous avons  
$$M_3(X,h):=\frac1X\sum_{1\leq n\leq X} \Big(\sum_{1\leq d\leq h}\Lambda_0(n+d)\Big)^3.$$
En d\'eveloppant le cube, nous obtenons
\begin{align*}
    M_3(X,h)&=\sum_{\substack{ 1\leq d_1,d_2,d_3\leq h\\ d_j \rm{distincts}}}\frac1X \sum_{1\leq n\leq X} \Lambda_0(n+d_1)\Lambda_0(n+d_2)\Lambda_0(n+d_3)\cr&\quad+ 
    \sum_{\substack{ 1\leq d_1,d_2 \leq h\\ d_1\neq d_2}} \frac3X \sum_{1\leq n\leq X} \Lambda_0(n+d_1)^2\Lambda_0(n+d_2) +
  \sum_{\substack{ 1\leq d \leq h }}\frac1X  \sum_{1\leq n\leq X}  \Lambda_0(n+d)^3.
\end{align*}
Nous notons $M_{3,j}$ la $j$-i\`eme double somme apparaissant dans le d\'eveloppement de $ M_3(X,h)$. 
D'apr\`es l'hypoth\`ese \eqref{hypo} et le Th\'eor\`eme \ref{mainth},
\begin{align*}
    M_{3,1} 
&=   R_3(h)+O\big(h^3E_3(X,h)/X\big)
\cr&=  \frac 92  h(\log h)^2\Big(1+O\Big( \frac{(\log_2 h)^{14}}{\log h}\Big)\Big)+O\big(h^3E_3(X,h)/X\big).
\end{align*}
Nous suivons \cite{MS04} pour estimer $M_{3,2}$, puis $M_{3,3}$.
Nous observons que 
\begin{align*}
    \Lambda_0(n+d_1)^2&=
     \Lambda(n+d_1)\Lambda_0(n+d_1) -\Lambda_0(n+d_1)
     \cr&=\Lambda(n+d_1)(\log (n+d_1) -1)-\Lambda_0(n+d_1)+O\big( 
     \Lambda(n+d_1)1_{n+d_1\notin {\cal P}}\log X\big)
     \cr&=\Lambda_0(n+d_1)(\log (n+d_1) -2)+\log (n+d_1) -1+O\big(  
     \Lambda(n+d_1)1_{n+d_1\notin {\cal P}}\log X\big).
\end{align*}
La contribution \`a $M_{3,2} $ du dernier terme est $\ll h^2(\log X)^2/\sqrt{X} $. Comme $\mathfrak S_0(\{ d_2\})=0,$  une sommation d'Abel permet de montrer que la  contribution \`a $M_{3,2} $ du terme $\log (n+d_1) -1$ est
$\ll  h(\log X)E_3(X,h)/X.$ La  contribution \`a $M_{3,2} $ du premier terme
est $$= \frac{R_2(h)}X\int_0^X (\log x-2)\d x+O\big(   h^2(\log X)E_3(X,h)/X\big) .$$
D'apr\`es (16) de \cite{MS04}, lorsque $B$  est la constante d\'efinie dans l'\'enonc\'e, pour tout $\varepsilon>0 $, nous avons
\begin{equation}
\label{estR2}    
R_2(h)=-h(\log h-B-1)+O(h^{1/2+\varepsilon}).
\end{equation}
Il vient 
\begin{align*}
 M_{3,2}&
 = -\frac hX(\log h-B-1)\int_0^X (\log x-2)\d x+O\big(  h^2(\log X) E_3(X,h)/X +h^{1/2+\varepsilon} \log X \big) .\end{align*}

De m\^eme, nous observons que \begin{align*}
    \Lambda_0(n+d)^3
     &=
     \Lambda(n+d)(\Lambda_0(n+d)^2-\Lambda_0(n+d)) +\Lambda_0(n+d) 
     \cr&=\Lambda(n+d)(\log (n+d) -1)(\log (n+d)-2)+\Lambda_0(n+d)\cr&\quad+O\big( 
     \Lambda(n+d)1_{n+d\notin {\cal P}}(\log X)^2\big)
     \cr&=\Lambda_0(n+d)\big((\log (n+d) -1)(\log (n+d)-2)+1\big)\cr&\quad +(\log (n+d) -1)(\log (n+d)-2)+O\big( 
     \Lambda(n+d)1_{n+d\notin {\cal P}}(\log X)^2\big).
\end{align*}
La contribution \`a $M_{3,3} $ du dernier terme est $\ll h^2 (\log X)^2/\sqrt{X}$. Comme $\mathfrak S(\{ d \})=0,$ la contribution \`a $M_{3,3} $ du premier terme est $\ll h(\log X)^2E_3(X,h)/X$. 
Ainsi
\begin{align*}M_{3,3} 
&=\frac hX\int_0^X (\log x -1)(\log x -2 )\d x +O\big(  h(\log X)^2  E_3(X,h)/X +h^2(\log X)^2/X\big).
\end{align*}
Cela implique bien 
\begin{align*}
     M_3(X,h)= &\frac hX\int_0^X \big( \tfrac 92(\log h)^2-3(\log h-B)(\log x) +(\log x)^2\big)\d x  
\cr&+O\big(  h(\log h)(\log_2 h)^{14}+ h (h+ \log X)^2  E_3(X,h) /X+h^{1/2+\varepsilon} \log X \big)\cr
= &h \big( \tfrac 92(\log h)^2-3(\log h-B)(\log X) +(\log X)^2\big) 
\cr&+O\big(  h(\log h)(\log_2 h)^{14}+ h (h+ \log X)^2  E_3(X,h) /X+h^{1/2+\varepsilon} \log X \big).
\end{align*}

\section{D\'emonstration du Th\'eor\`eme \ref{conjMoment}} 

Nous suivons la m\'ethode de Montgomery et Soundararajan \cite{MS04} en nous concentrant sur les moments d'ordre impair $K=2K'+1$ avec $K'\geq 1.$ \`A chaque \'etape, nous reprenons leurs notations sauf que leur $H$ est not\'e $h$, et leur $h$ sera not\'e $i.$ Nous n'indiquons pas la d\'ependance en $K$ des estimations.

Suivant \cite{MS04}*{eq. (61)}, nous avons
\begin{equation}
    \label{devMK}
M_K(X,h)=\frac1X
\sum_{k=1}^K \sum_{\substack{M_1,\ldots, M_k\\ M_j\geq 1\\ \sum_j M_j=K}}\binom{K}{M_1,\ldots, M_k}\frac{1}{k!}
\sum_{\substack{d_1,\ldots,d_k\\ 1\leq d_j\leq h\\ d_j \rm{distincts}}}\sum_{1\leq n\leq X} \prod_{j=1}^k \Lambda_0(n+d_j)^{M_j}
,\end{equation}
avec
$$\binom{K}{M_1,\ldots, M_k}:=\frac{K!}{M_1!\cdots  M_k! }.$$

Lorsque $m\geq 1$, nous notons $\Lambda_m(n)=\Lambda(n)^{m}\Lambda_0(n)$. Ainsi lorsque $M\geq 1,$ nous avons
$$\Lambda_0(n)^M=\Lambda_0(n)(\Lambda(n)-1)^{M-1}
=\sum_{m=0}^{M-1} (-1)^{M-m-1}\binom{M-1}{m}\Lambda_m(n).
$$

En reportant dans \eqref{devMK}, nous obtenons
\begin{equation}\label{Mk=sumLk}\begin{split}
    M_K(X,h)=&
\sum_{k=1}^K\frac{1}{k!} \sum_{\substack{M_1,\ldots, M_k\\ M_j\geq 1\\ \sum_j M_j=K}}\binom{K}{M_1,\ldots, M_k}
\cr&\times \sum_{\substack{m_1,\ldots, m_k\\ 0\leq m_j\leq M_j-1}}
\prod_{j=1}^k(-1)^{M_j-m_j-1}\binom{M_j-1}{m_j}L_k(\bfm),
\end{split}
\end{equation}
o\`u 
$$L_k(\bfm):=\frac1X\sum_{\substack{d_1,\ldots,d_k\\ 1\leq d_j\leq h\\ d_j \rm{distincts}}}\sum_{1\leq n\leq X} \prod_{j=1}^k \Lambda_{m_j}(n+d_j).$$
Pour chaque $\bfm,$ nous introduisons quatre ensembles
$${\cal K}=\{ 1,\ldots, k\}, \quad  {\cal H}=\{ j\in {\cal K}:m_j\geq 1\}, \quad  {\cal I}=\{ i\in {\cal K}:m_i=0\}, \quad  {\cal J}\subset {\cal K} $$ et trois cardinaux associ\'es
$i={\cal H},$ $k-i={\cal I},$ $j={\cal J}.$
Ainsi
\begin{align*}
    \prod_{j\in {\cal I}} \Lambda_{0}(n+d_j) \prod_{j\in {\cal H}} \Lambda(n+d_j)&=
     \prod_{i\in {\cal I}} \Lambda_{0}(n+d_i)
    \prod_{j\in {\cal H}} \big(\Lambda_{0}(n+d_j)+1\big)=
    \sum_{\substack{{\cal J}\\ {\cal I}\subset {\cal J}\subset {\cal K}}}\prod_{j\in {\cal J}} \Lambda_{0}(n+d_j).
\end{align*}
L'hypoth\`ese de Hardy et Littlewood fournit lorsque $1\leq x\leq X$
$$ \sum_{n\leq x}
    \prod_{j\in {\cal I}} \Lambda_{0}(n+d_j) \prod_{j\in {\cal H}} \big(\Lambda_{0}(n+d_j)+1\big) 
    =x\sum_{\substack{{\cal J}\\ {\cal I}\subset {\cal J}\subset {\cal K}}}\mathfrak S_0({\cal D}_{\cal J})
    +O( E_K(X,h))
    $$
avec ${\cal D}_{\cal J}:=\{ d_j \quad (j\in  {\cal J})\}.$
Lorsque $\Lambda(n+d_j)$ est non nul avec $j\in {\cal H}$ (c'est-\`a-dire $m_j\geq 1$), soit $n+d_j$ est premier et alors le facteur 
$\Lambda(n+d_j)^{m_j-1}\Lambda_0(n+d_j)=
\log(n+d_j)^{m_j-1}(\log(n+d_j)-1)$, soit $n+d_j$ est une puisssance sup\'erieure \`a $2$ de nombre premier et alors la contribution de ce terme est n\'egligeable puisque $E_K(X,h)\geq \sqrt{X} $. Nous avons 
$$\sum_{j=1}^km_j\leq  \sum_{j=1}^k(M_j-1)\leq K-k.$$
En faisant une int\'egration par parties et en approchant $\log (n+d_j) $
par $\log n$, Montgomery et Soundararajan obtiennent 
\begin{align*}\frac1X\sum_{1\leq n\leq X} \prod_{j=1}^k \Lambda_{m_j}(n+d_j)
&=\sum_{\substack{{\cal J}\\ {\cal I}\subset {\cal J}\subset {\cal K}}}\mathfrak S_0({\cal D}_{\cal J})\big( I(X,\bfm)+O\big((\log X)^{K-k}(h +  E_K(X,h))/X\big),\end{align*}
o\`u 
$$I(X,\bfm):=\frac1X\int_0^X\prod_{j\in {\cal H}}\big( (\log x)^{m_j-1}(\log x-1)\big)\d x. $$
Suivant \cite{MS04}, nous obtenons gr\^ace \`a $h\leq X^{1/k}$
\begin{align*}&L_k(\bfm) =I(X,\bfm)
\sum_{\substack{{\cal J}\\ {\cal I}\subset {\cal J}\subset {\cal K}}} R_j(h)h^{k-j}(1+O(1/h))+O\big( h^k(\log X)^{K-k}E_K(X,h)/X\big)\cr& =I(X,\bfm)h^{i-1}\big( h R_{k-i}(h)(1+O(1/h))+iR_{k-i+1}(h)\big)+O\big( h^k(\log X)^{K-k}E_K(X,h)/X\big).  
\end{align*}
Le facteur $i$ devant $R_{k-i+1}(h)$ est le nombre de mani\`ere de choisir ${\cal J}$ contenant ${\cal I}$ avec un \'element de ${\cal H}$ en plus.

{Nous supposons maintenant $h\geq \log X$.}
Nos hypoth\`eses fournissent pour tout $k\geq 1$
$$R_k(h)\ll h^{\lfloor k/2\rfloor}(\log h)^{\lceil k/2\rceil}.$$
Puisque $\lfloor (k-i)/2\rfloor+ \lceil (k+i)/2\rceil=k$, nous avons donc 
\begin{align*} 
L_k(\bfm)&\ll   (\log X)^{K-k}h^{\lfloor (k-i)/2\rfloor}(\log h)^{\lceil (k+i)/2\rceil} +h^{K}E_K(X,h)/X\cr 
&\ll   (\log X)^{K }\Big(\frac{h}{\log X}\Big)^{\lfloor (k+i)/2\rfloor}\Big(\frac{\log h}{\log X}\Big)^{\lceil (k-i)/2\rceil} +h^{K}E_K(X,h)/X
\cr 
&\ll   (\log X)^{K }\Big(\frac{h}{\log X}\Big)^{\lfloor (k+i)/2\rfloor} +h^{K}E_K(X,h)/X.
\end{align*}

Comme dans \cite{MS04}, nous remarquons que 
$k+i\leq k-i+\sum_{j}( M_j-1)\leq K.$
Il est ais\'e de n\'egliger la contribution des $i$ tels que $k+i<K-1 $. Le cas $k+i= K $ correspond au cas $M_j=2$ pour tout $j\in {\cal H}$ tandis que le cas $k+i= K-1 $ correspond au cas $M_j=2$ pour tout $j\in {\cal H}$ sauf pour une valeur o\`u $M_j=3.$


Nous nous pla\c cons dans le cas o\`u $k+i=K $, $M_j-1=m_j=1$ pour tout $j\in {\cal H}.$ Ainsi nous avons $2i=2(K-k)\leq K$ autrement dit $\tfrac 12 K\leq k\leq K.$ Dans ce cas, nous avons aussi
$$I(X,\bfm) =\frac1X\int_0^X (\log x-1)^{K-k}\d x=:I_{K-k}(X)= (\log X)^{K-k}\big(1+O(1/\log X)\big).$$
Notant $k'=k-K'-1$, nous obtenons
\begin{align*} L_k(\bfm) &  =I_{K-k}(X)h^{K-k-1}\big( h R_{2k-K}(h)(1+O(1/h))+(K-k)R_{2k-K+1}(h)\big)\cr&\quad +O\big( h^k(\log X)^{K-k}E_K(X,h)\big)\cr
&   =(-1)^{k'+1}Xh^{K'}(\log X)^{K'-k'}(\log h)^{k'+1}\big( r_{2k'+1}   +(K'-k')\mu_{2k'+2}+o(1) \big)\cr&\quad +O\big( h^k(\log X)^{K-k}E_K(X,h)\big).\end{align*}

Nous nous pla\c cons dans le cas o\`u $k+i=K-1=2K' $, $M_j-1=m_j=1$ pour tout $j\in {\cal H} $ sauf peut \^etre un exception o\`u $M_j=3.$
Nous   avons $2i+1=2(K-k-1)+1\leq K$ et $k+2\leq K$ autrement dit $\tfrac 12 (K-1)\leq k\leq K-2.$ Nous pouvons donc choisir $k=K'+k'$ avec $0\leq k'\leq K'-1.$ Notons que cela implique $K'\geq 1.$

Dans ce cas-l\`a, nous avons encore 
$ I(X,\bfm)  \sim  (\log X)^{K-k}$   
et \begin{align*} L_k(\bfm)   &=I(X,\bfm)h^{i}  R_{k-i}(h)(1+O((\log h)/h)) +O\big( h^k(\log X)^{K-k}E_K(X,h)/X\big)
\cr&=(-1)^{k'}\mu_{2k'} (\log X)^{K'+1-k'}h^{K'}(\log h)^{k'}(1+o(1))
+O\big( h^k(\log X)^{K-k}E_K(X,h)/X\big).
\end{align*}

En reportant dans \eqref{Mk=sumLk}, nous obtenons 
\begin{equation} \begin{split}
     &\frac{M_K   (X,h)}{ h^{K'}} \cr  = &  - (\log h)
\sum_{0\leq k'\leq K'} (\log X)^{K'-k'}(-\log h)^{k' }\frac{(2K'+1)!}{(K'+1+k')!2^{K'-k'}}  \binom{K'+1+k'}{K'-k'}\cr&\qquad
\times \big( r_{2k'+1}   + (K'-k')\mu_{2k'+2}+o(1) \big)\cr
& +  (\log X)
\sum_{0\leq k'\leq K'-1} (\log X)^{K'-k'}(-\log h)^{k' }\frac{(2K'+1)!(\mu_{2k' }+o(1))}{3(K'+k')!2^{K'-k'}}  \binom{K'+ k'}{K'-k'} (K'-k')\cr&
+  O\big(  h \big\{ h^{K'}+(\log X)^{K'}\big\}E_K(X,h)/X\big).
\end{split}
\end{equation}
Ici le coefficient 
$  \binom{k}{K -k}=\binom{K'+1+k'}{K' -k'} $ 
permet de compter le nombre de sous-ensembles ${\cal I}$ de 
$\{1,\ldots,k\}$ de cardinal $k-i=2k-K=k-(K-k) $ tandis que 
le coefficient 
$  \binom{k}{K-1 -k}=\binom{K' +k'}{K' -k'} $ 
permet de compter le nombre de sous-ensembles ${\cal I}$ de 
$\{1,\ldots,k\}$ de cardinal $k-i=2k+1-K=k-(K-1 -k) $.
Dans le second cas, il faut choisir l'\'el\'ement $j\in{\cal H}$ tel que $M_j=3$. Il y en a $i=K-k-1=K'-k'$ mani\`eres possibles.

Nous observons facilement les formules
$$
\frac{(2K'+1)!}{(K'+1+k')!2^{K'-k'}}  \binom{K'+1+k'}{K'-k'}  \big( r_{2k'+1}   + (K'-k')\mu_{2k'+2}  \big)=\mu_{2K'+2}(K'+\tfrac12 k')\binom{K' }{ k'} 
$$
et
$$ \frac{\mu_{2k' }}{(K'+k')!} \binom{K}{M_1,\ldots, M_k} \binom{K'+ k'}{K'+1-k'}=\frac{(2K'+1)!\mu_{2k' }}{3(K'+k')!2^{K'-k'}}  \binom{K'+ k'}{K'+1-k'}=\tfrac 13\mu_{2K'+2} \binom{K' }{ k'} .$$
De plus 
$$\sum_{0\leq k'\leq K'}(K'+\tfrac12 k')\binom{K' }{ k'} 
(\log X)^{K'-k'}(-\log h)^{k' }=K'\big(\log (X/h)\big)^{K'-1}\log (X/h^{3/2})
$$
et 
$$\sum_{0\leq k'\leq K'-1}(K'-k')\binom{K' }{ k'} 
(\log X)^{K'-k'}(-\log h)^{k' }
=K'\big(\log (X/h)\big)^{K'-1}(\log X).$$
Nous en d\'eduisons 
\begin{equation}\label{MK} \begin{split}
     &\frac{M_K (X,h)}{ h^{K'}} 
\cr
&\qquad = (\tfrac 13+o(1))\mu_{2K'+2}K'  
 \big(\log (X/h)\big)^{K'-1 } \big( \tfrac 92(\log h)^2-3(\log h)(\log X) +(\log X)^2\big)
\cr&\qquad\quad+  O\big(  h \big\{ h^{K'}+(\log X)^{K'}\big\}E_K(X,h)/X\big).\end{split}
\end{equation}

\section{Remerciements}
L'auteur a plaisir \`a remercier chaleureusement Vivian Kuperberg pour les discussions sur le sujet qu'il a eues avec elle, Daniel Fiorilli et Florent Jouve pour leurs remarques et encouragements, G\'erald Tenenbaum pour sa relecture. L'auteur remercie l'Institut Mittag Leffler pour les excellentes conditions de travail lors de la fin de l'\'elaboration de ce travail et aussi les organisateurs pour leur invitation. Pendant cette p\'eriode, l'auteur a \'et\'e soutenu par une chaire senior de l'Institut Universitaire de France.

\noindent Institut de Math\'ematiques de Jussieu-Paris Rive Gauche\\
Universit\'e   Paris Cit\'e, Sorbonne Universit\'e, CNRS UMR 7586\\
Case Postale 7012\\
F-75251 Paris CEDEX 13, France\\
{regis.delabreteche@imj-prg.fr}

\end{document}